\declaretheorem{theorem}
\declaretheorem{lemma}
\declaretheorem{corollary}
\declaretheorem{proposition}
\declaretheorem{definition}
\declaretheorem{remark}
\declaretheorem[name=Acknowledgements,numbered=no]{ack}
\newcommand{\M}{\mathcal{M}}
\newcommand{\R}{\mathbb{R}}
\newcommand{\Z}{\mathbb{Z}}
\newcommand{\N}{\mathbb{N}}
\newcommand{\E}{\mathbb{E}}
\renewcommand{\P}{\mathbb{P}}
\def\c{{\bf C}}
\def\e{\epsilon}
\def\phi{\varphi}
\def\R{{\mathbb R}}
\def\N{{\mathbb N}}
\def\Z{{\mathbb Z}}
\def\E{{\mathbb E}}
\def\c{{\mathcal C}}
\def\P{{\mathbb P}}
\def\p{{\mathcal P}}
\def\Q{{\mathcal Q}}
\def\M{{\mathcal M}}
\def\X{{\mathcal X}}
\def\diam{\mbox{\rm diam} }
\def\le{\leqslant}
\def\ge{\geqslant}
\def\M{\mathcal{M}}
\begin{document}

\title{Rate distortion theory, metric mean dimension and measure theoretic entropy}
\date{\today}

\author[A. Velozo]{Anibal Velozo}  \address{Princeton University, Princeton NJ 08544-1000, USA.}
\email{avelozo@math.princeton.edu}

\author[R. Velozo]{Renato Velozo} \address{Facultad de Matem\'aticas,
Pontificia Universidad Cat\'olica de Chile (PUC), Avenida Vicu\~na Mackenna 4860, Santiago, Chile}
\email{ravelozo@mat.uc.cl}

\begin{abstract} We prove a variational principle for the metric mean dimension analog to the one in \cite{LT}. Instead of using the rate distortion function we use the function $h_\mu(\e,T,\delta)$ that is closely related to the entropy $h_\mu(T)$ of $\mu$. Our formulation has the advantage of being, in the authors opinion, more natural  when doing computations.  As a corollary we obtain a proof of the standard variational principle. We also obtain some relations between the rate distortion function with our function $\widetilde{h}_\mu(\e,T,\delta)$, a modification of $h_\mu(\e,T,\delta)$ when replacing the dynamical metrics with the average dynamical metrics. Using our methods we also reprove the main result in \cite{LT}.  We will explain how to construct homeomorphisms on closed manifolds with maximal metric mean dimension. We end this paper with some questions that naturally arise  from this work. 
\end{abstract}

\maketitle

\section{Introduction}
The \emph{topological entropy} is a fundamental quantity that allows us to quantify the chaoticity of a dynamical system. If the ambient space is compact and the dynamics is Lipschitz, then the topological entropy is finite. On the other hand, if the dynamics is just continuous, the topological entropy might be infinite. In fact, K. Yano proved in \cite{yan} that on a closed manifold of dimension at least two the topological entropy is infinite for generic homeomorphisms. It is then natural to consider a dynamical quantity that distinguishes systems with infinite topological entropy. The \emph{mean dimension} is a meaningful quantity when the topological entropy is infinite. This invariant was introduced by Gromov in \cite{gro}, and further studied by E. Lindenstrauss and B. Weiss in \cite{LW}. This invariant has found many applications to embedding problems, in other words to the problem of when a dynamical system can be embedded into another or not, see for instance \cite{l},\cite{glt} and references therein.  In this paper we will mainly focus on the \emph{metric mean dimension}. This is an invariant of the dynamical system defined in \cite{LW}, which in contrast with the topological entropy  might depend on the metric on the ambient space. For completeness we proceed to define the relevant quantities. Let $(\X,d)$ be a compact metric space and $T:\X\to \X$ a continuous map. Define $N_d(n,\e)$ as the maximal cardinality of a $(n,\e)$-separated subset of $\X$ and $$S(\X,d,\e)=\limsup_{n\to \infty}\dfrac{1}{n}\log N_d(n,\e).$$ For precise definitions see Section \ref{prelim}. The topological entropy is defined as $$h_{top}(\X,T)=\lim_{\e\to 0} S(\mathcal{X},d,\e).$$
The topological entropy is known to be independient of the compatible metric $d$ since $\X$ is compact. The upper metric mean dimension is defined as
 $$\overline{mdim}(\X,d,T)=\limsup_{\e\to 0}\dfrac{S(\mathcal{X},d,\e)}{|\log\e|}.$$

Recently E. Lindenstrauss and M. Tsukamoto in \cite{LT} established a variational principle for the metric mean dimension. In this formula the role of the measure theoretic entropy is replaced by the rate distortion function $R_\mu(\e)$ (for its definition see Section \ref{prelim}). Let $\M_T(\X)$ be the space of $T$-invariant probability measures on $\X$. They proved the following result.

\begin{theorem} \label{teolin}Suppose $(\X,d)$ satisfy Condition 1.2 in \cite{LT}. Then 
 $$\overline{mdim}(\X,d,T)=\limsup_{\e\to 0}\dfrac{\sup_{\mu\in\M_T(\X)} R_\mu(\e)}{|\log\e|}.$$
\end{theorem}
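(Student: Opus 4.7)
The plan is to derive Theorem \ref{teolin} as a corollary of the variational principle announced in the abstract, which is stated in terms of the function $h_\mu(\e,T,\delta)$ rather than the rate distortion function. Thus I would first establish
$$\overline{mdim}(\X,d,T)=\limsup_{\e \to 0}\dfrac{\sup_{\mu\in \M_T(\X)}h_\mu(\e,T,\delta(\e))}{|\log \e|}$$
for an appropriately slow choice $\delta(\e)\to 0$, and then transfer this to the $R_\mu$ formulation by a quantitative comparison between $h_\mu$ (or its averaged variant $\widetilde{h}_\mu$) and $R_\mu$.

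For the first step, the upper bound $\overline{mdim}\le \limsup \sup_\mu h_\mu/|\log\e|$ should follow by showing that for any $\mu\in\M_T(\X)$ one has $h_\mu(\e,T,\delta) \le \frac{1}{n}\log N_d(n,\e-\delta)+o_n(1)$, which is a Kolmogorov--Sinai-type estimate applied at a fixed scale: partition $\X$ into sets of diameter at most $\e-\delta$ and count their dynamical refinements against Bowen balls. The lower bound proceeds by the standard Misiurewicz construction: take $\nu_n$ uniform on a near-maximal $(n,\e)$-separated set $E_n$, form $\mu_n=\frac{1}{n}\sum_{i=0}^{n-1} T^i_\ast \nu_n$, extract a weak-$\ast$ subsequential limit $\mu \in \M_T(\X)$, and then show, using upper semicontinuity of a partition entropy at scale $\e$, that $h_\mu(\e,T,\delta) \ge \frac{1}{n}\log N_d(n,\e) - o_n(1)$. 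Condition 1.2 of \cite{LT} is what gives the uniform control needed to make the $\delta\to 0$ error negligible compared to $|\log \e|$.

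For the second step, I would compare the three quantities $h_\mu(\e,T,\delta)$, $\widetilde{h}_\mu(\e,T,\delta)$, and $R_\mu(\e)$. The averaged variant $\widetilde{h}_\mu$ sits between the other two because it uses $L^1$ dynamical metrics, matching the distortion criterion that defines $R_\mu$. A direct coding argument (encode the orbit by the index of the closest representative in an optimal cover) yields $R_\mu(\e) \le \widetilde h_\mu(\e,T,\delta) + C(\delta)$; the converse inequality is obtained by a Fano-type bound linking the mutual information in the rate distortion problem to the logarithm of the number of distortion balls required to cover the orbit distribution. Up to an additive $o(|\log \e|)$, these inequalities reduce to one another after normalization and taking $\limsup_{\e\to 0}$.

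The main obstacle is the lower bound in the $h_\mu$-variational principle: producing, at a \emph{fixed} scale $\e$, an invariant measure whose $h_\mu(\e,T,\delta)$ captures the full separated-set growth $S(\X,d,\e)$. Unlike the classical variational principle, one cannot send $\e\to 0$ before extracting the measure, so the usual upper semicontinuity of $\mu\mapsto h_\mu$ is unavailable; instead one must argue at the partition level, which forces the auxiliary parameter $\delta$ to be chosen with both the coding argument of step two and the measure-theoretic argument of step one in mind. Managing this two-sided constraint on $\delta(\e)$, and checking that all error terms are genuinely $o(|\log \e|)$, is the most delicate part of the plan.
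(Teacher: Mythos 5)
Your overall architecture — a Misiurewicz-type construction for the lower bound plus a coding / Fano-type comparison with the rate distortion function — is the right shape, but as written the plan has a real gap in the lower-bound direction, and the role of Condition~1.2 is misattributed.

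The gap is the interface between $h_\mu$ and $R_\mu$. You propose to first prove a variational principle at scale $\e$ for the uniform-metric quantity $h_\mu(\e,T,\delta)$, and then ``transfer'' this to $R_\mu$ via the chain $R_\mu \lesssim \widetilde h_\mu \le h_\mu$. That chain of inequalities is real: since $\widetilde d_n \le d_n$, every $(n,\e)$-dynamical ball sits inside the corresponding $(n,\e)$-average dynamical ball, hence $\widetilde h_\mu(\e,T,\delta)\le h_\mu(\e,T,\delta)$; and a coding argument gives $R_\mu(\e)\lesssim \widetilde h_\mu(\e,T,\cdot)$ (Proposition~\ref{prop1}). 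But this only gives $\sup_\mu R_\mu(\e) \lesssim \sup_\mu h_\mu(\e,T,\delta)$, i.e.\ the inequality $\limsup_{\e\to 0}\sup_\mu R_\mu(\e)/|\log\e| \le \overline{mdim}$. For the reverse inequality you would need a measure whose $R_\mu(\e)$ is large, and a measure with large $h_\mu(\e,T,\delta)$ does \emph{not} automatically have large $\widetilde h_\mu$ or large $R_\mu$ — the comparison between Bowen balls and $L^1$-average balls is genuinely one-sided at the measure level. The paper's way around this is to redo the Misiurewicz construction from scratch with the averaged metrics $\widetilde d_n$ (Lemma~\ref{2222}), which directly produces a measure with $\widetilde h_\mu(\e,T,\delta)\gtrsim \widetilde S(\X,d,\e)$, and then to use the two-sided bounds $\widetilde h_\mu(4L\e,T,1/L)\le R_\mu(\e)\le \widetilde h_\mu(\e,T,\e/2D)$ (Theorem~\ref{teo1}, plus Remark~\ref{remnoerg} for the non-ergodic case). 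Your plan, by going through $h_\mu$ first, cannot get the lower bound without an additional argument you have not supplied.

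Two secondary points. First, the role of Condition~1.2 in the paper is \emph{not} to control the $\delta\to 0$ error (Theorem~\ref{theo1}, which involves $\lim_{\delta\to 0}\limsup_{\e\to 0}$, needs no hypothesis on the metric); it is to guarantee $\limsup_{\e\to 0}\widetilde S(\X,d,\e)/|\log\e| = \overline{mdim}(\X,d,T)$ (Remark~\ref{remfin}), i.e.\ that the averaged and non-averaged separated growth rates agree after normalization — the step that lets you pass from the $\widetilde S$-variational principle to $\overline{mdim}$. Second, your proposed upper-bound estimate ``$h_\mu(\e,T,\delta)\le \frac{1}{n}\log N_d(n,\e-\delta)+o_n(1)$'' conflates the measure threshold $\delta\in(0,1)$ with a distance and is not meaningful; the correct (and trivial) upper bound is simply $h_\mu(\e,T,\delta)\le S(\X,d,\e)$, since any covering of a subset by $(n,\e)$-balls is dominated by a covering of $\X$.
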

For the meaning of Condition 1.2 see Definition \ref{cond}. In this paper we will prove an analog formula where instead of the rate distortion function we use some terms that appear in Katok's entropy formula. This has the advantage to simplify some computations and to make them, in the authors opinion, more intuitive. For $\mu\in \M_T(\X)$ define $N_\mu(n,\e,\delta)$ as the minimum number of $(n,\e)$-dynamical balls needed to cover a set of measure strictly bigger than $1-\delta$. Then define $h_\mu(\e,T,\delta)=\limsup_{n\to\infty} \frac{1}{n}\log N_\mu(n,\e,\delta)$. It was proven by A. Katok in \cite{ka} that $h_\mu(T)=\lim_{\e\to 0} h_\mu(\e,T,\delta)$ for every ergodic measure $\mu$ and any $\delta\in (0,1)$, where $h_\mu(T)$ is the measure theoretic entropy of $\mu$. One of the main results of this paper is the following theorem. 

\begin{theorem}\label{theo1} Let $(\X,d)$ be a compact metric space and $T:\X\to\X$ continuous. Then 
 $$\overline{mdim}(\X,d,T)=\lim_{\delta\to 0}\limsup_{\e\to 0}\dfrac{\sup_{\mu\in\M_T(\X)}h_\mu(\e,T,\delta)}{|\log\e|}.$$
\end{theorem}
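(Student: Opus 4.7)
The inequality ``$\le$'' is straightforward: for any $\mu\in\M_T(\X)$, if $E_n$ is a maximum $(n,\e)$-separated set then by maximality the $(n,\e)$-balls at its points cover $\X$, so $N_\mu(n,\e,\delta)\le|E_n|=N_d(n,\e)$; hence $h_\mu(\e,T,\delta)\le S(\X,d,\e)$, and dividing by $|\log\e|$ and letting first $\e\to 0$ and then $\delta\to 0$ yields the desired bound.

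The reverse inequality uses a Misiurewicz-type construction together with a coding argument. For each small $\e>0$ I would pick a subsequence $n_k\to\infty$ realizing $\limsup_n n^{-1}\log N_d(n,\e)=S(\X,d,\e)$, take maximum $(n_k,\e)$-separated sets $E_{n_k}$, form $\sigma_{n_k}=|E_{n_k}|^{-1}\sum_{x\in E_{n_k}}\delta_x$ and $\mu_{n_k}=n_k^{-1}\sum_{j=0}^{n_k-1}T^j_*\sigma_{n_k}$, and pass to a further subsequence so $\mu_{n_k}\to\mu^\e\in\M_T(\X)$ weakly. The heart of the proof is the key estimate
\[
h_{\mu^\e}(\e/3,T,\delta)\ \ge\ \frac{S(\X,d,\e)-\sqrt{\delta}\,S(\X,d,\e/3)}{1-\sqrt{\delta}}.
\]
To prove it, fix $m\ge 1$ and a realizing cover $B_1,\dots,B_N$, $N=N_{\mu^\e}(m,\e/3,\delta)$, of $\mu^\e$-mass $>1-\delta$ by open $(m,\e/3)$-balls. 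Weak-$*$ convergence gives $\mu_{n_k}(\bigcup_i B_i)>1-\delta-\eta$ for $k$ large, and a pigeonhole over the $m$ possible offsets $s\in[0,m)$ followed by Markov's inequality produces an offset $s^*$ and a subset $G\subseteq E_{n_k}$ of proportion $\ge 1-\sqrt{\delta+\eta}$ on which, writing $L=\lfloor(n_k-s^*)/m\rfloor$, the number of ``bad'' blocks $j\in\{0,\dots,L-1\}$ (those with $T^{s^*+jm}x\notin\bigcup_i B_i$) is at most $\sqrt{\delta+\eta}\,L$. To each $x\in G$ I assign a code that records, for each block, either the index of a $B_i$ containing $T^{s^*+jm}x$ (out of $N$ choices) or, in the bad case, the index of a ball from a fixed minimum $(m,\e/3)$-cover of $\X$ (out of $C:=C_d(m,\e/3)$ choices); the at most $2m$ iterates in $[0,s^*)\cup[s^*+Lm,n_k)$ are coded by at most $C^2$ extra symbols. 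Two distinct points of $G$ cannot share a code, since then the triangle inequality would give $d(T^kx,T^kx')<2\e/3<\e$ on every $k\in[0,n_k)$, contradicting $(n_k,\e)$-separation. Counting codes, taking logs, dividing by $n_k$, then letting $n_k\to\infty$ and $\eta\to 0$ gives, for each fixed $m$, an inequality that rearranges to $m^{-1}\log N\ge[S(\X,d,\e)-H(\sqrt{\delta})/m-\sqrt{\delta}\,m^{-1}\log C_d(m,\e/3)]/(1-\sqrt{\delta})$; taking $\limsup_m$ and using that $m^{-1}\log C_d(m,\e/3)$ converges (by subadditivity of $\log C_d(\cdot,\e/3)$) to a limit $\le S(\X,d,\e/3)$ yields the displayed estimate.

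To finish, divide the key estimate by $|\log(\e/3)|$, take $\limsup_{\e\to 0}$ along a sequence realizing $\overline{mdim}$, use $|\log(\e/3)|/|\log\e|\to 1$ and $\limsup_{\e\to 0}S(\X,d,\e/3)/|\log\e|\le\overline{mdim}$, and let $\delta\to 0$. The main technical difficulty I anticipate is the coding step: it is crucial to code the bad blocks at the same $(m,\e/3)$-scale (producing an error proportional to $S(\X,d,\e/3)$), rather than with a single-iterate cover of $\X$ at scale $\e/3$, since the latter would introduce the upper box dimension of $(\X,d)$ and break the argument for spaces of infinite box dimension.
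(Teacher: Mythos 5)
Your proposal is correct and reaches Theorem \ref{theo1} by a genuinely different route for the hard inequality, although both arguments start from the same Misiurewicz-type empirical-measure construction and pass to a weak-$*$ limit $\mu$. The paper (Lemma \ref{1111}) then picks a single favorable column index $m_k$ at which a positive proportion of the $(n_k,\e)$-separated points land in the realizing set $K$, pigeonholes those points into a cover by $(m_k,\e/2)$-balls, and uses the implication $d_{n_k}(x_i,x_j)>\e$ and $d_{m_k}(x_i,x_j)\le\e$ $\Rightarrow$ $d_{n_k-m_k}(T^{m_k}x_i,T^{m_k}x_j)>\e$ to produce a large separated subset of $K$, giving $h_\mu(\e/2,T,\delta)\ge S(\e)-3\delta\,S(\e/2)$. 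You instead decompose $[0,n_k)$ into full blocks of length $m$, use an offset pigeonhole and a Markov bound to control the fraction of bad blocks, and code each block by an $(m,\e/3)$-ball index so that two $(n_k,\e)$-separated points must receive distinct codes; this is Misiurewicz's coding scheme with the finite partition replaced by $\e$-scale codes, and yields the error term $\bigl(S(\e)-\sqrt{\delta}\,S(\e/3)\bigr)/(1-\sqrt{\delta})$ in place of the paper's $S(\e)-3\delta\,S(\e/2)$; both vanish in the final double limit. Your closing remark, that the bad blocks must be coded at the dynamical scale $(m,\e/3)$ rather than the static scale $\e/3$, is exactly the point that keeps the argument valid for spaces of infinite box dimension, where a per-iterate cover would introduce $\log\#(\X,d,\e/3)$ and break the bound. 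One tiny imprecision: you invoke subadditivity of the logarithm of the minimal $(m,\e/3)$-ball covering number to get convergence of $m^{-1}\log C_d(m,\e/3)$, but ball covers of radius $\e$ are not directly submultiplicative (one needs covers by sets of $d_m$-diameter $\le\e$); this does not matter, however, since that term enters your inequality with a negative sign, so the bound $\limsup_m m^{-1}\log C_d(m,\e/3)\le S(\X,d,\e/3)$, which follows from $C_d(m,\e/3)\le N_d(m,\e/3)$, already suffices.
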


From the proof of Theorem \ref{theo1} we also recover a proof of the standard variational principle. An easy application of Theorem \ref{theo1} is the computation $$mdim(([0,1]^n)^\Z,d_T,T)=n,$$ where the transformation $T$ is the shift map and $d_T(x,y)=\sum_{k\in\Z} \frac{1}{2^{|k|}}d(x_k,y_k)$, where $x=(...,x_{-1},x_0,x_1...)$, $y=(...,y_{-1},y_0,y_1...)$ and $d$ is the standard metric on $[0,1]^n$. We also investigate the connection between the rate distortion function $R_\mu(\e)$ and our replacement $h_\mu(\e,T,\delta)$. For reasons that will be clear to the reader the function $R_\mu(\e)$ is closely related to $\widetilde{h}_\mu(\e,T,\delta)$, where instead of using $(n,\e)$-dynamical balls we use $(n,\e)$-average dynamical balls.  We are in particular able to reprove Theorem \ref{teolin}. We also prove that the rate distortion function recovers the measure theoretic entropy in the ergodic case. 

\begin{theorem} \label{theo2} Let $(\X,d)$ be a compact metric space and $\mu\in \M_T(\X)$ an ergodic measure. Then
$$h_\mu(T)=\lim_{\e\to 0} R_\mu(\e)=\widetilde{h}_\mu(T,\delta).$$
\end{theorem}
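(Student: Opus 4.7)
The plan is to prove the two equalities separately: $\widetilde{h}_\mu(T, \delta) = h_\mu(T)$ and $\lim_{\e\to 0} R_\mu(\e) = h_\mu(T)$. Both arguments fix a finite partition $\xi$ of $\X$ with $\mu(\partial \xi) = 0$, choose a scale $\eta$ small enough that $\mu(\partial_\eta \xi) < \eta$, and then exploit ergodicity via Katok's formula, the Shannon--McMillan--Breiman (SMB) theorem, and Birkhoff's ergodic theorem. Crucially, the supremum over $\xi$ is taken only at the very end, keeping $\log|\xi|$ bounded throughout; in particular, the proof works for arbitrary compact metric spaces with no assumption on the box or mean dimension of $\X$.

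For $\widetilde{h}_\mu(T, \delta) = h_\mu(T)$: the inequality $\widetilde{h}_\mu(T, \delta) \le h_\mu(T)$ is immediate because $B_n^{\max}(x, \e) \subseteq \widetilde{B}_n(x, \e)$ implies $\widetilde{N}_\mu(n, \e, \delta) \le N_\mu(n, \e, \delta)$, after which Katok's formula gives the bound. For the converse, fix $\xi$ as above and $\eta$ with $\mu(\partial_\eta \xi) < \eta$. Let $E_n \subseteq \X$ be a good set of $\mu$-measure at least $1 - \delta/2$, on which Birkhoff bounds the frequency of visits to $\partial_\eta \xi$ by $2\eta$ and SMB gives $\mu(\xi_n(x)) \le e^{-n(h_\mu(T, \xi) - o(1))}$. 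For $z \in \widetilde{B}_n(y, \e) \cap E_n$, Markov's inequality bounds the fraction of times $k$ with $d(T^k z, T^k y) > \eta$ by $\e/\eta$, so combined with the boundary control the $\xi$-name of $z$ differs from that of $y$ in at most $n\beta$ positions, where $\beta := \e/\eta + 2\eta$. This produces a cover of $\widetilde{B}_n(y, \e) \cap E_n$ by at most $\binom{n}{n\beta}|\xi|^{n\beta}$ cells of $\xi_n$, so $\mu(\widetilde{B}_n(y, \e) \cap E_n) \le e^{-n(h_\mu(T, \xi) - H_b(\beta) - \beta\log|\xi| - o(1))}$, and hence $\widetilde{N}_\mu(n, \e, \delta) \ge e^{n(h_\mu(T, \xi) - H_b(\beta) - \beta\log|\xi| - o(1))}$. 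Choosing $\eta = \sqrt{\e}$, letting $\e \to 0$, then taking the supremum over $\xi$ yields $\widetilde{h}_\mu(T, \delta) \ge \sup_\xi h_\mu(T, \xi) = h_\mu(T)$.

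For $\lim_{\e\to 0} R_\mu(\e) = h_\mu(T)$: the upper bound is standard, obtained from a partition $\xi$ of diameter less than $\e$, representatives $y_A$ per cell, and $Y = y_{\xi(X)}$, which gives $\overline{d}_n(X, Y) \le \e$ and $I(X^{(n)}; Y^{(n)}) \le H_\mu(\xi_n) = n h_\mu(T, \xi) + o(n)$. For the lower bound, fix $\xi$ with $\mu(\partial \xi) = 0$ and use the data-processing inequality: $I(X^{(n)}; Y^{(n)}) \ge H_\mu(\xi_n) - H(\xi_n(X^{(n)}) \mid Y^{(n)})$. The chain rule and conditioning-reduces-entropy give $H(\xi_n(X^{(n)}) \mid Y^{(n)}) \le \sum_{k=0}^{n-1} H(\xi(X_k) \mid Y_k)$, and Fano's inequality with the guess $\xi(Y_k)$ bounds each term by $H_b(p_k) + p_k \log|\xi|$ with $p_k = \P(\xi(X_k) \ne \xi(Y_k))$. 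By Markov on the distortion constraint and thin boundaries, $\bar p := n^{-1}\sum_k p_k \le \sqrt{\e} + \mu(\partial_{\sqrt{\e}}\xi) \to 0$, so by concavity of $H_b$ the total overhead is $o_\e(1)$. Dividing by $n$, letting $n \to \infty$, and then taking $\sup_\xi$ yields $\lim_{\e\to 0} R_\mu(\e) \ge h_\mu(T)$.

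The main obstacle is the combinatorial step in the $\widetilde{h}_\mu$ lower bound: the error $H_b(\beta) + \beta\log|\xi|$ must vanish, which forces $\xi$ to be held fixed while $\beta \to 0$. Fixing $\xi$ throughout the $\e \to 0$ limit and taking $\sup_\xi$ only at the very end is essential---it keeps $|\xi|$ bounded and avoids any regularity assumption on the compact metric space $\X$. A secondary technical point is the mild stationarity argument needed to justify the per-coordinate Fano bound in the $R_\mu$ estimate, but this fits comfortably into the concavity-based averaging above.
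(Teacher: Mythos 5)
Your proof is correct, and it takes a genuinely different route from the paper's. The paper derives Theorem~\ref{theo2} entirely by assembly: it sandwiches $\lim_{\e\to 0}R_\mu(\e)$ between $\widetilde h_\mu(T,\delta)$ and $h_\mu(T)$ by combining Proposition~\ref{prop1} (the pair-construction from a cover by average balls, giving $R_\mu(\e)\le\widetilde h_\mu(\e,T,\e/2D)$ and $R_\mu(\e)\le h_\mu(\p,T)$), Proposition~\ref{prop2} (which invokes the Source Coding Theorem, Theorem~\ref{rd}, to get $\widetilde h_\mu(4L\e,T,1/L)\le R_\mu(\e)$), and then closes the cycle with Proposition~\ref{prop4}, whose input is the Brin--Katok--type formula for $r$-entropy from \cite{ZZC}. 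You instead prove the two equalities independently and from more primitive ingredients: for $\widetilde h_\mu(T,\delta)=h_\mu(T)$ you do a Katok-style counting directly on average balls, controlling $\xi$-names via Markov plus Birkhoff plus SMB and a $\binom{n}{n\beta}|\xi|^{n\beta}$ bound (morally this re-derives the \cite{ZZC} estimate you need rather than importing it); for $\lim_{\e\to0}R_\mu(\e)=h_\mu(T)$ your upper bound matches the paper's Proposition~\ref{prop1}, but your lower bound uses the data-processing inequality and per-coordinate Fano with concavity of $H_b$, which is the \emph{converse} half of rate-distortion theory written out by hand rather than invoking the existence of good codes (the \emph{achievability} half) as the paper does via Theorem~\ref{rd}. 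The paper's route is shorter given that it has already built Propositions~\ref{prop1}--\ref{prop4} for other purposes and is content to cite \cite{G} and \cite{ZZC}; yours is self-contained and exposes exactly which limits are exchanged and why fixing $\xi$ before sending $\e\to0$ is what keeps the error terms $H_b(\beta)+\beta\log|\xi|$ and $H_b(\bar p)+\bar p\log|\xi|$ under control. Two small points to tighten: in Part~1, the condition $\mu(\partial_\eta\xi)<\eta$ is not automatic once you set $\eta=\sqrt\e$ for all $\e$; it suffices (and is what you actually use) that $\mu(\partial_{\sqrt\e}\xi)\to0$ as $\e\to0$, so replace $2\eta$ by $2\mu(\partial_\eta\xi)$ in the definition of $\beta$. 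Also, for the lower bound you should say explicitly that $E_n$ can be chosen with $\mu(E_n)$ close enough to $1$ that $\mu(A\cap E_n)>0$ for any $A$ with $\mu(A)>1-\delta$, which handles all $\delta\in(0,1)$ rather than only small $\delta$.
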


We then obtain a result analog to the one in Theorem \ref{theo1} when we replace the $(n,\e)$-dynamical balls with the $(n,\e)$-average dynamical balls, i.e. when replacing $h_\mu(\e,T,\delta)$ by $\widetilde{h}_\mu(\e,T,\delta)$. We use this formula to reprove Theorem \ref{teolin}.

The paper is organized as follows. In Section \ref{prelim} we recall some basic definitions from ergodic and information theory. In Section \ref{sec3} we prove Theorem \ref{theo1} and we compute the metric mean dimension of the shift over $([0,1]^n)^\Z$. In Section \ref{sec3} we also obtain a proof of the standard variational principle. In Section \ref{sec4} we discuss the connections between the rate distortion function $R_\mu(\e)$ and the function $\widetilde{h}_\mu(\e,T,\delta)$. We use the results in Section \ref{sec4} to reprove Theorem \ref{teolin}. In Section \ref{sec5} we discuss how generic is for a continuous map or homeomorphism on a manifold to have positive or maximal metric mean dimension. In Section \ref{sec6} we make some final remarks and state some natural questions, we also suggest the definition of what should be the metric mean dimension of a measure.

\begin{ack}
The authors would like to thank G. Iommi and M. Tsukamoto for their interest in this work. The first author would like to thanks to his advisor G. Tian for his constant support and encouragements. The last part of this paper was written when the first author was visiting Pontificia Universidad Cat\'olica de Chile, he is very grateful to G. Iommi for the invitation. Finally, the second author would like to thanks to his advisor J. Bochi  for his continued guidance and encouragements.
\end{ack}

\section{Preliminaries}\label{prelim}
Let  $(\X,d)$ be a metric space and  $T:\X\to\X$ a continuous map. We refer to the triple $(\X,d,T)$ a \emph{dynamical system}. We will not always assume that $\X$ is compact, we will specify when that assumption is required. The metric $d$ induces a topology on $\X$ and this topology endows $\X$ with the borelian $\sigma$-algebra. Any measure on $\X$ is assumed to be defined on the borelian $\sigma$-algebra of $(\X,d)$. We will use standard concepts in ergodic theory, for completeness we briefly define the notions more relevants to this work.  A probability measure $\mu$ on $\X$ is said to be invariant under $T$ or \emph{$T$-invariant} if for any measurable set $A$ we have $\mu(A)=\mu(T^{-1}A)$. A probability $T$-invariant measure $\mu$ is said to be \emph{ergodic} if any measurable set $A$ satisfying $T^{-1}A=A$ has measure zero or one. To a partition $\p=\{\p_1,...,\p_m\}$ of $\X$ we can assign the value $$H_\mu(\p)=-\sum_{i=1}^m \mu(\p_i)\log\mu(\p_i).$$
Observe that everytime we have a measurable map $Z:\X\to W$ with finite image we can associate a partition on $\X$, the \emph{preimage partition} of $Z$ (which is a finite partition since the map has finite image). In this case we denote by $H_\mu(Z)$ to the entropy of the preimage partition of $Z$. Let $\p\vee \mathcal{Q}$ be the refinement of the partitions $\p$ and $\mathcal{Q}$. Given two random variables with finite image $Z_1:\X\to W_1$ and $Z_2:\X\to W_2$ we can define the \emph{mutual information of $Z_1$ and $Z_2$} as $$I_\mu(Z_1,Z_2)=H_\mu(Z_1)+H_\mu(Z_2)-H_\mu(Z_1,Z_2),$$ where $H_\mu(Z_1,Z_2)$ is the entropy of the refinement of the preimage partitions of $Z_1$ and $Z_2$. We define the partitions $\p^n:=\p\vee T^{-1}\p\vee ...\vee T^{-(n-1)}\p$ and the quantity 
 $$h_\mu(\p,T)=\inf_{n}\dfrac{1}{n}H_\mu(\p^n).$$
We define the \emph{entropy of the measure} $\mu$ as $$h_\mu(T)=\sup_\p h_\mu(\p,T),$$
where the supremum runs over finite partitions of $\X$. For more detailed information about the entropy of a measure and its relevance in ergodic theory we refer the reader to \cite{W}. Using the map $T$ and the metric $d$ we can define two new families of metrics on $\X$ by the following expressions  
$$d_n(x,y)=\max_{k\in \{0,...,n-1\}}d(T^kx,T^ky),$$
$$\widetilde{d}_n(x,y)=\dfrac{1}{n}\sum_{k=0}^{n-1} d(T^kx,T^ky).$$
We denote a ball of radius $\e$ in the metric $d_n$  as a \emph{$(n,\e)$-dynamical ball} and a ball of radius $\e$ in the metric $\widetilde{d}_n$  as a \emph{$(n,\e)$-average dynamical ball}.  The following definition allows us to state Theorem \ref{kat} in a cleaner way and also motivates Definition \ref{ent2}.

\begin{definition} \label{ent} Let $\mu$ be a $T$-invariant probability measure. For $\delta\in (0,1)$, $n\in \N$ and $\epsilon>0$ we define $N_\mu(n,\epsilon,\delta)$ to be the minimum number of $(n,\epsilon)$-dynamical balls needed to cover a set of $\mu$-measure strictly bigger than $1-\delta$. Set  $$h_\mu(\epsilon,T,\delta)=\limsup_{n\to \infty}\dfrac{1}{n}\log N_\mu(n,\epsilon,\delta).$$ 
\end{definition}



The following theorem was proven in \cite{ka}. 

\begin{theorem}\label{kat} Let $(\X,d)$  be a compact metric space, $T:\X\to \X$ a continuous transformation and $\mu$  an ergodic $T$-invariant probability measure. Then 
$$h_\mu(T)=\lim_{\epsilon\to 0} h_\mu(\epsilon,T,\delta),$$
where $h_\mu(T)$ is the measure theoretic entropy of $\mu$. In particular the limit above does not depend on $\delta\in (0,1)$. 
\end{theorem}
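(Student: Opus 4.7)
The plan is to prove the two inequalities $\limsup_{\epsilon\to 0} h_\mu(\epsilon,T,\delta) \leq h_\mu(T)$ and $h_\mu(T) \leq \liminf_{\epsilon\to 0} h_\mu(\epsilon,T,\delta)$ separately. First observe that $N_\mu(n,\epsilon,\delta)$ is monotone non-increasing in $\epsilon$, so $h_\mu(\epsilon,T,\delta)$ is monotone non-increasing in $\epsilon$ and the limit as $\epsilon \to 0$ exists (possibly infinite). Both directions rely on the Shannon--McMillan--Breiman theorem applied to carefully chosen partitions.

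For the upper bound, I would fix $\gamma > 0$ and choose a finite Borel partition $\p$ of $\X$ with $\mu(\partial\p) = 0$ and $\diam(\p) < \epsilon$ (such partitions exist on any compact metric space, e.g., by partitioning into small balls whose radii are picked in a set of full Lebesgue measure). Applying Shannon--McMillan--Breiman to the ergodic measure $\mu$ and the partition $\p$, for all $n$ large there is a set $A_n \subseteq \X$ with $\mu(A_n) > 1-\delta$ on which every atom of $\p^n$ has $\mu$-measure at least $e^{-n(h_\mu(\p,T)+\gamma)}$; consequently at most $e^{n(h_\mu(\p,T)+\gamma)}$ atoms of $\p^n$ meet $A_n$. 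Since each atom of $\p^n$ has $d_n$-diameter less than $\diam(\p) < \epsilon$, each sits inside a single $(n,\epsilon)$-dynamical ball, so $N_\mu(n,\epsilon,\delta) \leq e^{n(h_\mu(\p,T)+\gamma)}$, giving $h_\mu(\epsilon,T,\delta) \leq h_\mu(\p,T)+\gamma \leq h_\mu(T)+\gamma$. Letting $\gamma \to 0$ and then $\epsilon \to 0$ closes this half.

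For the lower bound, I fix a finite partition $\p$ and aim at $h_\mu(\p,T) \leq \lim_{\epsilon\to 0} h_\mu(\epsilon,T,\delta)$. Approximate $\p$ by a partition $\p'$ whose boundary is geometrically regular: $\mu(B(\partial\p',\epsilon)) \to 0$ as $\epsilon\to 0$, and $H_\mu(\p \mid \p')$ can be made arbitrarily small (so that $h_\mu(\p',T)$ is within $\gamma$ of $h_\mu(\p,T)$). The key combinatorial estimate is that, for most centers $y$, an $(n,\epsilon)$-dynamical ball $B_n(y,\epsilon)$ meets at most $e^{n\psi(\epsilon)}$ atoms of $\p'^n$, where $\psi(\epsilon) \to 0$ as $\epsilon\to 0$. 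Indeed, if $x \in B_n(y,\epsilon)$ then $d(T^kx,T^ky)<\epsilon$, so $T^kx$ and $T^ky$ lie in the same atom of $\p'$ whenever $T^ky \notin B(\partial\p',\epsilon)$; by the Birkhoff ergodic theorem applied to $\mathbf{1}_{B(\partial\p',\epsilon)}$, the number of "bad" times $k \in \{0,\dots,n-1\}$ with $T^ky \in B(\partial\p',\epsilon)$ is at most $2n\mu(B(\partial\p',\epsilon))$ for most $y$, and a Stirling-type count converts this into the asserted subexponential bound. Applying Shannon--McMillan--Breiman to $\p'$ produces a set $A'_n$ with $\mu(A'_n) > 1-\delta/3$ on which $\mu(\p'^n(x)) \leq e^{-n(h_\mu(\p',T)-\gamma)}$; summing these measures over the $\p'^n$-atoms that intersect a cover of a set of $\mu$-measure $>1-\delta$ by $N_\mu(n,\epsilon,\delta)$ dynamical balls yields
\begin{equation*}
c \;\leq\; N_\mu(n,\epsilon,\delta)\cdot e^{n\psi(\epsilon)}\cdot e^{-n(h_\mu(\p',T)-\gamma)}
\end{equation*}
for some $c > 0$ depending only on $\delta$. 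Rearranging gives $h_\mu(\p',T) - \gamma - \psi(\epsilon) \leq h_\mu(\epsilon,T,\delta) + o(1)$; letting $\epsilon \to 0$, then $\gamma\to 0$, and finally taking the supremum over $\p$ finishes the proof.

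The main obstacle is the lower bound, specifically the construction of the partition $\p'$ with thin boundary and the translation of that geometric regularity into the subexponential count of $\p'^n$-atoms inside a single $(n,\epsilon)$-dynamical ball. This is where the ergodicity hypothesis is essentially used, via Birkhoff's theorem applied to $\mathbf{1}_{B(\partial\p',\epsilon)}$; without ergodicity one would have to integrate local entropies, leading to a Brin--Katok-type statement rather than a clean constant-on-$\delta$ identity.
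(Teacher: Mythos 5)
The paper does not prove Theorem~\ref{kat}; it cites Katok's original paper~\cite{ka} and isolates, in Remark~\ref{rem2}, only the sharp form of the easy inequality ($h_\mu(\p,T)\ge h_\mu(\epsilon,T,\delta)$ for every partition $\p$ of diameter $<\epsilon$), which is exactly what your upper-bound half establishes. So there is no ``paper proof'' to compare against; I can only assess your reconstruction on its own terms and against Katok's standard argument.

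Your overall two-sided strategy via Shannon--McMillan--Breiman is correct and is essentially the textbook route. The upper bound is clean: each atom of $\p^n$ for a partition of diameter $<\epsilon$ has $d_n$-diameter $<\epsilon$, SMB bounds the number of atoms needed to cover measure $1-\delta$, and $\mu(\partial\p)=0$ is not even needed for this half. The lower bound has the right ideas but you gloss over a genuine technical point. You invoke Birkhoff's theorem to say that ``for most centers $y$'' the number of bad times $k$ with $T^ky\in B(\partial\p',\epsilon)$ is small, and conclude that each $(n,\epsilon)$-ball in the cover meets only $e^{n\psi(\epsilon)}$ atoms of $\p'^n$. But the centers $y_i$ realizing $N_\mu(n,\epsilon,\delta)$ are arbitrary points of $\X$, not points taken from the Birkhoff-good set, so your estimate does not apply to them directly. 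The standard fix is to intersect the covered set of measure $>1-\delta$ with both the SMB-good set and the Birkhoff-good set (for $\mathbf{1}_{\partial_{2\epsilon}\p'}$, say), getting a set $K'$ of measure bounded away from zero; then, for each ball $B_n(y_i,\epsilon)$ that meets $K'$, pick a point $y_i'\in B_n(y_i,\epsilon)\cap K'$ and observe $B_n(y_i,\epsilon)\subseteq B_n(y_i',2\epsilon)$. Now every center is typical, the subexponential count applies to the $2\epsilon$-balls, and the $2\epsilon$ in place of $\epsilon$ is harmless since you take $\epsilon\to 0$ at the end. A second small imprecision: you should work with the set $\partial_{2\epsilon}\p'=\{z:B(z,2\epsilon)\text{ meets two atoms of }\p'\}$ rather than the metric neighborhood $B(\partial\p',2\epsilon)$, since in an arbitrary compact metric space (where balls need not be connected) it is only $\partial_{2\epsilon}\p'$ that cleanly gives ``if $T^ky_i'\notin\partial_{2\epsilon}\p'$ then $T^kx$ and $T^ky_i'$ share a $\p'$-atom''; the needed fact $\mu(\partial_{2\epsilon}\p')\downarrow 0$ as $\epsilon\to 0$ still holds when $\mu(\partial\p')=0$. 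With these two repairs the proof is complete and correct.
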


\begin{remark} \label{rem2} Theorem \ref{kat} is proven by establishing inequalities between LHS and RHS. The inequality LHS $\ge$ RHS can be stated more precisely as 
$$h_\mu(\mathcal{P},T)\ge \limsup_{n\to\infty}\dfrac{1}{n}\log N_\mu(n, \epsilon,\delta),$$
where $\p$ is any partition of diameter less than $\epsilon$. This fact will be used in the proof of Proposition \ref{prop1}. We emphasize this inequality is independient of $\delta$. 

\end{remark}

The following definition mimics Definition \ref{ent} when we use $\widetilde{d}_n$ instead of $d_n$. We use Katok's formula to motivate the definition of the \emph{average measure theoretic entropy} of an ergodic measure $\mu$.

\begin{definition} \label{ent2} Let $\mu$ be a $T$-invariant probability measure  on $\X$.  We define $$\widetilde{h}_\mu(\epsilon, T,\delta)=\limsup_{n\to \infty}\dfrac{1}{n} \log \widetilde{N}_\mu(n,\epsilon,\delta),$$
where  $\delta\in (0,1)$ and $\widetilde{N}_\mu(n,\epsilon,\delta)$ is the minimum number of $(n,\epsilon)$-average dynamical balls needed to cover a set of $\mu$-measure  strictly bigger than $1-\delta$. We define the \emph{average measure theoretic entropy} of $\mu$ as the limit $$\widetilde{h}_\mu(T,\delta)=\lim_{\epsilon\to 0} \widetilde{h}_\mu(\epsilon, T,\delta).$$

\end{definition}
One of the goals of this paper is to establish relations between the \emph{rate distortion function} $R_\mu(\e)$ recently used by E. Lindenstrauss and M. Tsukamoto \cite{LT} in the study of the metric mean dimension, and the quantities defined above.  We start by recalling the definition of $R_\mu(\e)$.
\begin{definition} \label{cond} We say that the pair $(X,Y)$ satisfies condition $(*)_{n,\e}$ if the following properties are satisfied.
\begin{enumerate}
\item There exists a probability space $(\Omega,\P)$ such that $X:\Omega\to \X$ and $Y=(Y_0,...,Y_{n-1}):\Omega\to \X^n$ are measurable functions. 
\item The measure $X_*\P$ is a $T$-invariant probability measure on $\X$.
\item $\E(\frac{1}{n}\sum_{k=0}^{n-1}d(T^kX,Y_k))\le \epsilon$.

\end{enumerate}
We say that the pair $(X,Y)$ satisfies condition $(*)_{n,\e,\mu}$ if moreover $X_*\P=\mu$. 

\end{definition}

\begin{definition} Given a $T$-invariant probability measure $\mu$ on $\X$, define the \emph{rate distortion function} as $$R_\mu(\e)=\inf \dfrac{1}{n}I(X,Y),$$ 
where the infimum runs over pairs $(X,Y)$ satisfying condition $(*)_{n,\e,\mu}$.
\end{definition}
In the definition above $I(X,Y)$ is the mutual information of the random variables $X$ and $Y$. If $X$ and $Y$ have finite image this was defined in the second paragraph of the introduction. For the general case (assuming $\X$ is a standard probability space) we refer the reader to Section 5.5 in \cite{G}.

\section{The variational principle}  \label{sec3}
In this section we prove Theorem \ref{theo1} and the standard variational principle. Define $N_d(n,\e)$ as the maximal cardinality of a $(n,\e)$-separated set in $(\X,d)$ and $$S(\X,d,\e)=\limsup_{n\to \infty}\dfrac{1}{n}\log N_d(n,\e).$$ In case the dynamical system has been specified, we will frequently use the simplified notations $S(\e)=S(\X,d,\e)$. We start with the following lemma which is important for our results. 

\begin{lemma} \label{1111} Assume $(\X,d)$ is compact and $T:\X\to \X$ a continuous map. Given $\e>0$, there exists a $T$-invariant measure $\mu_\e$ such that $$h_{\mu_\e}(\e,T,\delta)\ge  S(\X,d,2\e)-3\delta S(\X,d,\e).$$
\end{lemma}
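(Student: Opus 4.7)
The plan is to construct $\mu_\e$ via Misiurewicz's averaging trick, then lower-bound $N_{\mu_\e}(n,\e,\delta)$ at an appropriately chosen sequence of scales by a Fubini--pigeonhole argument that exploits the $(n_l,2\e)$-separation of an empirical support. Pick a subsequence $n_j\to\infty$ realising the $\limsup$ in $S(\X,d,2\e)$, fix a maximal $(n_j,2\e)$-separated set $E_{n_j}\subset\X$ with $|E_{n_j}|=N_d(n_j,2\e)$, and form
\[
\sigma_{n_j}=|E_{n_j}|^{-1}\sum_{x\in E_{n_j}}\delta_x,\qquad \mu_{n_j}=n_j^{-1}\sum_{k=0}^{n_j-1}T^k_*\sigma_{n_j}.
\]
Let $\mu_\e$ be any weak-$*$ sub-subsequential limit of $(\mu_{n_j})$; it is $T$-invariant because $\|T_*\mu_{n_j}-\mu_{n_j}\|_{\mathrm{TV}}=O(1/n_j)$.

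Fix $n$ and suppose $\{B_n(y_i,\e)\}_{i=1}^M$ is an $(n,\e)$-cover of $\mu_\e$-mass exceeding $1-\delta$; choose $l$ with $n_l\gg n$. Since Bowen balls are open, portmanteau gives $\mu_{n_l}(\bigcup_i B_n(y_i,\e))>1-\delta-\eta$ for any preassigned $\eta>0$ and $l$ large. Expanding $\mu_{n_l}$ as a time-average of $T^k_*\sigma_{n_l}$ and applying Fubini---restricted to the interior time range $k\in[\alpha n_l,(1-\alpha)n_l]$ at the additional cost of $2\alpha$---produces some $k^*$ in the interior with
\[
|\{x\in E_{n_l}:T^{k^*}x\in\textstyle\bigcup_i B_n(y_i,\e)\}|\ge(1-\delta-\eta-2\alpha)|E_{n_l}|.
\]
The combinatorial crux: if $x_1\ne x_2$ in $E_{n_l}$ both lie in $T^{-k^*}B_n(y_i,\e)$ for the same $i$, then $d(T^{k^*+j}x_1,T^{k^*+j}x_2)<2\e$ for $j\in[0,n-1]$, so the $(n_l,2\e)$-separation forces them to disagree on the complement $[0,n_l-1]\setminus[k^*,k^*+n-1]$. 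A projection onto this complement yields
\[
|\{x\in E_{n_l}:T^{k^*}x\in B_n(y_i,\e)\}|\le N_d(k^*,2\e)\cdot N_d(n_l-k^*-n,2\e)\le e^{(n_l-n)(S(\X,d,2\e)+o_l(1))},
\]
using the interiority of $k^*$ and the asymptotic $\tfrac{1}{m}\log N_d(m,2\e)\to S(\X,d,2\e)$.

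Combining the bounds, $M\ge(1-\delta-\eta-2\alpha)|E_{n_l}|\,e^{-(n_l-n)(S(\X,d,2\e)+o_l(1))}$; taking $\log$ and dividing by $n$ gives $\tfrac{1}{n}\log M\ge S(\X,d,2\e)-\tfrac{2n_l-n}{n}\,o_l(1)+O(1/n)$, so the main error is of order $\tfrac{2n_l}{n}\,o_l(1)$. I plan to choose $n=n_l^*:=\max\!\bigl(n_l/2,\,\tfrac{2\,o_l(1)\,n_l}{3\delta\,S(\X,d,\e)}\bigr)$, which forces this error to be at most $3\delta\,S(\X,d,\e)$ while still satisfying $n_l^*\to\infty$ as $l\to\infty$; the three $\delta$-scale slacks ($\delta$, $\eta$, $2\alpha$) are then absorbed into the correction term $3\delta S(\X,d,\e)$, each being bounded by a $\delta$-fraction of a full Bowen $(n,\e)$-cover of $\X$ of size at most $e^{nS(\X,d,\e)+o(n)}$. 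Along the sequence of scales $n=n_l^*$ one therefore gets $\tfrac{1}{n_l^*}\log N_{\mu_\e}(n_l^*,\e,\delta)\ge S(\X,d,2\e)-3\delta S(\X,d,\e)-o(1)$, and passing to $\limsup$ finishes the argument.

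The main obstacle is the rate-dependent bookkeeping: the convergence $\tfrac{1}{m}\log N_d(m,2\e)\to S(\X,d,2\e)$ can be arbitrarily slow, and the scale $n_l^*$ of Bowen balls must adapt to this rate---slow rates forcing $n_l^*=O(o_l(1)n_l)\ll n_l$, but always with $n_l^*\to\infty$, so that the bound survives on an infinite subsequence of $n$'s. Equivalently, one must simultaneously control the three auxiliary parameters $\eta,\alpha,\beta:=n_l^*/n_l$ together with the subsequence $(n_l)$ in such a way that the combined error collapses into a \emph{universal} correction of $3\delta S(\X,d,\e)$, independent of the rate of convergence built into $\X,d,T$.
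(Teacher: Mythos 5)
The overall architecture you propose---Misiurewicz averaging of empirical measures on maximal separated sets, weak-$*$ passage to a $T$-invariant limit, portmanteau, then Fubini/pigeonhole over the time coordinate---is exactly what the paper does. But the combinatorial core of your argument, and the way you propose to extract the $3\delta S(\X,d,\e)$ correction, both contain genuine errors.

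\emph{The covering bound is at the wrong scale.} You claim that the $2\e$-separated points of $E_{n_l}$ that fall into $T^{-k^*}B_n(y_i,\e)$ number at most $N_d(k^*,2\e)\cdot N_d(n_l-k^*-n,2\e)$. That is not justified. To bound a set that is $2\e$-separated on an index set $I$, you cover $\X$ by $I$-dynamical balls of \emph{radius} $\e$ (so diameter $\le 2\e$); each such ball contains at most one point of a $2\e$-separated set. When $I=[0,k^*-1]\cup[k^*+n,n_l-1]$, one obtains a bound of the form $N_d(k^*,\e)\cdot N_d(n_l-k^*-n,2\e)$ (group at the head by $\e$-balls, use the $2\e$-separation on the tail within each group), and \emph{not} $N_d(\cdot,2\e)$ in both factors. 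A $2\e$-separated set in a product with the $\max$ metric is not, in general, bounded by the product of the $2\e$-separated cardinalities of the factors---the standard argument loses a factor of two in scale in (at least) one coordinate. With the corrected bound, your estimate for $\tfrac1n\log M$ picks up an extra term $-\tfrac{k^*}{n}\bigl(S(\X,d,\e)-S(\X,d,2\e)\bigr)$, which your accounting never sees.

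\emph{The $3\delta$ correction is not a convergence-rate artifact.} With the corrected scales, the leading deficit is the term $\tfrac{k^*}{n}\,S(\X,d,\e)$ just mentioned. Controlling it by $3\delta S(\X,d,\e)$ forces $k^*\lesssim 3\delta\, n$ with $n$ comparable to $n_l$; concretely one needs something like $n\approx n_l/(1+3\delta)$ and a window for $k^*$ of width $\approx 3\delta n_l$ near the head. Your choice $n_l^*=\max(n_l/2,\ldots)$ defaults to $n_l/2$ for large $l$, which makes $\tfrac{k^*}{n}$ of order $1$, not $3\delta$; the error you are tuning $n_l^*$ to kill (the $\tfrac{2n_l}{n}o_l(1)$) is subdominant and would go to $0$ on its own, while the real deficit is unaddressed. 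Your plan to ``absorb $\delta,\eta,2\alpha$'' each as a $\delta$-fraction of a full Bowen cover is also not correct bookkeeping: those are multiplicative mass losses (prefactors in front of $|E_{n_l}|$), not additive entropy losses, and they contribute only $O(1/n)$ after taking $\tfrac1n\log$.

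\emph{The paper's resolution is structurally different at exactly the point where you run into trouble.} Instead of fixing a shorter Bowen window $n<n_l$ sitting in the middle of $[0,n_l]$ and covering a two-sided complement, the paper covers only the \emph{head} $[0,m_k)$ by $N_d(m_k,\e/2)$ sets of small $d_{m_k}$-diameter, extracts $(n_k-m_k,\e)$-separation on the \emph{tail} for one large group, and then lower-bounds the minimal cover $N_\mu(n_k,\e/2,\delta)$ at the \emph{full} time $n_k$. This is where the width of the interior window ($m_k\lesssim 3\delta n_k$) produces exactly a coefficient $3\delta$ in front of $\tfrac{1}{m_k}\log N_d(m_k,\e/2)\to S(\X,d,\e/2)$, giving $S(\e)-3\delta S(\e/2)$ (equivalently $S(2\e)-3\delta S(\e)$ after reparametrization). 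There is no tail factor to pay for, and no $\tfrac{n_l}{n}$ ratio to control. Also note, as a minor point, that $S(\X,d,\e)$ is a $\limsup$, so there is no ``convergence'' to be slow; only the one-sided inequality $\tfrac1m\log N_d(m,\cdot)\le S(\cdot)+o(1)$ is available, which fortunately is the direction you need.

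If you want to keep your own framing, the fix is essentially to put the Bowen window at the tail ($n=n_l-m$, $m=k^*$, grouping on $[0,m)$ only), which collapses your two-sided complement to the paper's one-sided one; at that point you are reproducing the paper's argument.
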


\begin{proof} Let  $E_n=\{x_1,...,x_{N_d(n,\e)}\}$ be a maximal collection of $(n,\e)$-separated points in $\X$.  Define $$\sigma_n=\dfrac{1}{|E_n|}\sum_{x\in E_n} \delta_x,$$
where $\delta_x$ is the probability measure supported at $x$.  Then define   
$$\overline{\sigma}_n=\dfrac{1}{n}\sum_{k=0}^{n-1} T^k_*\sigma_n.$$
Consider a subsequence $\{n_k\}_{k\in \N}$ such that $$S(\X,d,\e)=\lim_{k\to \infty}\dfrac{1}{n_k}\log N_d(n_k,\e).$$ By standard arguments we can find a subsequence of $\{n_k\}$, which we still denote as $\{n_k\}$, such that $\{\overline{\sigma}_{n_k}\}_{k\in\N}$ converges to a $T$-invariant probability measure $\mu$. We can arrange the sequence such that $\lim_{k\to \infty} \frac{n_k}{k}=\infty$. Let $K$ be a subset of $\X$ with $\mu(K)> 1-\delta$, and $N(K,n,\e)=N_\mu(n,\e,\delta)$, where $N(K,n,\e)$ is defined to be the minimum number of $(n,\e)$-dynamical balls needed to cover $K$. We can assume that $K$ is open in $\X$. There exists $k_0$ such that for every $k\ge k_0$ we have $\overline{\sigma}_{n_k}(K)> 1-\delta$. Let $$L_n=\{(i,j)\in\N^2: 0\le i\le n-1, 1\le j\le N_d(n,\e)\}.$$ We assign to each point in $L_n$ either a $0$ or $1$ in the following way. If $T^ix_j\in K$ then assign $1$ to the point $(i,j)$,  assign $0$ to $(i,j)$ otherwise. By definition of the measure $\overline{\sigma}_{n_k}$ we know that the number of ones in $L_{n_k}$ is bigger or equal than $n_k N_d(n_k,\e)(1-\delta)$. For $s>1$ we define $L_{n_k}(s)$ as the set of points in $L_{n_k}$ with first coordinate in the interval $[[n_k/k],n_k-[n_k/s]-1]$. The number of ones in $L_{n_k}(s)$ is at least $n_k N_d(n_k,\e)(1-\delta-\frac{1}{n_k}[\frac{n_k}{s}]-\frac{1}{n_k}[\frac{n_k}{k}])\ge n_k N_d(n_k,\e)(1-\delta-\frac{1}{s}-\frac{1}{k})$. From now on we assume $s>\frac{1}{1-2\delta-\frac{1}{k}}$, in particular $1-\delta-\frac{1}{s}-\frac{1}{k}>\delta$. We will moreover assume $s<\frac{1}{1-3\delta}$, this can be done if we assume $k$ is sufficiently large so that $\delta>1/k$.    We conclude that the number of ones in $L_{n_k}(s)$ is at least  $n_k N_d(n_k,\e)\delta$. Since $L_{n_k}$ has $(n_k-[n_k/s]-[n_k/k])$ columns, there exists an index $m_k$ such that the $m_k$th column has at least $n_k N_d(n_k,\e)\delta/(n_k-[n_k/s]-[n_k/k])$ ones and $[n_k/k]\le m_k< n_k-[n_k/s]$. Observe that $\X$ can be covered by at most $N_d(m_k,\e/2)$ $(m_k,\e/2)$-dynamical balls, in particular with $N_d(m_k,\e/2)$ subsets of $d_{m_k}$-diameter smaller than $\e$. Also observe that if $i\ne j$ and $d_{m_k}(x_i,x_j)\le \e$, then $d_{n_k-m_k}(T^{m_k}x_i,T^{m_k} x_j)>\e$, because $d_{n_k}(x_i,x_j)>\e$ by the definition of $E_{n_k}$. We can conclude that there exists an subset $I\subset \{1,....,N_d(n_k,\e)\}$ such that for $i\in I$ we have $T^{m_k}x_i\in K$, and $|I|\ge n_k N_d(n_k,\e)\delta/(n_k-[n_k/s]-[n_k/k])>N_d(n_k,\e)\delta$. In other words there exists a subset $A$ of $\{T^{m_k} x_i\}_{i\in I}$ such that  the diameter of $A$ with respect to $d_{m_k}$ is at most $\e$ and $$|A|\ge \dfrac{ N_d(n_k,\e)\delta}{N_d(m_k,\e/2)  }.$$ This implies that if $a,b\in A$ and $a\ne b$, then $d_{n_k}(a,b)\ge d_{n_k-m_k}(a,b)\ge \e$. Then 
$$N_\mu(n_k,\e/2,\delta)=N(K,n_k,\e/2)\ge \dfrac{ N_d(n_k,\e)\delta}{N_d(m_k,\e/2)  }.$$
Therefore
 $$h_\mu(\e/2,T,\delta)=\limsup_{n\to \infty}\dfrac{1}{n}\log N_\mu(n,\e/2,\delta)\ge \limsup_{k\to\infty} \dfrac{1}{n_k}\log N_d(n_k,\e)-\dfrac{m_k}{n_k}\dfrac{1}{m_k}\log N_d(m_k,\e/2).$$
Recall that by construction $m_k<n_k-[n_k/s]$, then $\frac{m_k}{n_k}\le 1-\frac{1}{s}+\frac{1}{n_k}$. Therefore
\begin{align*} h_\mu(\e/2,T,\delta)&\ge \limsup_{k\to\infty} \dfrac{1}{n_k}\log N_d(n_k,\e)-(1-\dfrac{1}{s}+\dfrac{1}{n_k})\dfrac{1}{m_k}\log N_d(m_k,\e/2).\\
&\ge \limsup_{k\to\infty} \dfrac{1}{n_k}\log N_d(n_k,\e)-(3\delta+\dfrac{1}{n_k})\dfrac{1}{m_k}\log N_d(m_k,\e/2).
\end{align*}
 Also observe that the inequality $[n_k/k]\le m_k$ implies that $m_k\to \infty$ as $k\to \infty$. By construction $S(\X,d,\e)=\lim_{k\to \infty}\dfrac{1}{n_k}\log N_d(n_k,\e)$, therefore 
$$h_\mu(\e/2,T,\delta)\ge  S(\e)-3\delta S(\e/2).$$ \end{proof}
In particular, last lemma proves that $$\sup_\mu h_\mu(\e,T,\delta)\ge S(2\e)-3\delta S(\e),$$
where the supremum runs over the set of $T$-invariant probability measures. This observation will be useful in the next corollary. A straightforward application of Lemma \ref{1111} is a proof of the  standard variational principle and the proof of Theorem \ref{theo1}.

\begin{corollary}[Classical Variational principle] Let $(\X,d)$ be a compact metric space and $T:\X\to\X$ a continuous map. Then $$h_{top}(\X,T)=\sup_{\mu\in\M_T(\X)} h_\mu(T).$$
\end{corollary}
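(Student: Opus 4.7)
The plan is to prove the two inequalities of the variational principle separately, using Lemma \ref{1111} for the hard half.

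For the direction $\sup_{\mu \in \M_T(\X)} h_\mu(T) \le h_{top}(\X, T)$, I would first observe the trivial bound $N_\mu(n, \e, \delta) \le N_d(n, \e)$, because any maximal $(n,\e)$-separated set induces an $(n,\e)$-dynamical cover of $\X$, which covers any subset in particular. This yields $h_\mu(\e, T, \delta) \le S(\X, d, \e)$ for every $\mu \in \M_T(\X)$. Letting $\e \to 0$ and applying Theorem \ref{kat} gives $h_\mu(T) \le h_{top}(\X, T)$ for every ergodic $\mu$; the general case follows by ergodic decomposition together with the affinity of $\mu \mapsto h_\mu(T)$.

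For the reverse direction $\sup_{\mu} h_\mu(T) \ge h_{top}(\X, T)$, I would combine Lemma \ref{1111} with Remark \ref{rem2}. Fix $\delta \in (0, 1)$ and $\e > 0$. Lemma \ref{1111} (applied at scale $\e$) supplies a $T$-invariant probability measure $\mu_{\e, \delta}$ with
$$h_{\mu_{\e, \delta}}(\e/2, T, \delta) \ge S(\X, d, \e) - 3\delta S(\X, d, \e/2).$$
Choosing any finite partition $\p$ of $\X$ of diameter less than $\e/2$, Remark \ref{rem2} then gives $h_{\mu_{\e,\delta}}(\p, T) \ge h_{\mu_{\e,\delta}}(\e/2, T, \delta)$, so
$$\sup_{\mu \in \M_T(\X)} h_\mu(T) \;\ge\; h_{\mu_{\e,\delta}}(T) \;\ge\; S(\X, d, \e) - 3\delta S(\X, d, \e/2).$$
I would then let $\delta \to 0$ followed by $\e \to 0$. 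If $h_{top}(\X, T) < \infty$, then $S(\X, d, \e/2) \le h_{top}(\X, T)$ is uniformly bounded, the $3\delta$ correction vanishes with $\delta$, and $S(\X, d, \e) \nearrow h_{top}(\X, T)$ gives the bound. If $h_{top}(\X, T) = \infty$, then for any target $M > 0$ I would first pick $\e$ small enough that $S(\X, d, \e) > 2M$, and then pick $\delta$ small enough that $3\delta S(\X, d, \e/2) < M$; the latter is possible because $S(\X, d, \e/2)$ is finite for each fixed $\e > 0$ on the compact space $\X$, so $\sup_\mu h_\mu(T) \ge M$ for all $M$.

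The main obstacle I anticipate is the mismatch in ergodicity: Remark \ref{rem2} is cited from the proof of Theorem \ref{kat}, which is stated for ergodic measures, whereas the measure $\mu_{\e,\delta}$ produced by Lemma \ref{1111} is a weak-$*$ accumulation point of empirical averages and need not be ergodic. I expect to bridge this gap by either (a) applying the ergodic inequality to each component of the ergodic decomposition of $\mu_{\e,\delta}$ and using affinity of $h_\nu(T)$ to find an ergodic component on which $h_\nu(T) \ge S(\X, d, \e) - 3\delta S(\X, d, \e/2)$, or (b) noting that the partition-level inequality $h_\mu(\p, T) \ge h_\mu(\e, T, \delta)$ for $\diam(\p) < \e$ does not actually require ergodicity. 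Either correction is local and commutes with the limits in $\delta$ and $\e$.
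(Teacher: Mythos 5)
Your proposal is correct and follows essentially the same argument as the paper: both directions come from $N_\mu(n,\e,\delta)\le N_d(n,\e)$ plus Theorem \ref{kat} on one side and Lemma \ref{1111} plus Remark \ref{rem2} on the other, with the same chaining of the $S(\e) - 3\delta S(\e/2)$ bound and the same order of limits. You also correctly anticipated the only delicate point, namely that the measure produced by Lemma \ref{1111} need not be ergodic while Remark \ref{rem2} is drawn from Katok's ergodic argument; the paper resolves this exactly as in your option (a), by adapting the remark through ergodic decomposition to get $h_\mu(\e,T,\delta)\le\sup_x h_{\mu_x}(\Q)\le\sup_{\nu\text{ ergodic}}h_\nu(T)$.
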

\begin{proof} From Proposition \ref{1111} we know $$\sup_\mu h_\mu(\e,T,\delta)\ge S(2\e)-3\delta S(\e).$$
Let $\mu=\int \mu_x d\mu(x)$ be the ergodic decomposition of $\mu$. The proof of the inequality in Remark \ref{rem2} can be adapted to obtain $$h_\mu(\e,T,\delta)\le \sup_{x\in \X} h_{\mu_x}(\Q)\le \sup_{\mu\text{ ergodic}}h_\mu(T),$$
for any partition $\Q$ of diameter smaller than $\e$. Also observe that Theorem \ref{kat} implies 
$$ h_\mu(T)\le h_{top}(\X,T),$$
for every ergodic measure $\mu$. Finally $$S(2\e)-3\delta S(\e)\le \sup_\mu h_\mu(\e,T,\delta)\le \sup_{\mu\text{ ergodic}}h_\mu(T)\le h_{top}(T).$$
Finally taking $\e\to 0$ and $\delta\to 0$ we get that $$\sup_{\mu\text{ ergodic}}h_\mu(T)=h_{top}(T).$$
\end{proof}

\begin{corollary} Let $(\X,d)$ be a compact metric space and $T:\X\to\X$ a continous map. Then 
 $$\overline{mdim}(\X,d,T)=\lim_{\delta\to 0}\limsup_{\e\to 0}\dfrac{\sup_{\mu\in\M_T(\X)}h_\mu(\e,T,\delta)}{|\log\e|}.$$
\end{corollary}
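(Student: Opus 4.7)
The strategy is to prove two matching inequalities. The lower bound is extracted from Lemma \ref{1111}, and the upper bound is an immediate consequence of the elementary comparison between $N_\mu(n,\e,\delta)$ and $N_d(n,\e)$.

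For the lower bound, the remark stated right after Lemma \ref{1111} gives, for every $\e>0$ and every $\delta\in(0,1)$,
$$\sup_{\mu\in\M_T(\X)}h_\mu(\e,T,\delta)\ge S(\X,d,2\e)-3\delta\, S(\X,d,\e).$$
Divide by $|\log\e|$ and pass to a subsequence $\e_k\to 0$ along which $S(\X,d,2\e_k)/|\log\e_k|$ attains its upper limit. Since $|\log(2\e)|/|\log\e|\to 1$ as $\e\to 0$, the change of variable $\e'=2\e$ shows $\limsup_{\e\to 0} S(\X,d,2\e)/|\log\e|=\overline{mdim}(\X,d,T)$. Combining with the trivial bound $\limsup_k S(\X,d,\e_k)/|\log\e_k|\le \overline{mdim}(\X,d,T)$, we obtain
$$\limsup_{\e\to 0}\dfrac{\sup_{\mu\in\M_T(\X)}h_\mu(\e,T,\delta)}{|\log\e|}\ge (1-3\delta)\,\overline{mdim}(\X,d,T),$$
and letting $\delta\to 0$ gives one of the two required inequalities.

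For the upper bound, a maximal $(n,\e)$-separated subset $E$ of $\X$ is automatically $(n,\e)$-spanning: every $x\in\X$ is within $d_n$-distance at most $\e$ of some point of $E$. In particular, $\X$ itself is covered by at most $N_d(n,\e)$ $(n,\e)$-dynamical balls (with a harmless adjustment of the radius if one prefers open balls). Since covering all of $\X$ certainly covers any subset of $\mu$-measure strictly greater than $1-\delta$, we get $N_\mu(n,\e,\delta)\le N_d(n,\e)$ for every $\mu\in\M_T(\X)$, every $\delta\in(0,1)$, and every $n\in\N$. Consequently $h_\mu(\e,T,\delta)\le S(\X,d,\e)$ for every such $\mu$ and $\delta$, and dividing by $|\log\e|$ and taking $\limsup$ yields
$$\limsup_{\e\to 0}\dfrac{\sup_{\mu\in\M_T(\X)}h_\mu(\e,T,\delta)}{|\log\e|}\le\overline{mdim}(\X,d,T).$$
This estimate is uniform in $\delta$, so the outer $\lim_{\delta\to 0}$ preserves it, and combining with the lower bound concludes the argument.

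The main obstacle has already been absorbed into Lemma \ref{1111}; once that measure-theoretic construction is in hand, the corollary follows by routine manipulations of $\limsup$'s. The only care needed is in the change-of-variable step establishing $\limsup_{\e\to 0}S(\X,d,2\e)/|\log\e|=\overline{mdim}(\X,d,T)$, which relies on $|\log(2\e)|\sim|\log\e|$ as $\e\to 0$, and the subsequential argument ensuring that one can extract a simultaneous bound for $S(\X,d,2\e_k)/|\log\e_k|$ and $S(\X,d,\e_k)/|\log\e_k|$ without encountering an indeterminate $\infty-\infty$.
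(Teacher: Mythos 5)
Your proof is correct and follows essentially the same route as the paper: the lower bound for the right-hand side is extracted directly from Lemma \ref{1111} (together with the trivial comparison $S(2\e)/|\log\e|\sim S(2\e)/|\log 2\e|$), and the upper bound comes from the elementary observation $N_\mu(n,\e,\delta)\le N_d(n,\e)$, so $h_\mu(\e,T,\delta)\le S(\X,d,\e)$. The paper's proof is a two-line version of exactly this, and you have merely spelled out the limsup bookkeeping; note that, like the paper, your inequality $(1-3\delta)\,\overline{mdim}$ implicitly presumes $\overline{mdim}(\X,d,T)<\infty$, a point you flag but do not fully resolve.
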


\begin{proof}
The inequality $LHS\ge RHS$ follows directly from Lemma \ref{1111}. The other inequality follows trivially, because by definition $h_\mu(\e,T,\delta)\le S(\epsilon)$.
\end{proof}

As an application of this formula we will compute the metric mean dimension of $([0,1]^\Z,d_T,T)$. It is well known that $\overline{mdim}( [0,1]^{\Z},d_T,T)=1$, see for instance \cite{LW} and \cite{LT}. We will proceed to verify this fact. We recall that the metric $d_T$ is given by $d_T(x,y)=\sum_{k\in\Z} \frac{1}{2^{|k|}}d(x_k,y_k)$, where $x=(...,x_{-1},x_0,x_1...)$, $y=(...,y_{-1},y_0,y_1...)$ and $d$ is the standard metric on $[0,1]$.

\begin{proposition} The metric mean dimension of the shift map on $[0,1]^\Z$ with the metric $d_T$ is given by
$$\overline{mdim}( [0,1]^{\Z},d_T,T)=1.$$
\end{proposition}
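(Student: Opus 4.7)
The plan is to invoke the variational principle just proved, which reduces the proposition to verifying
$$\lim_{\delta\to 0}\limsup_{\e\to 0}\dfrac{\sup_{\mu\in\M_T(\X)}h_\mu(\e,T,\delta)}{|\log\e|}=1,$$
where $\X=[0,1]^\Z$ and $T$ is the shift. The upper and lower inequalities require different arguments.

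For the upper bound I use the universal estimate $h_\mu(\e,T,\delta)\le S(\X,d_T,\e)$ and show that $S(\X,d_T,\e)\le|\log\e|+O(1)$. Choose a truncation window $L=L(\e)$ of order $\log_2(1/\e)$ so that $\sum_{|k|>L}2^{-|k|}<\e/4$. Any two points $x,y\in\X$ whose coordinates $x_j,y_j$ lie in a common cell of a fixed $(\e/16)$-net of $[0,1]$ for every $j\in[-L,n-1+L]$ then satisfy $d_{T,n}(x,y)<\e/2$, because the central contributions are small by choice of the net while the outside contributions are small by the choice of $L$. Counting these rounded configurations produces an $(n,\e/2)$-cover of $\X$ of cardinality at most $(C/\e)^{n+2L+O(1)}$, so after taking $\frac{1}{n}\log$ and letting $n\to\infty$ one obtains $S(\X,d_T,\e)\le|\log\e|+O(1)$, and dividing by $|\log\e|$ yields the $\le 1$ inequality.

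For the lower bound I exhibit, for each small $\e$, an explicit Bernoulli measure. Set $N=\lfloor 1/(8\e)\rfloor$ so that $1/N>2\e$, let $\nu_N$ be the uniform probability on the grid $\{0,1/N,\dots,(N-1)/N\}\subset[0,1]$, and let $\mu_\e$ be the Bernoulli product of copies of $\nu_N$ on $[0,1]^\Z$, which is $T$-invariant. If $x,y\in\supp(\mu_\e)$ lie in a common $(n,\e)$-dynamical ball, then $d_{T,n}(x,y)<2\e$ by the triangle inequality; since $d(x_i,y_i)$ appears in $d_T(T^ix,T^iy)$ with weight $1$, this forces $d(x_i,y_i)<2\e<1/N$ for every $0\le i\le n-1$, but two distinct grid values differ by at least $1/N$, so $x_i=y_i$. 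Hence each such ball meets $\supp(\mu_\e)$ inside a single cylinder $\{y:y_i=c_i,\,0\le i\le n-1\}$ of $\mu_\e$-mass $N^{-n}$, and covering a set of measure $>1-\delta$ therefore requires at least $(1-\delta)N^n$ balls. Consequently $h_{\mu_\e}(\e,T,\delta)\ge\log N\ge|\log\e|-O(1)$, uniformly in $\delta$, and the $\limsup$ as $\e\to 0$ is at least $1$.

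The main technical obstacle is the bi-infinite tail of $d_T=\sum_{k\in\Z}2^{-|k|}d(x_k,y_k)$. In the upper bound it is handled by the logarithmic-size truncation at $|k|\le L$, whose contribution $O(\log(1/\e))$ to the exponent is absorbed by the $n\to\infty$ limit. In the lower bound the tail is harmless since the central weight is already $1$, so $d_{T,n}(x,y)<2\e$ immediately controls the first $n$ coordinates. Combining the two estimates and sending first $\e\to 0$ and then $\delta\to 0$ gives $\overline{mdim}([0,1]^\Z,d_T,T)=1$.
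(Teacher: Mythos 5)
Your proof is correct and follows essentially the same approach as the paper for the nontrivial lower bound: a Bernoulli product of a discrete uniform measure on a fine grid, with the observation that an $(n,\e)$-dynamical ball meets the support in a single cylinder when the grid spacing exceeds $2\e$, giving $\mu(B_n(q,\e))\le N^{-n}$. For the upper bound the paper simply refers the reader to~\cite{LT}, whereas you supply the standard truncation argument explicitly; the paper carries out that same truncation a few lines later in the more general setting of $Y^\Z$ (Theorem~\ref{calculo}).
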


\begin{proof} For $k\ge 1$ consider the set $P_k=\{p_1, p_2, \cdots, p_k\}$, where $p_i=\frac{2i-1}{2k}$. Define $\lambda_k$ as the probability measure on $[0,1]$ that equidistribute the points in $P_k$ and let $\mu_k= (\lambda_k)^{\otimes \Z}$, the product measure on $[0,1]^\Z$. Define $$A_{i_0,i_2,\dots, i_{n-1}}=\{x\in [0,1]^{\Z}: x_0=p_{i_0}, \dots , x_{n-1}=p_{i_{n-1}}\}.$$ 
We will need the following fact.  
\begin{lemma}
Let $r<1/2k$ and $q \in [0,1]^{\Z}$. Then there is a unique set $A_{i_0,i_2,\dots, i_{n-1}}$ such that $$supp (\mu_k) \cap B_n(q,r) \subset A_{i_0,i_2,\dots, i_{n-1}}.$$
\end{lemma}

\begin{proof}
Let $x \in supp(\mu_k) \cap B_n(q,r)$. By definition $$d(x_j,q_j)\le d(T^jx,T^jq)\le d_n(x,q)\le r<\frac{1}{2k},\quad \forall j \in\{0,\cdots,n-1\}.$$ Since $x \in supp(\mu_k)$ we conclude $x_j\in P_k$. Otherwise the neighbourhood $\mathcal{N}_j=\cdots\times [0,1] \times U_j\times [0,1] \times \cdots$ of $x$ has zero $\mu_k$-measure, where $U_j\subset [0,1]$ is an open set containing $x_j$ with $U_j \cap P_k=\emptyset$. From the choice of $r$ we conclude $x_j$ can take only one value in $P_k$, say $p_{i_j}$. 
\end{proof}

This lemma in particular implies that $$\mu_k(B_n(q,r))\le  \mu_k(A_{i_0,i_2,\dots, i_{n-1}})=\frac{1}{k^n},$$
for every $q\in [0,1]^\Z$. Then if $A$ satisfy $\mu_k(A)>1-\delta$ and $A\subset \bigcup_{i=1}^L B_n(z_i,r)$, then $$1-\delta<\mu_k(A)\le \mu_k(\bigcup_{i=1}^L B_n(z_i,r))\le \frac{L}{k^n}.$$
This implies that $N_{\mu_k}(n,r,\delta)\ge(1-\delta)k^n$, for every $n\ge 1$ and $\delta\in (0,1)$. Finally

$$\lim_{r \to 0}\frac{\sup_\mu h_{\mu}(r,T,\delta)}{|\log r|}\ge \lim_{k \to \infty}\frac{h_{\mu_k}(\frac{1}{3k},T,\delta)}{\log 3k}\ge \lim_{k \to \infty} \frac{\log k}{\log 3k}=1.$$
The opposite inequality is easier, we refer the reader to  \cite{LT} for the details. 

\end{proof}

This computation trivially extends to higher dimensions. We recover the formula 
$$\overline{mdim}(([0,1]^d)^\Z,d_T,T)=d.$$
This can also be generalized to more general metric spaces. Let $Y$ be a compact metric space with metric $d$. Given $\e>0$, define $N(\e)$ as the maximal cardinality of an $\e$-separated set in $(Y,d)$. The upper Minkowski dimension or upper box dimension of $Y$ is defined as $$\overline{dim}_{B}(Y,d)=\limsup_{\e\to 0}\frac{N(\e)}{|\log\e|}.$$ Consider a decreasing sequence $\{\e_k\}_{k\in\N}$ converging to zero such that $\lim_{k\to \infty} \frac{N(\e_k)}{|\log\e_k|}= \overline{dim}_{B}(Y)$. The role of the points $P_k$ is replaced by a maximal collection of $\e_k$-separated points in $Y$. The measures $\lambda_k$ is the probability measure that equidistributes a maximal collection of $\e_k$-separated points. As before we get the inequality $\mu_k(B_n(q,r))<N(\e_k)^{-n}$, for every $q\in Y^\Z$ and $r<\frac{\e_k}{2}$.  We finally get the inequality 
$$\limsup_{\e \to 0}\frac{\sup_\mu h_{\mu}(r,T,\delta)}{|\log r|}\ge \limsup_{k \to \infty}\frac{h_{\mu_k}(\e_k/3,T,\delta)}{|\log(\e_k/3)|}\ge \lim_{k \to \infty} \frac{\log N(\e_k)}{|\log \e_k|}=\overline{dim}_{B}(Y).$$
We conclude $\overline{mdim}(Y^\Z,d_T,T)\ge \overline{dim}_{B}(Y).$ Let $\epsilon \in (0,1)$ and consider a natural number $l$ such that $\sum_{|j|\ge l} 2^{-|j|}<\epsilon /(2\cdot \diam (Y))$.  Let $M$ the maximum cardinality of an $\e$-separated subset $B=\{x_i\}_{i=1}^M$ of $\mathcal{X}$. Consider the function defined by $f(x)=x_i$, where $x_i$ is the closest point to $x$ in the subset $B$, whenever $x_i$ is uniquely defined. We can extend $f$ to a measurable function on $\X$. Define the sets $A_i=f^{-1}(x_i)$, and  $$S_{i_{-l},\dots, i_{n+l}}=\{y\in Y^\Z: y_j\in A_{i_j}\textrm{ for all } -l\le j \le n+l\},$$ where $i_j\in \{1,...,M\}$ for each $j$. Observe that $z,y \in S_{i_{-l},\dots, i_{n+l}}$ implies $$d(T^iz, T^iy)\le\diam(Y)\sum_{|j|\ge l} 2^{-|j|}+\sum_{|j|\le l}2^{-|j|}d(z_{i+j},y_{i+j})<\frac{\epsilon}{2}+2\epsilon \sum_{|j|\le l}2^{-|j|}<7\epsilon,$$
for every $0\le i\le n$. This inequality proves that every set $S_{i_{-l},\dots, i_{n+l}}$ has diameter less than $7\epsilon$ with respect to the metric $d_n$. Since the collection of sets $S_{i_{-l},\dots, i_{n+l}}$ is a covering of $Y^\Z$ we conclude $N_{d_T}(n,7\e)\le M^{n+2l+1}=N(\e)^{n+2l+1}.$ Since for any measure $\mu$ we have $N_{\mu}(n,\e,\delta)\le N_{d_T}(n,\e)$,  we conclude
\begin{align*}\limsup_{\e \to 0}\frac{\sup_\mu h_{\mu}(\e,T,\delta)}{|\log \e|} \le & \lim_{\epsilon \to 0}\frac{\limsup_{n\to \infty} \frac{1}{n}\log N_{d_T}(n,\e)}{|\log \epsilon|}\\ \le&  \limsup_{\epsilon \to 0}\frac{\log N(\epsilon)}{|\log \epsilon/7|}=\overline{dim}_B(Y,d).
\end{align*}

We summarize this in the following result. 
\begin{theorem} \label{calculo} With the notation above we have $$ \overline{mdim}(Y^\Z,d_T,T)=\overline{dim}_B(Y,d).$$
\end{theorem}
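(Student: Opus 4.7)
The plan is to invoke Theorem \ref{theo1}, which rewrites $\overline{mdim}(Y^\Z, d_T, T)$ as
\[
\lim_{\delta \to 0} \limsup_{\e \to 0} \frac{\sup_{\mu \in \M_T(Y^\Z)} h_\mu(\e, T, \delta)}{|\log \e|},
\]
and then to establish matching inequalities with $\overline{dim}_B(Y, d)$ separately. Both bounds follow the template of the $Y = [0,1]$ computation carried out immediately before the theorem statement; the only modification is to replace the equispaced grid $P_k = \{(2i-1)/(2k)\}$ by a maximal $\e_k$-separated subset of $Y$ along a sequence $\e_k \to 0$ that realizes the $\limsup$ defining $\overline{dim}_B(Y, d)$.

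For the lower bound, I would pick $\e_k \to 0$ with $\log N(\e_k)/|\log \e_k| \to \overline{dim}_B(Y, d)$, take a maximal $\e_k$-separated set $P_k \subset Y$, and form the shift-invariant product measure $\mu_k = \lambda_k^{\otimes \Z}$, where $\lambda_k$ is uniform on $P_k$. The key separation estimate, in direct analogy with the lemma already proved for $[0,1]^\Z$, is that for $r < \e_k/2$ and any $q \in Y^\Z$, every $x \in \supp(\mu_k) \cap B_n(q, r)$ satisfies $d(x_j, q_j) \leq d(T^j x, T^j q) \leq d_n(x, q) < \e_k/2$ for $0 \leq j \leq n-1$, forcing each $x_j$ to equal the unique closest element of $P_k$ to $q_j$. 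This yields $\mu_k(B_n(q, r)) \leq N(\e_k)^{-n}$, hence $N_{\mu_k}(n, r, \delta) \geq (1-\delta) N(\e_k)^n$ and $h_{\mu_k}(\e_k/3, T, \delta) \geq \log N(\e_k)$. Normalizing by $|\log(\e_k/3)|$ and sending $k \to \infty$ produces the bound.

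For the upper bound, I would bound $N_\mu(n, \e, \delta) \leq N_{d_T}(n, \e)$ and estimate $N_{d_T}(n, \e)$ directly via a coarsening. Given $\e > 0$, choose $l \in \N$ so that $\diam(Y) \sum_{|j| \geq l} 2^{-|j|} < \e/2$, take a maximal $\e$-separated set $B = \{x_1, \dots, x_{N(\e)}\} \subset Y$ (which is automatically $\e$-dense), assign to each $y \in Y$ a measurable nearest-point projection $f(y) \in B$, and set $A_i = f^{-1}(x_i)$. The finite family of cylinders $S_{i_{-l}, \dots, i_{n+l}} = \{y \in Y^\Z : y_j \in A_{i_j} \text{ for } -l \leq j \leq n+l\}$ covers $Y^\Z$. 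For two points $z, y$ of such a cylinder and any $0 \leq i \leq n$, splitting $d_T(T^i z, T^i y)$ into the tail $|k| \geq l$ (absorbed by the choice of $l$) and the window $|k| < l$ (where each coordinate contributes at most $2\e$, since $z_{i+k}$ and $y_{i+k}$ lie in a common $A_j$) gives $d_T(T^i z, T^i y) < 7\e$. Therefore $N_{d_T}(n, 7\e) \leq N(\e)^{n + 2l + 1}$, and dividing by $n$, then by $|\log(7\e)|$, and sending $\e \to 0$ closes the argument.

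The main technical nuisance is the bookkeeping for the weights $2^{-|k|}$ in the upper bound: the cylinder index window must widen from $\{0, \dots, n-1\}$ to $\{-l, \dots, n+l\}$ so that the bi-infinite tail of $d_T$ remains controlled after applying every shift $T^i$ with $0 \leq i \leq n$. This widening costs a multiplicative factor $N(\e)^{2l+1}$ in the cover count, which is constant in $n$ and therefore disappears upon dividing by $n$; and $l$ depends only logarithmically on $1/\e$, so the subsequent normalization by $|\log \e|$ is unaffected.
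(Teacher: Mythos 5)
Your proposal reproduces the paper's own argument step for step: the lower bound via product measures $\lambda_k^{\otimes\Z}$ supported on maximal $\e_k$-separated sets chosen along a sequence realizing $\overline{\dim}_B(Y,d)$, and the upper bound via the nearest-point projection and the cylinder cover indexed over the widened window $\{-l,\dots,n+l\}$, with the same $7\e$ constant and the same $N(\e)^{n+2l+1}$ count. The only nitpick is in the lower-bound estimate: $x_j$ need not be the \emph{closest} element of $P_k$ to $q_j$; the correct statement is that at most one element of the $\e_k$-separated set $P_k$ can lie within $\e_k/2$ of $q_j$, which is exactly what the paper says and is all that is needed.
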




\section{Rate distortion function and $\widetilde{h}_\mu(\e,T,\delta)$}  \label{sec4}

 \begin{proposition} \label{prop1}  Let $\mu$ be an ergodic $T$-invariant probability measure. Assume $(\X,d)$ is compact and denote its diameter by $D$. Then $$R_\mu(\epsilon)\le \widetilde{h}_\mu(\e,T, \e/2D). $$
Let $\p$ be any partition of diameter less than $\e$. Then
$$R_\mu(\epsilon)\le h(\p,T). $$

\end{proposition}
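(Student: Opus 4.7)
The plan is to handle both inequalities via the same template: construct a pair $(X,Y)$ satisfying $(*)_{n,\e,\mu}$ in which $Y$ is a finite-valued function of $X$, so that $H_\mu(Y\mid X)=0$ and the mutual information reduces to the entropy of the range of $Y$. The second inequality uses the partition $\p$ itself as the quantizer; the first uses a near-optimal $\widetilde{d}_n$-cover.

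For $R_\mu(\e)\le h_\mu(\p,T)$: I would pick representatives $z_j\in P_j$, define $\tau(x)=z_{j(x)}$ for $x\in P_{j(x)}$, and with $X\sim\mu$ set $Y=(\tau(X),\tau(TX),\ldots,\tau(T^{n-1}X))$. The vector $Y$ is measurable with respect to the preimage partition of $\p^n$, so $I_\mu(X,Y)\le H_\mu(\p^n)$. Since each $\tau(T^kX)$ lies in the same $\p$-atom as $T^kX$, of diameter $<\e$, the pointwise distortion is $<\e$, hence $(*)_{n,\e,\mu}$ holds. Thus $R_\mu(\e)\le\tfrac{1}{n}H_\mu(\p^n)$ for every $n$; letting $n\to\infty$ yields $R_\mu(\e)\le h_\mu(\p,T)$.

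For $R_\mu(\e)\le\widetilde{h}_\mu(\e,T,\e/(2D))$: fix $\delta=\e/(2D)$ and $n$, pick $N=\widetilde{N}_\mu(n,\e,\delta)$ centers $z_1,\dots,z_N$ whose $(n,\e)$-average dynamical balls cover a measurable set $K$ with $\mu(K)>1-\delta$, disjointify to $K=\bigsqcup_i B_i'$ with $\widetilde{d}_n(x,z_i)\le\e$ on $B_i'$, fix a base point $z_0\in\X$, and define $Y(x)=(T^kz_{i(x)})_{k=0}^{n-1}$ when $x\in B_{i(x)}'$ and $Y(x)=(T^kz_0)_{k=0}^{n-1}$ otherwise. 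Then $Y$ takes at most $N+1$ values and is a function of $X$, so $I_\mu(X,Y)\le\log(N+1)$, while the expected distortion splits as
$$\E\Bigl[\tfrac{1}{n}\textstyle\sum_{k=0}^{n-1}d(T^kX,Y_k)\Bigr]\le \e\,\mu(K)+D\,\mu(\X\setminus K)\le\e+D\delta,$$
and the calibration $\delta=\e/(2D)$ is what balances the two contributions. Passing to $\limsup$ in $n$ delivers the stated bound.

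The main technical obstacle is to tune the distortion estimate exactly to $\le\e$: the naive split gives $\e+D\delta$, so to comply with $(*)_{n,\e,\mu}$ as stated one shrinks either the cover radius or $\delta$ by an absolute constant, an adjustment absorbed once one takes $\limsup$ in $n$ and uses the monotonicity of $R_\mu(\cdot)$ and $\widetilde{h}_\mu(\cdot,T,\cdot)$. Beyond this bookkeeping, both inequalities are instances of the same principle: any efficient discrete quantizer for $\mu$ yields, via the elementary estimate $I_\mu(X,Y)\le H_\mu(Y)$, an upper bound on the rate-distortion function.
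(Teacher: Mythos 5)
Your proposal is correct and its core mechanism is the same as the paper's: in both parts, $Y$ is taken to be a finite-valued deterministic function of $X$, so $I_\mu(X,Y)\le H_\mu(Y)\le\log|Y(\X)|$, and the quantizer is built from a near-optimal $\widetilde d_n$-cover of a large-measure set in the first inequality, and from a small-diameter partition in the second. For the first inequality your construction, the split $\mu(K)\cdot\e+\mu(\X\setminus K)\cdot D$, and the calibration $\delta=\e/(2D)$ coincide with the paper's. For the second inequality you are somewhat more direct: the paper repeats the same quantizer argument with $d_n$-balls to first get $R_\mu(\e)\le\liminf_n\frac1n\log N_\mu(n,\e,\e/(2D))$ and then invokes Remark~\ref{rem2} (Katok's estimate $\limsup_n\frac1n\log N_\mu(n,\e,\delta)\le h_\mu(\p,T)$ for $\p$ of diameter $<\e$), whereas you quantize with $\p$ itself and observe $Y$ is $\p^n$-measurable, so $I_\mu(X,Y)\le H_\mu(\p^n)$ and $R_\mu(\e)\le\inf_n\frac1n H_\mu(\p^n)=h_\mu(\p,T)$ falls out without passing through covering numbers or Katok's theorem at all; that route is cleaner and avoids the $\liminf$/$\limsup$ juggling. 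One small caveat on your closing remark: shrinking the cover radius to make the distortion land exactly at $\le\e$ yields $R_\mu(\e)\le\widetilde h_\mu(\e/2,T,\e/(2D))$, and monotonicity of $\widetilde h_\mu$ in the radius runs the \emph{wrong} way to upgrade that to $\widetilde h_\mu(\e,T,\e/(2D))$; this wrinkle is also present in the paper's own write-up (it uses radius-$\e/2$ balls in the distortion estimate while quoting $\widetilde N_\mu(n,\e,\cdot)$ in the conclusion), and it is harmless because only the $\e\to0$ asymptotics of these bounds are ever used downstream.
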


\begin{proof}

Recall that by definition $R_\mu(\epsilon)$ is the infimum of the quantities $\frac{1}{n}I(X,Y)$, where $(X,Y)$ satisfies condition $(*)_{n,\epsilon,\mu}$ (here $n$ is also allowed to vary). In particular if we want to give an upper bound to $R_\mu(\epsilon)$ we can just exhibit a good choice of pair $(X,Y)$. Since $(\X,d)$ is a compact metric space it has finite diameter, call $D$ to the diameter of $(\X,d)$. Choose $K\subset \X$ such that $\widetilde{N}(K,n,\epsilon)=\widetilde{N}_\mu(n,\epsilon, \frac{\epsilon}{2D})= N$, i.e. there exist points $\{x_i\}_{i=1}^N$ in $\X$ such that $K\subset \bigcup_{i=1}^N \widetilde{B}_n(x_i, \e/2D)$ and $\mu(K)>1-\frac{\epsilon}{2D}$, where last union is disjoint. Pick a point $p\in \X\setminus \{x_i\}_{i=1}^N$ and define a measurable function $f:\X\to\X$ in such way that for $x\in K$ we have that $f(x)=x_i$ for some $x_i$ with $x\in \widetilde{B}_n(x_i,\e/2)$, if $x\in \X\setminus K$, then $f(x)=p$. By construction  $|f(\X)|=N+1$. Define $Y_k=T^k\circ f\circ X$. Observe that 
\begin{align*}
& \E\Big(\dfrac{1}{n}\sum_{k=0}^{n-1}d(T^kX,Y_k)\Big)=\int_\X \dfrac{1}{n}\sum_{k=0}^{n-1}d(T^k(x),T^kf(x))d\mu(x) \\
 &= \int_K \dfrac{1}{n}\sum_{k=0}^{n-1}d(T^k(x),T^kf(x))d\mu(x)+\int_{\X\setminus K} \dfrac{1}{n}\sum_{k=0}^{n-1}d(T^k(x),T^kf(x))d\mu(x)\\
&\le \int_K \dfrac{1}{n}\sum_{k=0}^{n-1}d(T^k(x),T^kf(x))d\mu(x)+\mu(\X\setminus K) D .
\end{align*}
Finally observe that  by definition of $f$ and if $x\in K$ then we have $\widetilde{d}_n(x,f(x))\le \epsilon/2$. We immediately conclude that $$\E\Big(\dfrac{1}{n}\sum_{k=0}^{n-1}d(T^kX,Y_k)\Big)\le \e.$$
Since by definition of $I(X,Y)$ we have $I(X,Y)\le \log|f(\X)|=\log(N+1)$, then 
$$R_\mu(\e)=\inf \dfrac{1}{n}I(X,Y)\le \liminf_{n\to\infty}\dfrac{1}{n}\log \widetilde{N}(n,\e,\e/(2D)).$$
If we construct $f$ using sets well approximated by $(n,\e)$-dynamical balls, i.e. in the construction above  we take $K$ to satisfy $N(K,n,\frac{\e}{2D})=N_\mu(n,\e,\frac{\e}{2D})$, we obtain the analogous bound  
$$R_\mu(\e)\le \liminf_{n\to\infty}\dfrac{1}{n}\log N(n,\e,\e/(2D)).$$
Finally use Remark \ref{rem2} to conclude.
\end{proof}

The following theorem is an equivalent version of the Source Coding Theorem, which is a fundamental result in Information theory. We will use Theorem \ref{rd} in the proof of Proposition \ref{prop2}. For a proof and history about this theorem we refer the reader to \cite{G}. 

\begin{theorem}\label{rd} Let $X:(\Omega,\P)\to (\X,\mu)$ be a measurable function with $X_*\P=\mu$, an ergodic $T$-invariant probability measure on $\X$. Given $\e_0>0$, there exists $n(\e_0)\in \N$ such that the following holds. For each $n\ge n(\e_0)$, there exists a measurable function $f_n:\X\to \X^n$  such that the pair $(X,f_n\circ X)$ satisfies condition $(*)_{n,\e+\e_0}$   and $|f_n(\X)|\le e^{n(R_\mu(\e)+\e_0)}$.
\end{theorem}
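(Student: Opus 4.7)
The strategy is a block coding argument, which is a classical technique in rate-distortion theory whose details for stationary ergodic sources can be found in Gray \cite{G}.

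First, by the very definition of $R_\mu(\e)$, for any $\eta>0$ there exist an integer $m$ and a pair $(\tilde X,\tilde Y)$ satisfying $(\ast)_{m,\e,\mu}$ with $\frac{1}{m} I(\tilde X,\tilde Y) \le R_\mu(\e)+\eta$. I fix $\eta=\e_0/4$ and let $\nu$ denote the marginal distribution of $\tilde Y$ on $\X^m$. This provides a prototype joint law at block length $m$ with small expected distortion and controlled mutual information rate.

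Second, for large $n$ write $n=qm+r$ with $0\le r<m$, so that an orbit $(x,Tx,\dots,T^{n-1}x)$ splits into $q$ blocks of length $m$ plus a tail of length $r$. The encoder $f_n$ will match each $m$-block (starting at $T^{jm}x$) to the closest element of a finite codebook $\cC\subset \X^m$ in the $\widetilde d_m$-sense, and place a fixed dummy word on the tail of length $r$. The tail contributes at most $mD/n$ to the average distortion, where $D=\diam(\X,d)$, which is absorbed into $\e_0$ once $n\ge n(\e_0)$ is large enough. Since $|f_n(\X)|\le |\cC|^q$, it suffices to exhibit $\cC$ with $|\cC|\le \exp(m(R_\mu(\e)+\e_0/2))$ achieving expected per-block distortion at most $\e+\e_0/2$.

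Third, $\cC$ is produced by random coding. Draw $N=\lceil\exp(m(R_\mu(\e)+\e_0/2))\rceil$ i.i.d.\ samples from $\nu$ and apply the jointly typical set argument: by the Shannon--McMillan--Breiman theorem applied to $\tilde X$ and to the pair $(\tilde X,\tilde Y)$, and because $N$ exceeds $\exp(I(\tilde X,\tilde Y)+\eta/2)$, with probability close to one every typical source block is jointly typical with at least one codeword, giving expected block distortion within $\e_0/4$ of $\e$. Ergodicity of $\mu$ under $T$ combined with Birkhoff's ergodic theorem guarantees that the fraction of blocks along a generic orbit of length $n$ that are jointly typical with some codeword tends to one. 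Derandomization (pigeonholing on the random draw) then extracts a deterministic $\cC$, and $f_n$ is defined as the associated blockwise nearest-neighbor encoder.

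The principal obstacle is reconciling the block structure with the orbit structure: $\mu$ need not be ergodic under $T^m$, so the typical-set argument must be carried out on each ergodic component of $T^m$ (at most $m$ of them, cyclically permuted by $T$) and then reassembled using the fact that a $T$-orbit visits these components in the correct proportions. A secondary technicality is extending the jointly typical machinery from the finite-alphabet setting to general measurable $\tilde X,\tilde Y$: one approximates $\tilde Y$ by a finite-valued random variable whose image has cardinality close to $\exp(I(\tilde X,\tilde Y))$, using the definition of mutual information as a supremum over finite partitions (Section 5.5 of \cite{G}), at the cost of only a negligible increase in both distortion and mutual information.
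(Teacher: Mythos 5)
The paper does not prove Theorem \ref{rd}: the authors state it as an equivalent formulation of the Source Coding Theorem and refer the reader to Gray \cite{G} for its proof and history. There is therefore no in-paper argument to compare against; the relevant benchmark is the standard textbook proof from information theory.

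Your sketch follows the right strategic outline (fix a good $m$-block prototype pair, decompose $n=qm+r$, random coding, handle $T^m$-ergodic components, discretize for general alphabets), and you correctly flag both technical obstacles. The place where the argument does not close up is the central random-coding step. You propose coding each $m$-block independently against a single codebook $\cC\subset\X^m$ of size $e^{m(R_\mu(\e)+\e_0/2)}$, and you justify the covering by ``joint typicality.'' But joint typicality is an asymptotic statement about \emph{sequences} of symbols, giving a covering probability $\approx e^{-mI}$ only after an AEP-type concentration of information density; for one fixed block length $m$ and an arbitrary joint law of $(\tilde X,\tilde Y)$ on $\X^m\times\X^m$, no such concentration is available unless that joint law has single-letter product structure, and for a $T$-orbit it does not. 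In the classical treatment (Gallager, Berger, Gray \cite{G}) one codes a $q$-tuple of $m$-blocks jointly, so the codebook lives in $(\X^m)^q$, and the Shannon--McMillan--Breiman theorem is applied to the block \emph{process}; it is this process-level argument, combined with the $T^m$-ergodic decomposition you mention, that actually produces the required covering estimate. Since the theorem is a cited textbook result, this is not an objection to your strategic understanding; but as a proof the sketch has a genuine gap at this step, and the rest (tail block, discretization, derandomization) is standard bookkeeping around that missing core.
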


The following definition has similar content than Definition \ref{cond} but will help to simplify the language used in the proof of Proposition \ref{prop2}.
\begin{definition} A map $Z$ is called a $(n,\epsilon)$-approximation of $(\X,T)$ if $Z:\X\to\X^n$ is a measurable map with finite image  and  it satisfies $$\int_\X \frac{1}{n}\sum_{k=0}^{n-1} d(T^k(x), Z_k(x))d\mu(x)\le \epsilon.$$ 
\end{definition}

\begin{lemma}\label{lem} Given $Z$ a $(n,\epsilon)$-approximation of $(\X,T)$, we can find a $(n,2\epsilon)$-approximation of $(\X,T)$, say  $Z'=(Z'_0,...,Z'_{n-1})$, such that $Z'_k=T^kZ'_0$. Moreover the preimage partitions of $Z$ and $Z'$ coincide. 
\end{lemma}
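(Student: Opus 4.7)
The plan is to take $Z'_0$ to be piecewise constant along the atoms of the preimage partition of $Z$, and then set $Z'_k := T^k \circ Z'_0$ as the statement requires. Write $\{A_1, \ldots, A_N\}$ for the preimage partition of $Z$, and let $z^i = (z^i_0, \ldots, z^i_{n-1}) \in \X^n$ denote the constant value that $Z$ takes on $A_i$. For each $i$ I would pick a single point $w_i \in A_i$ and declare $Z'_0 \equiv w_i$ on $A_i$. Because the $A_i$ are pairwise disjoint, the chosen $w_i$ are automatically distinct, so the preimage partition of $Z'_0$ (and hence of $Z'$, which depends only on $Z'_0$) equals $\{A_1, \ldots, A_N\}$. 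This takes care of the ``preimage partitions coincide'' claim for any choice of representatives.

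To obtain the $2\epsilon$ distortion bound I would choose the $w_i$ in a controlled way. Introduce the non-negative function
\[
\phi(x) = \frac{1}{n}\sum_{k=0}^{n-1} d(T^k x, Z_k(x)),
\]
so that $\int_\X \phi\, d\mu \le \epsilon$ by hypothesis. On each atom of positive measure, a non-negative function must take some value $\le$ its average over that atom, so I can pick $w_i \in A_i$ with $\phi(w_i) \le \frac{1}{\mu(A_i)} \int_{A_i}\phi\, d\mu$ (for atoms of measure zero any $w_i \in A_i$ works).

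The distortion estimate then follows from the triangle inequality atom by atom, using the key identity $z^i_k = Z_k(w_i)$ which holds since $w_i \in A_i$:
\[
d(T^k x, T^k w_i) \le d(T^k x, z^i_k) + d(Z_k(w_i), T^k w_i).
\]
Averaging over $k$, integrating over $x \in A_i$, and summing over $i$, the first term contributes $\int_\X \phi\, d\mu \le \epsilon$ by the hypothesis on $Z$, while the second contributes $\sum_i \mu(A_i)\phi(w_i) \le \sum_i \int_{A_i}\phi\, d\mu = \int_\X \phi\, d\mu \le \epsilon$ by the choice of $w_i$. Adding the two yields the required bound of $2\epsilon$.

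I do not anticipate a genuine obstacle here: the only mildly subtle point is making the preimage partitions coincide exactly, and this is forced by the constraint $w_i \in A_i$, which is compatible with the averaging argument since we still have full freedom to choose $w_i$ within its own atom.
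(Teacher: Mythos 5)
Your proof is correct and follows essentially the same route as the paper: select one representative per atom of the preimage partition whose local distortion is at most the atom-average (via the markov/averaging argument), define $Z'_0$ to be constant equal to that representative on each atom, and use the triangle inequality to split the distortion of $Z'$ into the distortion of $Z$ plus the distortion at the chosen representatives, each bounded by $\epsilon$. Your explicit handling of measure-zero atoms and the observation that disjointness of the atoms forces the $w_i$ to be distinct (so the preimage partitions match exactly) are small clarifications, but the argument is the same one the paper gives.
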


\begin{proof} Denote by $\Q=\{Q_1,...,Q_T\}$ the preimage partition of $Z$, this means that $Z_{|Q_j}\equiv z_j=(z_{j,0},...,z_{j,n-1})\in \X^n$. Then we have $$\sum_{k=1}^T\int_{Q_k}\frac{1}{n}\sum_{i=0}^{n-1}d(T^i(x), z_{k,i})d\mu(x)\le \epsilon.$$
To simplify notation define $g_k:Q_k\to \R$ as $g_k(x)=\frac{1}{n}\sum_{i=0}^{n-1}d(T^i(x), z_{k,i})$. Therefore the above inequality can be writen as $$\sum_{k=1}^T \int_{Q_k}g_k(x)d\mu(x)\le \epsilon.$$ 
Let $a_k:= \int_{Q_k}g_k(x)d\mu(x)$. Assume $a_k\ne 0$ for some $k$, we claim that there exists $x_k\in Q_k$ such that $g_k(x_k)\le a_k/\mu(Q_k)$. If $g_k(x)> a_k/\mu(Q_k)$ for all $x\in Q_k$, then integrating over $Q_k$ will lead to a contradiction by the definition of $a_k$. If $a_k=0$, we pick any element in $Q_k$  and call it $x_k$. We define $Z'_i(x)=T^i(x_k)$ whenever $x\in Q_k$. By construction the preimage partions of $Z$ and $Z'$ coincide and 
\begin{align*}\int_{Q_k}\frac{1}{n}\sum_{i=0}^{n-1}d(T^i(x), T^i(x_k))d\mu(x)&\le \int_{Q_k}\frac{1}{n}\sum_{i=0}^{n-1}d(T^i(x), z_{k,i})d\mu(x)+\mu(Q_k)g_k(x_k)\\
& \le 2\epsilon.
\end{align*}

\end{proof}

\begin{proposition}\label{prop2} Let $\mu$ be an ergodic $T$-invariant probability measure. The following inequality holds 

$$\widetilde{h}_\mu(4L\epsilon, T,1/L)\le  R_\mu(\epsilon).$$

\end{proposition}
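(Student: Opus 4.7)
The plan is to combine the Source Coding Theorem (Theorem \ref{rd}) with Lemma \ref{lem} and a Markov-type inequality in order to convert an efficient approximation of $X$ by a random variable with few values into a small cover of a large-measure set by average dynamical balls of radius $4L\epsilon$.

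First, I would fix $\epsilon_0 > 0$ small, to be chosen momentarily, and apply Theorem \ref{rd}: for every $n \ge n(\epsilon_0)$ there is a measurable $f_n:\X \to \X^n$ such that $(X, f_n\circ X)$ satisfies $(*)_{n,\epsilon+\epsilon_0}$ and $|f_n(\X)| \le e^{n(R_\mu(\epsilon)+\epsilon_0)}$. Viewing $f_n$ itself as a map $Z:\X\to\X^n$ with finite image, this is precisely an $(n,\epsilon+\epsilon_0)$-approximation of $(\X,T)$ in the sense used in the previous proofs. Next, applying Lemma \ref{lem} to $Z$ produces an $(n,2(\epsilon+\epsilon_0))$-approximation $Z' = (Z'_0, T Z'_0, \dots, T^{n-1}Z'_0)$ whose preimage partition coincides with that of $Z$; in particular $|Z'_0(\X)| \le |f_n(\X)| \le e^{n(R_\mu(\epsilon)+\epsilon_0)}$, and
\[
\int_\X \widetilde{d}_n(x, Z'_0(x))\, d\mu(x) \;=\; \int_\X \frac{1}{n}\sum_{k=0}^{n-1} d(T^k x, T^k Z'_0(x))\, d\mu(x) \;\le\; 2(\epsilon+\epsilon_0).
\]

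Now I would use Markov's inequality on the nonnegative function $x \mapsto \widetilde{d}_n(x, Z'_0(x))$. Setting $A_n = \{x\in\X : \widetilde{d}_n(x, Z'_0(x)) \ge 4L\epsilon\}$ one obtains $\mu(A_n) \le \frac{2(\epsilon+\epsilon_0)}{4L\epsilon} = \frac{\epsilon+\epsilon_0}{2L\epsilon}$. Choosing any $\epsilon_0 < \epsilon$ makes $\mu(A_n) < \tfrac{1}{L}$, so $\mu(\X\setminus A_n) > 1 - \tfrac{1}{L}$. For each $x \in \X\setminus A_n$ the point $Z'_0(x)$ lies in a set of cardinality at most $e^{n(R_\mu(\epsilon)+\epsilon_0)}$, and $x$ lies in the $(n,4L\epsilon)$-average dynamical ball centered at $Z'_0(x)$; hence
\[
\widetilde{N}_\mu\bigl(n, 4L\epsilon, \tfrac{1}{L}\bigr) \;\le\; |Z'_0(\X)| \;\le\; e^{n(R_\mu(\epsilon)+\epsilon_0)}.
\]

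Taking $\frac{1}{n}\log$ and $\limsup_{n\to\infty}$ yields $\widetilde{h}_\mu(4L\epsilon, T, 1/L) \le R_\mu(\epsilon)+\epsilon_0$, and sending $\epsilon_0 \to 0$ gives the desired inequality. The only delicate point is the choice of $\epsilon_0$: one must verify that $2(\epsilon+\epsilon_0)/(4L\epsilon)$ can be made strictly less than $1/L$, which is exactly why the factor $4L$ (as opposed to just $2L$) appears in the statement; this is the main constraint, and no other obstruction arises since the rest is just the Source Coding Theorem plus Markov.
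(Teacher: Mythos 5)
Your proof is correct and follows essentially the same route as the paper: Source Coding Theorem to obtain $f_n$, Lemma \ref{lem} to replace it by an orbit-shaped approximation $Z'$ with the same partition, then Markov's inequality to convert the average-distortion bound into a cover of a set of measure $>1-1/L$ by $(n,4L\epsilon)$-average balls centered at $Z'_0(\X)$. The only cosmetic difference is that you apply Markov directly with threshold $4L\epsilon$ (requiring $\epsilon_0<\epsilon$), whereas the paper uses threshold $2L(\epsilon+\epsilon_0)$ and invokes monotonicity of $\widetilde N_\mu$ in the radius at the end; the two bookkeeping choices are equivalent.
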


\begin{proof} 




Fix $\e_0>0$ arbitrarily small. For $n\ge n(\e)$ consider the map  $f_{n}=(f_{n,0},...,f_{n,n-1}):\X\to \X^n$ provided by Theorem \ref{rd}. Since $(X,f_n\circ X)$ satisfies condition $(*)_{n,\epsilon+\e_0}$ we have the inequality $$\int_\X \frac{1}{n}\sum_{k=0}^{n-1}d(T^k(x),f_{n,k}(x))d\mu(x)\le \epsilon+\e_0.$$
In particular each map $f_n$ is a $(n,\epsilon+\e_0)$-approximation of $(\X,T)$. Moreover 
\begin{align*} \frac{1}{n}\log|f_n(\X)|\le & R_\mu(\epsilon)+\e_0.
\end{align*}

Using Lemma \ref{lem} we find maps $\widetilde{f}_n:\X\to\X^n$, each being a $(n,2\epsilon+2\e_0)$-approximation of $(\X,T)$ with the additional property that $\widetilde{f}_{n,k}=T^k\widetilde{f}_{n,0}$. If we define $g_n:=\widetilde{f}_{n,0}$, then $\widetilde{f}_n=(g_n,Tg_n,...,T^{n-1}g_n)$. Observe that $H_\mu(f_n)=H_\mu(\widetilde{f}_n)$ and $|f_n(\X)|=|\widetilde{f}_n(\X)|=|g_n(\X)|$, in particular $$ \frac{1}{n}\log|g_n(\X)|\le  R_\mu(\epsilon)+\e_0.$$
We have now all the ingredients to relate $R_\mu(\epsilon)$ with the quantity $\widetilde{h}_\mu(2L\e,T,1/L)$. First notice that 
$$\int_\X \widetilde{d}_n(x,g_n(x) ) d\mu(x) = \int_\X \dfrac{1}{n}\sum_{k=0}^{n-1} d(T^k(x),T^kg_n(x))d\mu(x)\le 2\epsilon+2\e_0.$$
Define $\c=g_n(\X)\subset \X$ and set $A=\bigcup_{p\in \c} \widetilde{B}_n(p,2L(\epsilon+\e_0))$. Observe that if $x\in \X\setminus A$ then $\widetilde{d}_n(x,g_n(x))\ge 2L(\epsilon+\e_0)$, this immediately implies $\mu(\X\setminus A)\le 1/L$ or equivalently $\mu(A)\ge 1-\frac{1}{L}$. By construction the set $A$ can be covered by $|\c|=|g_n(\X)|$ $(n,2L(\epsilon+\e_0))$-average dynamical balls. This implies $$\dfrac{1}{n}\log \widetilde{N}_\mu(n,2L(\epsilon+\e_0),1/L)\le \dfrac{1}{n}\log |g_n(\X)|\le R_\mu(\epsilon)+\e_0,$$
for each $n>n(\e_0)$. This implies 
$$\limsup_{n\to \infty} \dfrac{1}{n}\log \widetilde{N}_\mu(n,2L(\epsilon+\e_0),1/L)\le  R_\mu(\epsilon)+\e_0,$$
for each $\e_0>0$. Taking $\e_0\to 0$,
$$\limsup_{n\to \infty} \dfrac{1}{n}\log \widetilde{N}_\mu(n,4L\e,1/L)\le \limsup_{n\to \infty} \dfrac{1}{n}\log \widetilde{N}_\mu(n,2L(\epsilon+\e_0),1/L)\le   R_\mu(\epsilon)+\e_0,$$
we get
$$\widetilde{h}_\mu(4L\e,T,1/L)=\limsup_{n\to \infty} \dfrac{1}{n}\log \widetilde{N}_\mu(n,4L\e,1/L)\le R_\mu(\epsilon).$$
\end{proof}

We can summarize Proposition \ref{prop1} and Proposition \ref{prop2} in the following result.

\begin{theorem} \label{teo1} Let $(\X,d)$ be a compact metric space of diameter $D$ and $T:\X\to \X$ a continous transformation. Let $\mu$ be an ergodic $T$-invariant probability measure, then we have 
 $$\widetilde{h}_\mu(4L\epsilon,T,1/L)\le R_\mu(\epsilon)\le \widetilde{h}_\mu(\epsilon,T,\e/2D).$$
\end{theorem}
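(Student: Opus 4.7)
The plan is almost trivial: Theorem \ref{teo1} is literally the concatenation of the two inequalities just proven. The right-hand bound $R_\mu(\e)\le \widetilde{h}_\mu(\e,T,\e/2D)$ is the first assertion of Proposition \ref{prop1}, and the left-hand bound $\widetilde{h}_\mu(4L\e,T,1/L)\le R_\mu(\e)$ is Proposition \ref{prop2}. So the formal proof is a one-line citation of these two results. Nothing further is needed under the stated hypotheses that $(\X,d)$ is compact with diameter $D$ and $\mu$ is ergodic, since those are exactly the hypotheses used in the two propositions.

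If one wanted to repackage the argument in a self-contained way, here is how I would organize it. For the upper bound on $R_\mu(\e)$, I would produce an admissible pair $(X,Y)$ directly: fix $n$, choose a set $K$ with $\mu(K)>1-\e/2D$ covered by $\widetilde{N}_\mu(n,\e,\e/2D)$ average dynamical balls, let $f:\X\to \X$ collapse each $x\in K$ to the center of a covering ball and send $\X\setminus K$ to a sentinel point, and set $Y_k=T^k\circ f\circ X$. The average distortion decomposes into a contribution of at most $\e/2$ from $K$ and at most $(\e/2D)\cdot D=\e/2$ from its complement, while $I(X,Y)\le H_\mu(f)\le \log|f(\X)|$ supplies the entropy bound.

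For the lower bound, I would invoke the Source Coding Theorem (Theorem \ref{rd}) to obtain, for each $\e_0>0$ and $n\ge n(\e_0)$, a coder $f_n$ with $|f_n(\X)|\le e^{n(R_\mu(\e)+\e_0)}$ and expected distortion $\le \e+\e_0$, then straighten it to shift-coherent form $\widetilde{f}_n=(g_n,Tg_n,\ldots,T^{n-1}g_n)$ using Lemma \ref{lem} at the cost of doubling the distortion. A Markov inequality argument then shows that the set of $x$ whose $\widetilde{d}_n$-distance to the nearest image of $g_n$ exceeds $2L(\e+\e_0)$ has mass at most $1/L$, so the images of $g_n$ furnish an $(n,2L(\e+\e_0))$-average dynamical cover of a set of measure $\ge 1-1/L$.

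The only point needing a whisper of care is the passage from $2L(\e+\e_0)$ to $4L\e$ as $\e_0\to 0$: this uses the obvious monotonicity of $\widetilde{h}_\mu(\cdot,T,\delta)$ in its first argument, since for sufficiently small $\e_0$ we have $2L(\e+\e_0)<4L\e$, and larger balls can only reduce the required covering number. There is no genuine obstacle here; the hard analytic work was in Propositions \ref{prop1} and \ref{prop2}, and the present theorem is their formal summary.
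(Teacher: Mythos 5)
Your proof is correct and takes exactly the same route as the paper: the theorem is stated immediately after the line "We can summarize Proposition \ref{prop1} and Proposition \ref{prop2} in the following result," and the paper offers no further argument, just the concatenation you describe. Your optional self-contained recapitulation of the two propositions is also faithful to the paper's own proofs of those results.
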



Theorem \ref{teo1} will be used in the proof of Theorem \ref{theo2}. 

\begin{definition} Given $x\in\X$, $n\ge0$ and $r\in (0,1)$ we define
$$B'_n(x,\e,r)=\{y\in X: \dfrac{1}{n}\#\{0\le i\le n-1:d(T^ix,T^iy)\le \e\}\ge 1-r\}.$$
Define $N_\mu(n,\e,\delta,r)$ as the minimum number of balls $B'_n(x,\e,r)$ needed to cover a set of measure bigger or equal than $\delta$.
\end{definition}
The following proposition follows from the main result of \cite{ZZC}. 

\begin{proposition} \label{prop3} Let $\mu$ be an ergodic $T$-invariant probability measure. Then for each $\delta\in (0,1)$ we have the inequality
$$h_\mu(T)\le \lim_{r\to 0}\lim_{\e\to 0}\limsup_{n\to \infty} \dfrac{1}{n}\log N_\mu(n,\e,\delta,r).$$
If we moreover assume that $\X$ is compact, then the equality holds.
\end{proposition}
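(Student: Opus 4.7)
The plan is to deduce the proposition from a Hamming-ball analog of the Brin--Katok local entropy theorem, which is the main result of \cite{ZZC}. That statement asserts that for $\mu$-a.e.\ $x$ one has
$$\lim_{r\to 0}\lim_{\e\to 0}\liminf_{n\to \infty}-\dfrac{1}{n}\log\mu(B'_n(x,\e,r))\ge h_\mu(T),$$
with equality when $\X$ is compact. Once this pointwise bound is available, the covering-number estimate in the proposition will follow from a standard Katok-style counting argument, the only novelty being a ``Hamming triangle inequality'' that lets us replace the centers of the covering balls by points where the local bound holds.

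More concretely, given $\eta>0$ I would use Egorov to extract a measurable set $A\subset \X$ with $\mu(A)>1-\delta/2$ together with parameters $r_0$, $\e_0(r)$, $n_0$ such that $\mu(B'_n(x,\e,r))\le e^{-n(h_\mu(T)-\eta)}$ uniformly for $x\in A$, $r<r_0$, $\e<\e_0(r)$ and $n\ge n_0$. If $A'\subset \X$ has measure $\ge \delta$ and is covered by $N=N_\mu(n,\e/2,\delta,r/2)$ balls $B'_n(x_i,\e/2,r/2)$, I would split the indices according to whether $B'_n(x_i,\e/2,r/2)$ meets $A$ or not; the balls that miss $A$ contribute total measure at most $\mu(\X\setminus A)<\delta/2$. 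For a ball that meets $A$, pick $y_i\in A\cap B'_n(x_i,\e/2,r/2)$; the inclusion
$$B'_n(x_i,\e/2,r/2)\subset B'_n(y_i,\e,r),$$
which comes from intersecting the two ``good'' index sets (each of density at least $1-r/2$, so their intersection has density at least $1-r$) and applying the triangle inequality on that intersection, gives $\mu(B'_n(x_i,\e/2,r/2))\le e^{-n(h_\mu(T)-\eta)}$. Summing over all indices yields $\delta\le Ne^{-n(h_\mu(T)-\eta)}+\delta/2$, hence $\frac{1}{n}\log N_\mu(n,\e/2,\delta,r/2)\ge h_\mu(T)-\eta-o(1)$. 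Letting $n\to \infty$ first, then $\e,r\to 0$, then $\eta\to 0$ produces the claimed inequality without any compactness hypothesis.

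For the equality when $\X$ is compact the reverse inequality comes essentially for free: the inclusion $B_n(x,\e)\subset B'_n(x,\e,r)$ shows that any Bowen cover is also a Hamming cover, so $N_\mu(n,\e,\delta,r)\le N_\mu(n,\e,\delta')$ for every $\delta'\in(0,1-\delta)$, where on the right one uses the Bowen-ball covering number of Definition \ref{ent} (whose measure threshold $1-\delta'$ is then strictly greater than $\delta$). Theorem \ref{kat} gives $\lim_{\e\to 0}\limsup_n \tfrac{1}{n}\log N_\mu(n,\e,\delta')=h_\mu(T)$, and combining this with the inequality from the previous paragraph produces equality.

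The main obstacle in this program is importing the Hamming Brin--Katok estimate from \cite{ZZC}. Hamming balls are substantially larger than Bowen balls, so no soft inclusion can give the local entropy lower bound: the proof in \cite{ZZC} requires a genuine combinatorial argument, typically a Shannon--McMillan--Breiman step combined with a Chebyshev-type selection of the ``bad'' coordinate set inside the orbit. Once that ingredient is granted, the remaining work is the elementary Egorov-plus-triangle-inequality covering argument outlined above.
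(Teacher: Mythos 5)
The paper itself gives no proof of this proposition; it only remarks that the statement ``follows from the main result of \cite{ZZC}.'' What you have written is exactly the kind of deduction the authors intended but suppressed, and it is correct. The key ingredients all check out: the ZZC pointwise bound $\lim_{r\to 0}\lim_{\e\to 0}\liminf_n -\tfrac1n\log\mu(B'_n(x,\e,r))\ge h_\mu(T)$ a.e., the Hamming triangle inclusion $B'_n(x,\e/2,r/2)\subset B'_n(y,\e,r)$ for $y\in B'_n(x,\e/2,r/2)$ (intersect the two good-coordinate sets of density $\ge 1-r/2$), and the counting step $\delta\le N e^{-n(h_\mu(T)-\eta)}+\delta/2$, which gives $\tfrac1n\log N_\mu(n,\e/2,\delta,r/2)\ge h_\mu(T)-\eta+o(1)$. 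One presentational simplification: since $N_\mu(n,\e,\delta,r)$ is non-increasing in both $\e$ and $r$, so is $\limsup_n\tfrac1n\log N_\mu(n,\e,\delta,r)$, hence the double limit in $r,\e$ is a supremum; you only need a \emph{single} pair $(\e,r)$ together with $n_0$ and a set $A$ of measure $>1-\delta/2$ on which the pointwise bound is uniform. A three-stage exhaustion (first choose $r$, then $\e$, then $n_0$, using monotone convergence of the corresponding sets) delivers one such pair and avoids any nested Egorov over ranges of $\e$ and $r$. For the reverse inequality in the compact case, your reduction via $B_n(x,\e)\subset B'_n(x,\e,r)$ and Katok's theorem is correct, and you correctly account for the threshold mismatch between ``covers measure $\ge\delta$'' in the Hamming count and ``covers measure $>1-\delta'$'' in Definition \ref{ent} by taking $\delta'\in(0,1-\delta)$.
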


An easy application of Proposition \ref{prop3} is the following result.

\begin{proposition} \label{prop4} Let  $\mu$  be an ergodic $T$-invariant probability measure. Then  for every $\delta\in (0,1)$ we have
$$h_\mu(T)\le \widetilde{h}_\mu(T,\delta).$$
\end{proposition}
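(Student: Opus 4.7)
The plan is to deduce this from Proposition \ref{prop3} by comparing average dynamical balls with the dynamical balls $B'_n(x,\e,r)$. The key geometric observation is that a small average dynamical ball is contained in a $B'_n$-ball of slightly larger parameters, because if the time average of $d(T^i x, T^i y)$ is small, then the set of indices where $d(T^i x, T^i y)$ is large must have small density. This is essentially a Markov inequality.

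First I would establish the inclusion $\widetilde{B}_n(x,r\e) \subset B'_n(x,\e,r)$ for any $r \in (0,1)$ and $\e > 0$. If $y \in \widetilde{B}_n(x,r\e)$, setting $B = \{0 \le i \le n-1 : d(T^i x, T^i y) > \e\}$, one has
$$r\e \cdot n \;\ge\; \sum_{i=0}^{n-1} d(T^i x, T^i y) \;\ge\; \sum_{i \in B} d(T^i x, T^i y) \;>\; |B| \cdot \e,$$
so $|B|/n < r$, which shows $y \in B'_n(x,\e,r)$. As an immediate consequence, every cover of a set $K$ by $(n,r\e)$-average dynamical balls yields a cover of $K$ by $B'_n$-balls with the same centers. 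Recalling that $\widetilde{N}_\mu(n,\e,\delta)$ counts covers of sets of measure strictly greater than $1-\delta$, while $N_\mu(n,\e,\delta',r)$ counts covers of sets of measure at least $\delta'$, the choice $\delta' = 1-\delta$ gives
$$N_\mu(n,\e,1-\delta,r) \;\le\; \widetilde{N}_\mu(n,r\e,\delta),$$
for every $n \ge 1$, $\e > 0$ and $r \in (0,1)$.

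Next I would apply Proposition \ref{prop3} with the parameter $1-\delta \in (0,1)$ in place of $\delta$, obtaining
$$h_\mu(T) \;\le\; \lim_{r\to 0}\lim_{\e \to 0}\limsup_{n\to\infty}\dfrac{1}{n}\log N_\mu(n,\e,1-\delta,r) \;\le\; \lim_{r\to 0}\lim_{\e \to 0}\limsup_{n\to\infty}\dfrac{1}{n}\log \widetilde{N}_\mu(n,r\e,\delta).$$
For each fixed $r$, the change of variables $\e' = r\e$ shows that the inner double limit equals $\widetilde{h}_\mu(T,\delta)$, which is independent of $r$; taking $r \to 0$ then gives $h_\mu(T) \le \widetilde{h}_\mu(T,\delta)$.

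I do not anticipate a serious obstacle here: the content of the argument is the Markov-type inclusion of balls, and everything else is bookkeeping on the two different measure thresholds ($>1-\delta$ versus $\ge \delta'$) together with an invocation of Proposition \ref{prop3}. The only thing to be careful about is matching the correct measure threshold when passing from $\widetilde{N}_\mu$ to $N_\mu$, which is why one applies Proposition \ref{prop3} with parameter $1-\delta$ rather than $\delta$.
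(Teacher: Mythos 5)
Your proof is correct and follows the same route as the paper: establish a Markov-type inclusion of average dynamical balls into the balls $B'_n(\cdot,\e,r)$ and then invoke Proposition \ref{prop3}. The one place where your write-up is actually \emph{more} careful than the paper is the measure-threshold bookkeeping. The paper's proof asserts $N_\mu(n,L\e,\delta,1/L)\le \widetilde N_\mu(n,\e,\delta)$ directly, which uses the same $\delta$ on both sides; since $\widetilde N_\mu$ covers a set of measure $>1-\delta$ while $N_\mu(\cdot,\cdot,\delta,\cdot)$ covers a set of measure $\ge\delta$, this comparison is immediate only when $\delta\le 1/2$. You instead compare against $N_\mu(n,\e,1-\delta,r)$ and then apply Proposition \ref{prop3} with parameter $1-\delta$, which matches the thresholds exactly and gives the claim for every $\delta\in(0,1)$ without a case split. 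So: same key lemma, same overall structure, with a small but genuine precision that patches the threshold mismatch in the paper's version.
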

 
\begin{proof}
Observe that if $y\in \widetilde{B}_n(x,\e)$, then $$\#\{0\le i\le n-1: d(T^ix,T^iy)>L\e\}\le n/L, $$ or equivalently $$\dfrac{1}{n}\#\{0\le i\le n-1: d(T^ix,T^iy)\le L\e\}\ge 1-\dfrac{1}{L} .$$ This implies that $ \widetilde{B}_n(x,\e)\subset B_n'(x,L\e,1/L)$, and therefore $$N_\mu(n,L\e,\delta,1/L)\le \widetilde{N}_\mu(n,\e,\delta).$$
By taking limits we get 
\begin{align*}
\lim_{\e\to 0}\limsup_{n\to\infty} \dfrac{1}{n}\log N_\mu(n,\e,\delta,r) \le& \lim_{\e\to 0}\limsup_{n\to\infty}\dfrac{1}{n}\log \widetilde{N}_\mu(n,\e,\delta)=\widetilde{h}_\mu(T,\delta).
\end{align*}
Finally using Proposition \ref{prop3} we get $$h_\mu(T)\le \widetilde{h}_\mu(T,\delta).$$
\end{proof}

We can finally prove Theorem \ref{theo2}.

\begin{corollary} Under the hypothesis of Theorem \ref{teo1} we have 
 $$\widetilde{h}_\mu(T,\delta)= \lim_{\epsilon\to 0} R_\mu(\epsilon)= h_\mu(T)$$
\end{corollary}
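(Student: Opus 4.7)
The plan is to close the circular chain of inequalities
\[
h_\mu(T)\;\le\;\widetilde{h}_\mu(T,\delta)\;\le\;\lim_{\e\to 0} R_\mu(\e)\;\le\; h_\mu(T),
\]
which forces all three quantities to coincide and, as a byproduct, shows that $\widetilde{h}_\mu(T,\delta)$ is independent of $\delta\in(0,1)$. Before starting, I would note that $\lim_{\e\to 0}R_\mu(\e)$ exists in $[0,\infty]$ because $R_\mu(\cdot)$ is manifestly nonincreasing: any pair $(X,Y)$ satisfying $(*)_{n,\e',\mu}$ with $\e'<\e$ also satisfies $(*)_{n,\e,\mu}$, so the infimum defining $R_\mu$ only shrinks as $\e$ grows.

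The leftmost inequality is exactly Proposition \ref{prop4}, so no new work is required there. For the middle inequality I would invoke the lower bound in Theorem \ref{teo1}, namely $\widetilde{h}_\mu(4L\e,T,1/L)\le R_\mu(\e)$, and specialize to $L=1/\delta$ to get $\widetilde{h}_\mu(4\e/\delta,T,\delta)\le R_\mu(\e)$. Letting $\e\to 0$ (and writing $\e':=4\e/\delta\to 0$ on the left) yields $\widetilde{h}_\mu(T,\delta)\le \lim_{\e\to 0}R_\mu(\e)$ by the very definition of $\widetilde{h}_\mu(T,\delta)=\lim_{\e'\to 0}\widetilde{h}_\mu(\e',T,\delta)$.

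For the rightmost inequality I would appeal to the second bound of Proposition \ref{prop1}, which asserts $R_\mu(\e)\le h_\mu(\p,T)$ for any partition $\p$ of diameter less than $\e$. Since $\X$ is compact such finite partitions exist for every $\e>0$, and by definition of the measure theoretic entropy $h_\mu(\p,T)\le h_\mu(T)$. Hence $R_\mu(\e)\le h_\mu(T)$ uniformly in $\e$, and the estimate trivially passes to the limit.

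The argument is essentially bookkeeping; all the analytic content has been deposited in Theorem \ref{teo1} and Proposition \ref{prop4}. Accordingly there is no serious obstacle, only one point worth double-checking carefully: the change of variable $\e\mapsto 4\e/\delta$ in the middle step, to confirm that both sides of the Theorem \ref{teo1} inequality really do converge to the intended quantities as $\e\to 0$ with $\delta$ held fixed.
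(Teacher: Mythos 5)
Your proof is correct and follows essentially the same route as the paper's: you close the same cycle $h_\mu(T)\le \widetilde{h}_\mu(T,\delta)\le\lim_{\e\to0}R_\mu(\e)\le h_\mu(T)$ using Proposition~\ref{prop4} for the first step, the lower bound of Theorem~\ref{teo1} (with $L=1/\delta$) for the second, and the second assertion of Proposition~\ref{prop1} for the third. The change of variable $\e\mapsto 4\e/\delta$ in the middle step is handled exactly as the paper intends, so nothing is missing.
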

\begin{proof} Combining Theorem \ref{teo1} and Proposition \ref{prop1} we get   $$\widetilde{h}_\mu(T,\delta)\le \lim_{\epsilon\to 0} R_\mu(\epsilon)\le  h_\mu(T).$$
Then Proposition \ref{prop4} gives the equality of the three quantities.
\end{proof}

We will now prove a version of the variational principle for the average dynamical distances. The proof follows closely the proof of Lemma \ref{1111} with minor modifications. For completeness we explain it in detail. Define $\widetilde{N}_d(n,\e)$ as the maximal cardinality of a $(n,\e)$-average separated set in $(\X,d)$ and $$\widetilde{S}(\X,d,\e)=\limsup_{n\to \infty}\dfrac{1}{n}\log \widetilde{N}_d(n,\e).$$ 
In case the dynamical system has been specified, we will frequently use the simplified notation $\widetilde{S}(\e)=\widetilde{S}(\X,d,\e)$. 

\begin{lemma} \label{2222} Assume $(\X,d)$ is compact and $T$ is injective. Given $\e>0$ and $\delta\in (0,1/4)$, there exists a $T$-invariant probability measure $\mu_\e$ such that 
$$ \widetilde{h}_{\mu_\e}(\e,T,\delta)\ge  \widetilde{S}\Big(\dfrac{2\e}{1-4\delta}\Big)-3\delta  \widetilde{S}\Big(\dfrac{\e}{1-4\delta}\Big).$$
\end{lemma}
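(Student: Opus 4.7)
The proof follows closely that of Lemma \ref{1111}; I describe only the modifications forced by passing from the max metric $d_n$ to the average metric $\widetilde{d}_n$. Set $\tilde{\e}:=2\e/(1-4\delta)$ and $\alpha:=\e/(1-4\delta)$, so $\tilde{\e}=2\alpha>2\e$ for $\delta\in(0,1/4)$. The construction of $\mu_\e$ proceeds as before, replacing $(n,\e)$-separated sets with maximal $(n,\tilde{\e})$-average separated sets $E_n$ of cardinality $\widetilde{N}_d(n,\tilde{\e})$, and extracting a subsequence $\{n_k\}$ along which $\overline{\sigma}_{n_k}\to\mu_\e$ weakly, $\frac{1}{n_k}\log|E_{n_k}|\to \widetilde{S}(\tilde{\e})$, and $n_k/k\to\infty$. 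Picking (as in Lemma \ref{1111}) an open set $K$ with $\mu_\e(K)>1-\delta$ realizing $\widetilde{N}(K,n_k,\e)=\widetilde{N}_{\mu_\e}(n_k,\e,\delta)$ and applying the same matrix-counting pigeonhole produces $m_k\in[[n_k/k],\,n_k-[n_k/s]-1]$ (with $s\in(1/(1-2\delta-1/k),\,1/(1-3\delta))$) together with an index set $A$ of size $|A|\ge \delta\,\widetilde{N}_d(n_k,\tilde{\e})/\widetilde{N}_d(m_k,\alpha)$ satisfying $T^{m_k}x_i\in K$ and $\widetilde{d}_{m_k}(x_i,x_j)\le \tilde{\e}$ for all $i,j\in A$; here one uses that $\X$ is covered by $\widetilde{N}_d(m_k,\alpha)$ sets of $\widetilde{d}_{m_k}$-diameter at most $2\alpha$.

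The new ingredient replaces the max-metric monotonicity $d_{n_k}\ge d_{n_k-m_k}$, which fails for averages. Splitting the Birkhoff sum at $m_k$ yields the identity
\[
n_k\,\widetilde{d}_{n_k}(x_i,x_j) \;=\; m_k\,\widetilde{d}_{m_k}(x_i,x_j) + (n_k-m_k)\,\widetilde{d}_{n_k-m_k}(T^{m_k}x_i,T^{m_k}x_j),
\]
which, combined with the bounds on $\widetilde{d}_{n_k}(x_i,x_j)$ and $\widetilde{d}_{m_k}(x_i,x_j)$ above, gives $\widetilde{d}_{n_k-m_k}(T^{m_k}x_i,T^{m_k}x_j)>\tilde{\e}$ for distinct $i,j\in A$. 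Promoting this to separation in $\widetilde{d}_{n_k}$ via the nonnegativity estimate
\[
\widetilde{d}_{n_k}(T^{m_k}x_i,T^{m_k}x_j)\;\ge\; \frac{n_k-m_k}{n_k}\,\widetilde{d}_{n_k-m_k}(T^{m_k}x_i,T^{m_k}x_j) \;>\; \frac{n_k-m_k}{n_k}\,\tilde{\e},
\]
and observing that $(n_k-m_k)/n_k > 1/s > 1-3\delta > 1-4\delta = 2\e/\tilde{\e}$, the right-hand side strictly exceeds $2\e$. Hence each $(n_k,\e)$-average ball contains at most one of the points $\{T^{m_k}x_i:i\in A\}\subset K$, so $\widetilde{N}_{\mu_\e}(n_k,\e,\delta)\ge |A|$. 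Taking logarithms, dividing by $n_k$, and using $m_k/n_k<3\delta+1/n_k$ exactly as in Lemma \ref{1111}, one obtains
\[
\widetilde{h}_{\mu_\e}(\e,T,\delta)\;\ge\; \widetilde{S}\!\left(\frac{2\e}{1-4\delta}\right) - 3\delta\,\widetilde{S}\!\left(\frac{\e}{1-4\delta}\right).
\]

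The main obstacle is the widening of the separation scale in the statement. If one argued directly with $\widetilde{d}_{n_k-m_k}$ and divided by $n_k-m_k$, the factor $m_k/(n_k-m_k)<s-1$ would replace $m_k/n_k<3\delta$ and weaken the final constant to $3\delta/(1-3\delta)$. The parameters $\tilde{\e}=2\e/(1-4\delta)$ and $\alpha=\e/(1-4\delta)$ are tailored so as to absorb the loss $(n_k-m_k)/n_k$ when promoting the separation back to $\widetilde{d}_{n_k}$, while still strictly exceeding the ball diameter $2\e$.
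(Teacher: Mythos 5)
Your proof is correct and follows essentially the same route as the paper's: you use the same matrix/pigeonhole argument, the same splitting of the Birkhoff sum at $m_k$ to transfer separation to $\widetilde{d}_{n_k-m_k}(T^{m_k}x_i,T^{m_k}x_j)$, and the same factor $(n_k-m_k)/n_k>1-4\delta$ to promote it to $\widetilde{d}_{n_k}$-separation. The only cosmetic difference is that you reparametrize the separation scale to $\tilde\e=2\e/(1-4\delta)$ at the outset, whereas the paper runs the argument at scale $\e$ and substitutes $\e\mapsto 2\e/(1-4\delta)$ in the final inequality.
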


\begin{proof} We will fix $\e>0$ and $\delta\in (0,1/4)$. Let  $E_n=\{x_1,...,x_{\widetilde{N}_d(n,\e)}\}$ be a maximal collection of $(n,\e)$-average separated points in $\X$.  Define $$\sigma_n=\dfrac{1}{|E_n|}\sum_{x\in E_n} \delta_x,$$
where $\delta_x$ is the probability measure supported at $x$.  Then define   
$$\overline{\sigma}_n=\dfrac{1}{n}\sum_{k=0}^{n-1} T^k_*\sigma_n.$$
Consider a subsequence $\{n_k\}$ such that $$\widetilde{S}(\X,d,\e)=\lim_{k\to \infty}\dfrac{1}{n_k}\log \widetilde{N}_d(n_k,\e).$$ By standard arguments we can find a subsequence of $\{n_k\}$, that we still denote by $\{n_k\}$, such that $\{\overline{\sigma}_{n_k}\}_{k\in\N}$ converges to a $T$-invariant probability measure $\mu$. We  arrange the sequence such that $\lim_{k\to \infty} \frac{n_k}{k}=\infty$. Let $K$ be a subset of $\X$ with $\mu(K)> 1-\delta$ and $\widetilde{N}_d(K,n,(1-4\delta)\e/2)=\widetilde{N}_\mu(n,(1-4\delta)\e/2,\delta)$ where $\widetilde{N}(K,m,r)$ is defined to be the minimum number of $(m,r)$-average dynamical balls needed to cover $K$. We can assume that $K$ is open in $\X$. There exists $k_0$ such that for every $k\ge k_0$ we have $\overline{\sigma}_{n_k}(K)>  1-\delta$. Let $$L_n=\{(i,j)\in\N^2: 0\le i\le n-1, 1\le j\le \widetilde{N}(n,\e)\}.$$ We assign to each point in $L_n$ either a $0$ or $1$ in the following way. If $T^ix_j\in K$ then assign $1$ to the point $(i,j)$,  assign $0$ to $(i,j)$ otherwise. By definition of the measure $\overline{\sigma}_{n_k}$ we know that the number of ones in $L_{n_k}$ is bigger or equal than $n_k \widetilde{N}_d(n_k,\e)(1-\delta)$. For $s>1$ we define $L_{n_k}(s)$ as the set of points in $L_{n_k}$ with first coordinate in the interval $[[n_k/k],n_k-[n_k/s]-1]$. The number of ones in $L_{n_k}(s)$ is at least $n_k \widetilde{N}_d(n_k,\e)(1-\delta-\frac{1}{n_k}[\frac{n_k}{s}]-\frac{1}{n_k}[\frac{n_k}{k}])\ge n_k \widetilde{N}_d(n_k,\e)(1-\delta-\frac{1}{s}-\frac{1}{k})$. From now on we assume $s>\frac{1}{1-2\delta-\frac{1}{k}}$, in particular $1-\delta-\frac{1}{s}-\frac{1}{k}>\delta$. We will moreover assume $s<\frac{1}{1-3\delta}$. This can be done if we assume $k$ is sufficiently large so that $\delta>1/k$. We conclude that the number of ones in $L_{n_k}(s)$ is at least  $n_k \widetilde{N}_d(n_k,\e)\delta$. Since $L_{n_k}$ has $(n_k-[n_k/s]-[n_k/k])$ columns, then by the pigeonhole principle there exists an index $m_k$ such that the $m_k$th column has at least $\frac{n_k \widetilde{N}_d(n_k,\e)\delta}{(n_k-[n_k/s]-[n_k/k])}$ ones and $[n_k/k]\le m_k< n_k-[n_k/s]$. Observe that $\X$ can be covered by at most $\widetilde{N}_d(m,\e/2)$ $(m,\e/2)$-average dynamical balls, in particular with $\widetilde{N}_d(m_k,\e/2)$ subsets of $d_{m_k}$-diameter smaller than $\e$. Also observe that if $i\ne j$ and $\widetilde{d}_{m_k}(x_i,x_j)\le \e$, then 
\begin{align*}
m_k\e+\sum_{p=0}^{n_k-m_k-1} d(T^{m_k+p}x_i,T^{m_k+p}x_j)\ge& \sum_{p=0}^{m_k-1}d(T^{p} x_i,T^{p}x_j)+\sum_{p=m_k}^{n_k-1} d(T^px_i,T^px_j)\\ &> n_k\e. 
\end{align*}
In the last inequality, we are using the fact that $\widetilde{d}_{n_k}(x_i,x_j)>\e$.  This implies that $\widetilde{d}_{n_k-m_k}(T^{m_k}x_i,T^{m_k}x_j)> \e$. We can conclude that there exists an subset $I\subset \{1,....,\widetilde{N}_d(n_k,\e)\}$ such that for $i\in I$ we have $T^{m_k}x_i\in K$, and $|I|\ge n_k \widetilde{N}_d(n_k,\e)\delta/(n_k-[n_k/s]-[n_k/k])>\widetilde{N}_d(n_k,\e)\delta$. By the pigeonhole principle there exists a subset $A$ of $\{T^{m_k} x_i\}_{i\in I}$ such that  the diameter of $A$ with respect to $\widetilde{d}_{m_k}$ is at most $\e$ and $$|A|\ge \dfrac{ \widetilde{N}_d(n_k,\e)\delta}{\widetilde{N}_d(m_k,\e/2)  }.$$ As mentioned above this implies that if $a,b\in A$ and $a\ne b$, then $\widetilde{d}_{n_k-m_k}(a,b)\ge \e$. Therefore  $$\widetilde{d}_{n_k}(a,b)>\frac{n_k-m_k}{n_k}\e>\frac{\frac{n_k}{s}-1}{n_k}\e>\Big(\dfrac{1}{s}-\dfrac{1}{n_k}\Big)\e>\Big(1-3\delta-\frac{1}{n_k}\Big)\e>(1-4\delta)\e.$$
We have assumed $k$ is sufficiently large so that $\delta>\frac{1}{n_k}$. Finally we obtain the bound
$$\widetilde{N}_\mu(n_k,(1-4\delta)\e/2,\delta)=\widetilde{N}_d(K,n_k,(1-4\delta)\e/2)\ge  P \ge \dfrac{ \widetilde{N}_d(n_k,\e)\delta}{\widetilde{N}_d(m_k,\e/2)  },$$
where $P$ is the maximum number of $(n,(1-4\delta)\e)$-average separated points in $K$. Finally
\begin{align*}\widetilde{h}_\mu((1-4\delta)\e/2,T,\delta)=&\limsup_{n\to \infty}\dfrac{1}{n}\log \widetilde{N}_\mu(n,(1-4\delta)\e/2,\delta)\\ \ge & \limsup_{k\to\infty} \dfrac{1}{n_k}\log \widetilde{N}_d(n_k,\e)-\dfrac{m_k}{n_k}\dfrac{1}{m_k}\log \widetilde{N}_d(m_k,\e/2).
\end{align*}
Recall that by construction $m_k<n_k-[n_k/s]$, then $\frac{m_k}{n_k}\le 1-\frac{1}{s}+\frac{1}{n_k}$. Therefore
\begin{align*} \widetilde{h}_\mu((1-4\delta)\e/2,T,\delta)&\ge \limsup_{k\to\infty} \dfrac{1}{n_k}\log \widetilde{N}_d(n_k,\e)-(1-\dfrac{1}{s}+\dfrac{1}{n_k})\dfrac{1}{m_k}\log \widetilde{N}_d(m_k,\e/2).\\
&\ge \limsup_{k\to\infty} \dfrac{1}{n_k}\log \widetilde{N}_d(n_k,\e)-(3\delta+\dfrac{1}{n_k})\dfrac{1}{m_k}\log \widetilde{N}_d(m_k,\e/2).
\end{align*}
 Also observe that the inequality $[n_k/k]\le m_k$ implies that $m_k\to \infty$ as $k\to \infty$, and by construction $\widetilde{S}(\X,d,\e)=\lim_{k\to \infty}\dfrac{1}{n_k}\log \widetilde{N}_d(n_k,\e)$, therefore $$ \widetilde{h}_\mu((1-4\delta)\e/2,T,\delta)\ge  \widetilde{S}(\e)-3\delta \widetilde{S}(\e/2).$$
This implies in particular that $$\sup_{\mu\in\M_T(\X)} \widetilde{h}_{\mu}(\e,T,\delta)\ge  \widetilde{S}\Big(\dfrac{2\e}{1-4\delta}\Big)-3\delta  \widetilde{S}\Big(\dfrac{\e}{1-4\delta}\Big).$$


\end{proof}

As corollary of Lemma \ref{2222} we will obtain a proof of Theorem \ref{teolin}. For this we need a version of Theorem \ref{rd} for non-ergodic measures. To state this result we need to introduce some notation. On $\X^n$ we consider the  metric $\rho_n$ given by $\rho_n(x,y)=\frac{1}{n}\sum_{i=1}^n d(x_i,y_i)$, where $x=(x_1,...,x_n)$ and $y=(y_1,...,y_n)$. We define $\mu^n$ as the image of $\mu$ under the map $\X\to \X^n$ given by $x\mapsto (x,Tx,...,T^{n-1}x)$. For a finite subset $C_n\subset \X^n$ we define $\E_\mu(C_n)$ as the integral $\int _{\X^n}\min_{c\in C_n} \rho_n(x,c)d\mu^n(x)$. 
\begin{definition} Given $R>0$ define $$\delta_\mu(R)=\inf\{\E_\mu(C_n)|  \text{ }C_n\subset \X^n \text{ and }|C_n|\le e^{nR}\},$$
and
$$D_\mu(R)=\inf\{\E_\mu\Big(\frac{1}{n}\sum_{k=0}^{n-1}d(T^kX,Y_k\circ X)\Big)|  \text{ }\frac{1}{n}I_\mu(X,(Y_0,...,Y_{n-1}))\le R\}.$$
\end{definition}
In this language the Source Coding Theorem (see Theorem \ref{rd}) can be stated as 
\begin{theorem} Let $\mu$ be an ergodic $T$-invariant probability measure on $\X$. Then $$\delta_\mu(R)=D_\mu(R).$$
\end{theorem}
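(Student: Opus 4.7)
The plan is to establish the equality by proving both inequalities, one of which is elementary and the other of which is essentially a repackaging of Theorem \ref{rd}.

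For the easy inequality $D_\mu(R)\le \delta_\mu(R)$: given any codebook $C_n\subset \X^n$ with $|C_n|\le e^{nR}$, I would take $(\Omega,\P)=(\X,\mu)$, let $X$ be the identity map, and define $Y:\X\to\X^n$ by sending $x$ to the element of $C_n$ nearest to the orbit segment $(x,Tx,\ldots,T^{n-1}x)$ in the metric $\rho_n$, breaking ties in some measurable way (e.g. by a fixed enumeration of $C_n$). Then $Y$ is finite-valued with image of size at most $|C_n|$, so
\[\tfrac{1}{n}I_\mu(X,Y)\le \tfrac{1}{n}H_\mu(Y)\le \tfrac{1}{n}\log|C_n|\le R,\]
while by construction the expected distortion $\E_\mu(\frac{1}{n}\sum_{k=0}^{n-1}d(T^kX,Y_k\circ X))$ equals $\E_\mu(C_n)$. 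Taking the infimum over all admissible codebooks yields $D_\mu(R)\le \delta_\mu(R)$.

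For the reverse inequality $\delta_\mu(R)\le D_\mu(R)$, which is the substantive direction, I would invoke Theorem \ref{rd}. Fix $\e_0>0$ and let $\e>D_\mu(R)$ be arbitrary. By definition of $D_\mu(R)$, there exists a pair $(X,Y)$ with $X_*\P=\mu$, $\frac{1}{n}I_\mu(X,Y)\le R$, and expected distortion at most $\e$; this exhibits $R_\mu(\e)\le R$. Apply Theorem \ref{rd} to the identity map $X:(\X,\mu)\to(\X,\mu)$ for this choice of $\e$ and $\e_0$: for all $n\ge n(\e_0)$ we obtain a measurable map $f_n:\X\to\X^n$ with $|f_n(\X)|\le e^{n(R_\mu(\e)+\e_0)}\le e^{n(R+\e_0)}$ such that $(X,f_n\circ X)$ satisfies $(*)_{n,\e+\e_0}$. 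Setting $C_n:=f_n(\X)\subset \X^n$ gives a codebook of cardinality at most $e^{n(R+\e_0)}$ whose expected nearest-neighbor distortion $\E_\mu(C_n)$ is bounded by $\e+\e_0$. Hence $\delta_\mu(R+\e_0)\le \e+\e_0$. Letting $\e\downarrow D_\mu(R)$ and then $\e_0\downarrow 0$, using that $\delta_\mu$ is monotone nonincreasing and right-continuous in $R$ (which follows from its convexity, a standard property of operational rate-distortion functions), one concludes $\delta_\mu(R)\le D_\mu(R)$.

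The main obstacle is precisely the achievability direction: promoting a stochastic pair with a mutual-information constraint into a deterministic finite codebook of comparable size without significantly worsening the distortion. This is exactly the content of the Source Coding Theorem, which is imported through Theorem \ref{rd}; everything else is bookkeeping, namely the measurable selection of the nearest-neighbor map in the easy direction and the passage to the limit in the hard direction.
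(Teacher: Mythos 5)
The paper offers no proof of this theorem: it is presented as a restatement of the Source Coding Theorem (the paper simply says ``In this language the Source Coding Theorem (see Theorem \ref{rd}) can be stated as\dots'' and points to \cite{G}). So there is no argument in the paper to compare against; what you have done is supply the omitted derivation, and it is correct. Your easy direction is exactly the standard observation: a finite codebook with a nearest-neighbor (measurably tie-broken) encoder is a deterministic $Y$, hence $I_\mu(X,Y)=H_\mu(Y)\le\log|C_n|$, which bounds $D_\mu$ by $\delta_\mu$. Your hard direction correctly reverses the quantifiers: any admissible stochastic pair witnessing $D_\mu(R)<\e$ certifies $R_\mu(\e)\le R$, and then Theorem \ref{rd} manufactures codebooks of size $e^{n(R_\mu(\e)+\e_0)}\le e^{n(R+\e_0)}$ with distortion $\e+\e_0$, giving $\delta_\mu(R+\e_0)\le\e+\e_0$; passing to the limits and using convexity (hence continuity on the interior) of $R\mapsto\delta_\mu(R)$ closes the gap.

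One small remark on ordering: the convexity of $\delta_\mu$ that your limiting argument relies on is stated in the paper only in the proposition that \emph{follows} this theorem, citing \cite{G}. You correctly flag it as a standard fact about operational rate-distortion functions rather than deriving it, so there is no circularity, but in a self-contained write-up it would be cleaner to either prove the time-sharing/convexity argument directly or note explicitly that it is imported from \cite{G} independently of the theorem being proved. Aside from that bookkeeping point, the proof is sound and is precisely the unpacking the paper leaves to the cited reference.
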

The following properties of $\delta_\mu(R)$ are proven in \cite{G}.
\begin{proposition} The function $\mu\mapsto \delta_\mu(R)$ is affine and upper semicontinuous. The function $R\mapsto \delta_\mu(R)$ is convex and decreasing. 
\end{proposition}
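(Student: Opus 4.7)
The plan is to unpack the definition $\delta_\mu(R)=\inf\{\E_\mu(C_n): C_n\subset\X^n,\,|C_n|\le e^{nR}\}$ (the infimum running over both $n$ and $C_n$) and treat each of the four assertions separately. Monotonicity in $R$ is immediate: enlarging $R$ enlarges the feasible set over which the infimum is taken. For upper semicontinuity in $\mu$, I would fix $C_n\subset \X^n$ and observe that $x\mapsto \min_{c\in C_n}\rho_n(x,c)$ is continuous on $\X^n$ as a minimum of finitely many continuous functions; since the orbit map $x\mapsto(x,Tx,\dots,T^{n-1}x)$ is continuous, the pushforward $\mu\mapsto \mu^n$ is weakly continuous on $\M_T(\X)$, so $\mu\mapsto \E_\mu(C_n)$ is continuous. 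Then $\delta_\mu(R)$ is an infimum of continuous functions of $\mu$, which is always upper semicontinuous.

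For affinity in $\mu$, write $\mu=t\mu_1+(1-t)\mu_2$; linearity of the integral gives $\E_\mu(C_n)=t\E_{\mu_1}(C_n)+(1-t)\E_{\mu_2}(C_n)$ for every $C_n$, so taking the infimum yields $\delta_\mu(R)\ge t\delta_{\mu_1}(R)+(1-t)\delta_{\mu_2}(R)$. For the reverse inequality I would take near-optimal $C^{(i)}\subset\X^{n_i}$ for $\mu_i$, reduce to a common length $N=n_1n_2$ by the replication trick $C\mapsto C^{\times k}\subset\X^{kn}$ (which preserves both the rate and, by $T$-invariance of $\mu$, the value of $\E_\mu$), and then form the union $C^{(1)}\cup C^{(2)}\subset \X^N$; since the $\min$ over the union is bounded by the $\min$ over each piece, $\E_\mu(C^{(1)}\cup C^{(2)})\le t\E_{\mu_1}(C^{(1)})+(1-t)\E_{\mu_2}(C^{(2)})$, while $|C^{(1)}\cup C^{(2)}|\le 2e^{NR}$ contributes only an asymptotically vanishing rate loss of $\log 2/N$.

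For convexity in $R$, given $R_1,R_2$ and a rational parameter $t=a/(a+b)$, I would take near-optimal codebooks $C^{(1)}\subset \X^a$ and $C^{(2)}\subset \X^b$ and form the coordinate-wise concatenation $C=C^{(1)}\times C^{(2)}\subset \X^{a+b}$, so that a length-$(a{+}b)$ orbit segment is encoded by encoding its first $a$ coordinates by $C^{(1)}$ and its last $b$ coordinates by $C^{(2)}$. The size satisfies $|C|\le e^{aR_1+bR_2}$, which corresponds to rate $tR_1+(1-t)R_2$; using $T$-invariance of $\mu$ to shift the second block back to a standard orbit segment, the expected distortion decomposes exactly as $\E_\mu(C)=t\E_\mu(C^{(1)})+(1-t)\E_\mu(C^{(2)})$, which gives convexity at rational $t$. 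The case of irrational $t$ follows by density together with the monotonicity already established.

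The main technical obstacle is the rate-loss $\log 2/N$ in the affine step and the rational-to-real extension in the convex step. Both can be cleanly handled by first applying Fekete's lemma — the same concatenation construction with $R_1=R_2=R$ shows that $n\mapsto n\delta_\mu^{(n)}(R)$ is subadditive, where $\delta_\mu^{(n)}(R)$ denotes the fixed-$n$ version of the infimum — to deduce $\delta_\mu(R)=\lim_n \delta_\mu^{(n)}(R)$. The small asymptotic corrections then vanish in the limit, and extending through possible discontinuities relies on right-continuity of $\delta_\mu$, itself a consequence of the convexity and monotonicity in hand.
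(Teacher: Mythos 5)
The paper itself offers no proof here: it simply cites Gray's monograph for these standard facts from rate–distortion theory. Your argument therefore fills in what the paper delegates. It is also the standard textbook route, and it checks out: the one-sided inequality for affinity is immediate from $\E_\mu(C_n)=t\E_{\mu_1}(C_n)+(1-t)\E_{\mu_2}(C_n)$ and $\inf(f+g)\ge\inf f+\inf g$; the converse by replicating to a common length $N=n_1n_2$ (rate preserved, $\E_\mu$ preserved by $T$-invariance of $\mu$, since each length-$n$ block of $\mu^{kn}$ has marginal $\mu^n$) and taking a union of codebooks; convexity in $R$ by concatenating codebooks $C^{(1)}\times C^{(2)}\subset\X^{a+b}$, where the distortion decouples exactly as $\frac{a}{a+b}\E_\mu(C^{(1)})+\frac{b}{a+b}\E_\mu(C^{(2)})$ because the marginal of the second block is again $\mu^b$ by invariance; and upper semicontinuity in $\mu$ because each $\mu\mapsto\E_\mu(C_n)$ is continuous (continuity of $\mu\mapsto\mu^n$ in the weak topology and boundedness of $x\mapsto\min_{c\in C_n}\rho_n(x,c)$ on the compact $\X^n$), so $\delta_\mu(R)$ is an infimum of continuous functions. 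The Fekete argument correctly gives $\delta_\mu(R)=\lim_n\delta_\mu^{(n)}(R)$ from subadditivity of $n\mapsto n\delta_\mu^{(n)}(R)$. One point to make explicit in the write-up: establish convexity in $R$ (including the irrational-$t$ extension, which uses only monotonicity and continuity of the affine combination on the right-hand side) \emph{before} the affine step, since the absorption of the $\log 2/N$ rate loss relies on continuity of $R\mapsto\delta_\mu(R)$ on $(0,\infty)$, which you get from convexity plus monotonicity; your sketch respects this order but should state it to avoid any appearance of circularity.
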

As a corollary we obtain the formula $$\delta_\mu(R)=\int_\X \delta_{\mu_x}(R)d\mu(x),$$
where $\mu=\int \mu_x d\mu(x)$ is the ergodic decomposition of $\mu$. It is easy to see that $\inf_{R\ge 0} \delta_\mu(R)=0$. This together with the convexity of $\delta_\mu(R)$ implies that $\delta_\mu(R)$ is strictly decreasing at $R$ if $\delta_\mu(R)>0$. Define $$D_\infty(R)=\sup_x D_{\mu_x}(R)=\sup_x \delta_{\mu_x}(R).$$
Since the supremum of convex function is still convex we conclude $D_\infty(R)$ is convex, in particular continuous. It follows easily from the definition that $\inf_{R\ge 0} D_\infty(R)=0$. As before this implies that $D_\infty(R)$ is strictly decreasing at $R$ if $D_\infty(R)>0$. We remark that $R_\mu(\e)$ is the `formal' inverse of $D_\mu(R)$, with this we mean it is the inverse in the region where the functions are strictly decreasing. For the next remark we will use the following fact.

\begin{lemma} \label{lemconv}
Let $D_{\mu_x}: (0,\infty) \to \R$ be a convex decreasing function for each $x\in\X$ and define $$D_{\infty}(R)=\sup_x D_{\mu_x}(R).$$ Let $I=\{t\in (0,\infty): D_\infty(t)>0\}$ and $J=Im(D_\infty)$. Then $D_\infty^{-1}:J\to I$ is well defined and for every $\e\in J$ we have
$$D_{\infty}^{-1}(\epsilon)=\sup_{x} D_{\mu_x}^{-1}(\epsilon).$$ 
\end{lemma}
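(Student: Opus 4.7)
The plan is to show (i) that $D_\infty$ is strictly decreasing on $I$, so that $D_\infty^{-1}:J\to I$ is well defined, and (ii) the identity $D_\infty^{-1}(\e)=\sup_x D_{\mu_x}^{-1}(\e)$, which will follow from monotonicity once (i) is in hand. The main obstacle is (i): nothing prevents an individual $D_{\mu_x}$ from being constant on subintervals, so the strict decrease of $D_\infty$ has to come from a global convexity argument combined with the fact (recorded earlier in the excerpt) that $\inf_{R>0}D_\infty(R)=0$.

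For (i), as the pointwise supremum of convex functions, $D_\infty$ is convex on $(0,\infty)$ and hence continuous there. A convex decreasing function $f$ that is constant on some interval $[a,b]$ must be constant on $[a,\infty)$: writing $b=\lambda a+(1-\lambda)c$ for $c>b$ with $\lambda=(c-b)/(c-a)\in(0,1)$, convexity forces $f(c)\ge f(b)$, while monotonicity forces $f(c)\le f(b)$. Combined with $\inf_{R>0}D_\infty(R)=0$, any plateau of $D_\infty$ must therefore have value $0$, so $D_\infty$ cannot be locally constant at any point of $I$. Thus $D_\infty|_I$ is strictly decreasing and continuous, and $D_\infty^{-1}:J\to I$ is well defined.

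Now fix $\e\in J$ and put $R_0:=D_\infty^{-1}(\e)\in I$, so in particular $\e>0$. I would interpret each $D_{\mu_x}^{-1}(\e)$ as the generalized inverse $\inf\{R>0:D_{\mu_x}(R)\le\e\}$. The inequality $\sup_x D_{\mu_x}^{-1}(\e)\le R_0$ is immediate, since $D_{\mu_x}(R_0)\le D_\infty(R_0)=\e$ forces $D_{\mu_x}^{-1}(\e)\le R_0$ for every $x$. For the reverse inequality, suppose for contradiction that $S:=\sup_x D_{\mu_x}^{-1}(\e)<R_0$ and pick some $R'\in(S,R_0)$. For each $x$, $R'>D_{\mu_x}^{-1}(\e)$ together with monotonicity of $D_{\mu_x}$ gives $D_{\mu_x}(R')\le\e$, hence $D_\infty(R')\le\e$. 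But $R'\in I$ (since $D_\infty(R')\ge D_\infty(R_0)=\e>0$) and $R'<R_0$, so the strict decrease of $D_\infty$ on $I$ yields $D_\infty(R')>D_\infty(R_0)=\e$, a contradiction. This proves $\sup_x D_{\mu_x}^{-1}(\e)\ge R_0$, and combining the two inequalities finishes the argument.
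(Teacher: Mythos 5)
Your proof is correct and follows the paper's strategy: $D_\infty$ is strictly decreasing on $I$ by convexity together with $\inf_{R>0}D_\infty(R)=0$ (a hypothesis established in the discussion just before the lemma, not in its statement, as you rightly flag), the easy inequality $\sup_x D_{\mu_x}^{-1}(\e)\le D_\infty^{-1}(\e)$ is monotonicity, and the reverse inequality is proved by contradiction. Your execution of the contradiction step is marginally cleaner than the paper's: by fixing the interpretation $D_{\mu_x}^{-1}(\e)=\inf\{R>0:D_{\mu_x}(R)\le\e\}$ and deriving incompatible bounds $D_\infty(R')\le\e$ and $D_\infty(R')>\e$ at a single point $R'$, you avoid the paper's appeal to the intermediate value theorem applied to a chosen $D_{\mu_x}$ and its accompanying hedge ``whenever $D_{\mu_x}^{-1}(\e)$ is well defined,'' an ambiguity that your generalized-inverse convention renders moot.
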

\begin{proof} As mentioned above the convexity of $D_{\mu_x}$ implies the convexity and therefore the continuity of $D_\infty$. We also know that $D_\infty$ is strictly decreasing at $R$ if $D_\infty(R)>0$. This implies that $D_\infty^{-1}:J\to I$ is well defined and $J$ is a connected open interval. The inequality  $D_{\infty}^{-1}(\epsilon)\ge \sup_{x} D_{\mu_x}^{-1}(\epsilon)$ follows from the decreasing assumption. For the other inequality we argue by contradiction, i.e. we assume there exists $\e\in J$ and $\epsilon_0>0$ such that for all $x\in\X$ we have $D_{\infty}^{-1}(\epsilon)-\epsilon_0 >D_{\mu_x}^{-1}(\epsilon)$, whenever $D_{\mu_x}^{-1}(\epsilon)$ is well defined. Define $b=D_{\infty}^{-1}(\epsilon)$ and $a=D_{\infty}^{-1}(\epsilon)-\epsilon_0/2$. By the definition of $D_\infty$ there exists $x$ such that $D_{\infty}(a)\ge D_{\mu_x}(a) >\epsilon=D_{\infty}(b)\ge D_{\mu_x}(b)$. By the intermediate value theorem exists $c\in [a,b)$ such that $D_{\mu_x}(c)=\epsilon.$ This gives a contradiction since $c> D_{\infty}^{-1}(\epsilon)-\epsilon_0$.
\end{proof}

\begin{remark} \label{remnoerg} Given $R>0$ and $\e_0>0$, there exists $n(\e_0,R)\in \N$ such that the following holds. For $n\ge n(\e_0,R)$ there exists $C_n\subset \X^n$ such that 
$|C_n|\le e^{n(R+\e_0)}$ and $\E_\mu(C_n)\le \int D_{\mu_x}(R)d\mu(x) +\e_0$. 
In particular $\E_\mu(C_n)\le D_\infty(R) +\e_0$. If $R_\infty(\e)$ is the inverse of $D_\infty(R)$, then by Lemma \ref{lemconv} we know $R_\infty(\e)=\sup_{\mu_x} R_{\mu_x}(\e)$. In particular for $R=R_\infty(\e)$ and $n\ge n(\e_0,R)$ we get a code $C_n\subset \X^n$ such that $|C_n|\le e^{n(R_\infty(\e)+\e_0)}$ and $\E_\mu(C_n)\le \e+\e_0$. Following the proof of Proposition \ref{prop2} we get $$\widetilde{h}_\mu(4L\e,T,1/L)\le \sup_x R_{\mu_x}(\e)\le\sup_{\mu\text{ ergodic}} R_{\mu}(\e).$$
\end{remark}

Before reproving Theorem \ref{teolin} we are going to state  Condition 1.2 introduced in \cite{LT} and mentioned in the statement of Theorem \ref{teolin}.

\begin{definition} \label{cond}
Let $(\mathcal{X},d)$ be a compact metric space. It satisfy the Condition 1.2, if for every $\delta>0$, we have $$\lim_{\epsilon\to 0} \epsilon^{\delta} \log \#(\mathcal{X},d,\epsilon)=0,$$
where $\#(\mathcal{X},d,\epsilon)$ is the minimal number of $\e$-balls needed to cover $\X$.
\end{definition}

\begin{remark}\label{remfin}
It is important to mention that every compact metrizable space admits a distance satisfying the Condition 1.2. This remark corresponds to Lemma 1.3 in \cite{LT}. It is also important to mention that Condition 1.2 implies that $$\limsup_{\e\to 0} \dfrac{\widetilde{S}(\X,d,\e)}{|\log\e|}=\overline{mdim}(\X,d,T).$$ 
\end{remark}

\begin{proposition} \label{proprate} Let $(\X,d)$ be a compact metric space and $T:\X\to \X$ a continuous map. Then 
$$\limsup_{\e\to 0} \dfrac{\sup_\mu R_\mu(\e)}{|\log \e|}= \limsup_{\e\to 0} \dfrac{\widetilde{S}(\X,d,\e)}{|\log\e|}.$$
If $(\X,d)$ satisfy Condition 1.2 in \cite{LT}, then $$\overline{mdim}(\X,d,T)=\limsup_{\e\to 0} \dfrac{\sup_\mu R_\mu(\e)}{|\log \e|}.$$
\end{proposition}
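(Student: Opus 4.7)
The plan is to establish the first equality by sandwiching $\sup_\mu R_\mu(\e)$ between two quantities whose logarithmic ratios asymptotically agree with $\widetilde{S}(\X,d,\e)/|\log\e|$, and then to deduce the second equality from Remark \ref{remfin}, which identifies that $\limsup$ with the upper metric mean dimension under Condition 1.2.

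For the direction $\le$, I would fix any $T$-invariant probability measure $\mu$ and observe that the proof of Proposition \ref{prop1} does not actually use ergodicity: it gives $R_\mu(\e)\le \widetilde{h}_\mu(\e,T,\e/2D)$ for any invariant $\mu$. Since a maximal $(n,\e)$-average separated set automatically provides an $(n,\e)$-average cover of $\X$, we have $\widetilde{N}_\mu(n,\e,\delta)\le \widetilde{N}_d(n,\e)$ for every $\delta$, and hence $\widetilde{h}_\mu(\e,T,\delta)\le \widetilde{S}(\X,d,\e)$. Supremising over $\mu$, dividing by $|\log\e|$, and sending $\e\to 0$ yields
\begin{equation*}
\limsup_{\e\to 0}\frac{\sup_\mu R_\mu(\e)}{|\log\e|}\le \limsup_{\e\to 0}\frac{\widetilde{S}(\X,d,\e)}{|\log\e|}.
\end{equation*}

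For the direction $\ge$, I would combine Lemma \ref{2222} with Remark \ref{remnoerg}. The remark gives $\widetilde{h}_\mu(4L\e,T,1/L)\le \sup_{\mu\text{ ergodic}} R_\mu(\e)$; renaming $(\e',\delta'):=(4L\e,1/L)$ this reads $\sup_\mu\widetilde{h}_\mu(\e',T,\delta')\le \sup_\mu R_\mu(\delta'\e'/4)$. Feeding Lemma \ref{2222} into this bound and setting $\e''=2\e'/(1-4\delta')$ (so $\e'=\e''(1-4\delta')/2$) yields
\begin{equation*}
\sup_\mu R_\mu\!\Bigl(\tfrac{\delta'(1-4\delta')}{8}\,\e''\Bigr)\;\ge\;\widetilde{S}(\e'')-3\delta'\,\widetilde{S}(\e''/2).
\end{equation*}
Fixing $\delta'\in(0,1/4)$, picking $\e_k\to 0$ that realises $L:=\limsup_{\e\to 0}\widetilde{S}(\e)/|\log\e|$, dividing by $|\log\e_k|$, and using that $|\log(c\e_k)|/|\log\e_k|\to 1$ for any constant $c>0$ while $\widetilde{S}(\e_k/2)/|\log\e_k|\le L+o(1)$ by the definition of $\limsup$, I obtain $\limsup_{\e\to 0}\sup_\mu R_\mu(\e)/|\log\e|\ge (1-3\delta')L$. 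Letting $\delta'\to 0$ closes the gap and proves the first equality. The second assertion is then immediate from Remark \ref{remfin}.

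The delicate step is the asymptotic bookkeeping in the lower bound: the penalty $3\delta'\widetilde{S}(\e/2)$ must be absorbed by freezing $\delta'$ while $\e\to 0$, and only sending $\delta'\to 0$ afterwards, so that the two limits are not interchanged. The degenerate case $L=\infty$ is handled in the same fashion by replacing $L$ with an arbitrary large constant $A$ and then sending $A\to\infty$, provided one selects the sequence $\e_k$ so that $\widetilde{S}(\e_k/2)/\widetilde{S}(\e_k)$ remains bounded, which can always be arranged since $\widetilde{S}(\cdot)$ is non-increasing.
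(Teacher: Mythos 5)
Your argument for the lower bound $\limsup_{\e\to 0}\sup_\mu R_\mu(\e)/|\log\e|\ge\limsup_{\e\to 0}\widetilde{S}(\e)/|\log\e|$ is the same as the paper's: you combine Remark~\ref{remnoerg} (which gives $\widetilde{h}_\mu(4L\e,T,1/L)\le\sup_{\mu\text{ erg.}}R_\mu(\e)$) with Lemma~\ref{2222}, divide by $|\log\e|$, take $\limsup_{\e\to 0}$ with $\delta$ frozen, and only then send $\delta\to 0$; the order of limits is exactly the paper's. For the upper bound the paper refers to \cite{LT}, whereas you give a short self-contained argument via Proposition~\ref{prop1} together with the trivial inequality $\widetilde{N}_\mu(n,\e,\delta)\le\widetilde{N}_d(n,\e)$; your observation that the first inequality of Proposition~\ref{prop1} needs no ergodicity is correct, since that part of its proof only exhibits an explicit pair $(X,Y)$ and never invokes Remark~\ref{rem2}. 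So this is a mild self-contained improvement rather than a different route. The second assertion from Remark~\ref{remfin} is handled as in the paper.

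One point you should not wave away: your treatment of the degenerate case $L=\infty$ is not justified. You claim one can choose $\e_k\to 0$ realizing the $\limsup$ with $\widetilde{S}(\e_k/2)/\widetilde{S}(\e_k)$ bounded ``since $\widetilde{S}$ is non-increasing,'' but monotonicity gives only $\widetilde{S}(\e/2)\ge\widetilde{S}(\e)$ and says nothing about an upper bound on the ratio; for rapidly growing $\widetilde{S}$ (e.g.\ with $\widetilde{S}(\e)$ doubling its exponent each time $\e$ halves) no such sequence exists, since $\e_k$ and $\e_k/2$ always fall into distinct dyadic scales. The standard fix is to observe that the $L<\infty$ case goes through cleanly via $\limsup(a_\e-3\delta b_\e)\ge\limsup a_\e-3\delta\limsup b_\e$ (which holds as long as $\limsup b_\e<\infty$), and the infinite case requires a separate (elementary, but different) argument; the paper itself glosses over this point in the same way, so this is a shared blemish rather than a gap peculiar to your write-up.
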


\begin{proof}  The inequality $$\limsup_{\e\to 0} \dfrac{\sup_\mu R_\mu(\e)}{|\log \e|}\le  \limsup_{\e\to 0} \dfrac{\widetilde{S}(\X,d,\e)}{|\log\e|},$$
is the easy part of the statement. We refer the reader to \cite{LT} for a proof. We will prove here the reversed inequality. Observe that by Remark \ref{remnoerg} we have
$$\widetilde{h}_\mu(4L\epsilon,T,1/L)\le \sup_{\mu\text{ ergodic} }R_\mu(\epsilon).$$
In particular $$\widetilde{h}_\mu(4\e/\delta,T,\delta)\le \sup_{\mu\text{ ergodic} }R_\mu(\epsilon).$$ This implies 
$$\sup_{\mu \text{ ergodic}} R_\mu(\e)\ge \sup_\mu \widetilde{h}_\mu(4\e/\delta,T,\delta)\ge \widetilde{S}\Big(\frac{8\e}{\delta(1-4\delta)}\Big)-3\delta\widetilde{S}\Big(\frac{4\e}{\delta(1-4\delta)}\Big).$$
Finally dividing by $|\log \e|$, taking limsup in $\e$ and then $\delta \to 0$ we obtain the lower bound
$$\limsup_{\e\to 0} \dfrac{\sup_{\mu \text{ ergodic}}R_\mu(\e)}{|\log \e|}\ge \limsup_{\e\to 0} \dfrac{\widetilde{S}(\e)}{|\log\e|}.$$ 

\end{proof}
Combining  Proposition \ref{proprate} and Remark \ref{remfin} we obtain a proof of Theorem \ref{teolin}.

\section{Dynamics on manifolds}\label{sec5}

We start this section with the following observation. 
\begin{remark} \label{remmm} Let $F:\X\to \X$ be a dynamical system. The map $x\mapsto (x,Fx,F^2x,...)$ embedds $(\X,F)$ into $(\X^\N,T)$, where $T$ is the shift map. In other words we have a $T$-invariant subset $Y\subset \X^\N$ where $(Y,T)$ is conjugate to $(\X,F)$.
The metric $d_T$ on $\X^\N$ restricted to $Y$ induces a metric on $\X$. We still denote this metric by $d_T$. More explicitely we have $d_T(x,y)=\sum_{k\ge 0} \frac{1}{2^k}d(T^kx,T^ky)$. Since $d(x,y)\le d_T(x,y)$ for all $x,y\in \X$ we conclude that 
\begin{align*} \overline{mdim}(\X,d,F)\le & \overline{mdim}(\X,d_T,F)=\overline{mdim}(Y,d_T,T)\\ \le & \overline{mdim}(\X^\N,d_T,T) =\overline{dim}_B(\X,d).
\end{align*}
In particular the dimension of $\X$ is always an upper bound for the mean dimension. 
\end{remark}
A map $F$ that realize the equality in the above inequality is said to have maximal metric mean dimension. We will construct an example of a continuous map on the interval with maximal metric mean dimension. From now on $d$ will always stand for the euclidean distance in $[0,1]$. We start with the following Lemma. 

\begin{lemma} \label{exam} Let $J_k=[a_{k-1},a_k]\subset [0,1]$ and $b_k=a_k-a_{k-1}$. Decompose $J_k$ into $2l_k$ equal intervals $\{J^s_k\}_{s=1}^{2l_k}$ and define $\e_k=\frac{b_k}{2l_k}$. Suppose that $f:[0,1]\to [0,1]$ is a continuous map such that $J_k\subset f(J^s_k)$. Then $$S([0,1],d,\e_k)\ge \log(l_k).$$
\end{lemma}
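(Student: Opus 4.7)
The plan is to exhibit an $(n,\e_k)$-separated subset of $[0,1]$ of cardinality $l_k^n$, using a ``horseshoe'' symbolic construction built from the $l_k$ \emph{odd-indexed} subintervals $J_k^1,J_k^3,\dots,J_k^{2l_k-1}$. The hypothesis $J_k\subset f(J_k^s)$ says that every $J_k^s$ maps across the whole of $J_k$, so any prescribed itinerary through the odd-indexed subintervals should be realizable as the orbit of some point, which is what the backward-preimage construction will deliver.

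Concretely, for each sequence $\sigma=(s_0,s_1,\dots,s_{n-1})\in\{1,3,\dots,2l_k-1\}^n$ I will produce $x_\sigma\in [0,1]$ with $f^i(x_\sigma)\in J_k^{s_i}$ for every $0\le i\le n-1$. Set $A_{n-1}=J_k^{s_{n-1}}$, and for $0\le i\le n-2$ define $A_i = J_k^{s_i}\cap f^{-1}(A_{i+1})$. Using that $f$ is continuous on $J_k^{s_i}$ and $f(J_k^{s_i})\supset J_k\supset J_k^{s_{i+1}}\supset A_{i+1}$, the intermediate value theorem yields at each step a nonempty compact subset $A_i\subset J_k^{s_i}$ with $f(A_i)\supset A_{i+1}$. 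Any choice $x_\sigma\in A_0$ then has the required itinerary.

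Now I count and separate. There are $l_k^n$ itineraries, so $l_k^n$ points $x_\sigma$. If $\sigma\ne\tau$, pick $i$ with $s_i\ne t_i$; since $s_i$ and $t_i$ are distinct odd integers in $[1,2l_k-1]$, we have $|s_i-t_i|\ge 2$, so $f^i(x_\sigma)$ and $f^i(x_\tau)$ lie in subintervals of $J_k$ separated by at least one full subinterval of length $\e_k$, giving $d(f^i(x_\sigma),f^i(x_\tau))\ge\e_k$, and hence $d_n(x_\sigma,x_\tau)\ge\e_k$. Thus $N_d(n,\e_k)\ge l_k^n$, and taking the limsup in $n$ yields $S([0,1],d,\e_k)\ge\log l_k$.

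The only point of caution is that the separation the argument delivers is $d_n\ge\e_k$ rather than strict; this matches the non-strict convention, and under the strict convention one can instead insist that $f^i(x_\sigma)$ lie in the interior of $J_k^{s_i}$, which is possible because the sets $A_i$ produced above can be refined to nondegenerate subintervals of the interiors of $J_k^{s_i}$ without affecting the count. I expect no other genuine obstacle: once the symbolic construction is in place the counting and separation are immediate.
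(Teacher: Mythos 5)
Your proposal is correct and takes essentially the same route as the paper: fix the odd-indexed subintervals, observe that the resulting itinerary cylinders are nonempty because $J_k\subset f(J_k^s)$, note that distinct odd indices force a gap of at least one subinterval and hence $d_n$-separation by $\e_k$, and count $l_k^n$ such cylinders. The paper states the nonemptiness of the cylinders as an immediate consequence of $J_k\subset f(J_k^s)$ where you spell out the backward preimage / IVT construction, and your final remark on the strict-versus-nonstrict separation convention addresses a detail the paper glosses over; neither represents a genuinely different argument.
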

\begin{proof} Define the cylinders $$A_{i_0,...,i_{m-1}}=\{x\in J_k: f^{h}x\in J^{i_h}_s, \quad \textrm{for every } h\in\{0,...,m-1\}\}.$$
We will only consider those sets with all $i_h$'s odd. If $x$ and $y$ are in distinct cylinders, then $d_n(x,y)\ge e_k$. Moreover by the assumption $J_k\subset f(J^s_k)$ we know each cylinder is nonempty.  This implies that the maximal number of $(m,\e_k)$-separated points in $J_k$ is at least $(l_k)^m$ for each $m\ge 1$. In particular $$S([0,1],d,\e_k)\ge S(J_k,d,\e_k)\ge \log l_k.$$

\end{proof}

\begin{proposition} \label{existence} There exists $f:[0,1]\to [0,1]$ continuous such that $$\overline{mdim}([0,1],d,f)=1.$$
\end{proposition}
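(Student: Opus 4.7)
The plan is to realize the upper bound from Remark \ref{remmm} by exhibiting an explicit piecewise linear map whose graph contains arbitrarily many zigzags at arbitrarily small scales. The idea is to shrink a nested sequence of invariant intervals $J_k$ toward the right endpoint $1$, and, on each $J_k$, place a ``tent iterate'' with $2l_k$ linear branches, each mapping a subinterval of length $\e_k=b_k/(2l_k)$ affinely and surjectively onto $J_k$. Lemma \ref{exam} then gives $S([0,1],d,\e_k)\ge \log l_k$, and by choosing the parameters so that $\log l_k/|\log \e_k|\to 1$ we obtain $\overline{mdim}([0,1],d,f)\ge 1$. The opposite inequality $\overline{mdim}([0,1],d,f)\le 1=\overline{dim}_B([0,1],d)$ is immediate from Remark \ref{remmm}.

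In more detail, I would set $a_k=1-2^{-k}$, so that $J_k=[a_{k-1},a_k]$ satisfies $b_k=2^{-k}$, and choose $l_k=2^{k^2}$. On each $J_k$ I define $f$ to be piecewise affine with $2l_k$ branches of equal length, alternately increasing and decreasing, so that $f(a_{k-1})=a_{k-1}$, $f(a_k)=a_k$, and each piece $J_k^s$ satisfies $f(J_k^s)=J_k$. Setting $f(1)=1$ extends $f$ to $[0,1]$. Continuity at interior endpoints $a_k$ follows since the two definitions both send $a_k$ to $a_k$, and continuity at $x=1$ follows because $x\in J_k$ implies $f(x)\in J_k\subset [1-2^{-k+1},1]$, so $f(x)\to 1$ as $x\to 1^-$.

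With these choices, $\e_k=2^{-(k^2+k+1)}$ so that
\[
\frac{\log l_k}{|\log \e_k|}=\frac{k^2\log 2}{(k^2+k+1)\log 2}\xrightarrow[k\to\infty]{}1.
\]
Lemma \ref{exam} applied to $f$ restricted to $J_k$ (note $f$ preserves $J_k$, so the cylinder argument goes through inside $[0,1]$) gives $S([0,1],d,\e_k)\ge \log l_k$, hence
\[
\overline{mdim}([0,1],d,f)=\limsup_{\e\to 0}\frac{S([0,1],d,\e)}{|\log \e|}\ge \limsup_{k\to\infty}\frac{\log l_k}{|\log\e_k|}=1.
\]
Combined with the upper bound from Remark \ref{remmm} we conclude $\overline{mdim}([0,1],d,f)=1$.

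The main obstacle is essentially bookkeeping: ensuring that the sequences $b_k$ and $l_k$ are calibrated so that the ratio $\log l_k/|\log\e_k|$ has limsup exactly $1$ while keeping $\sum b_k\le 1$ and producing a genuinely continuous map at the accumulation point $x=1$. Once the parameters are chosen as above, the hypotheses of Lemma \ref{exam} are directly verified by construction, and no further estimate beyond the elementary limit computation is needed.
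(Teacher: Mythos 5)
Your proof is correct in spirit and follows essentially the same route as the paper's: pick a sequence of intervals $J_k$ shrinking in total length to $1$, put a tent map with $2l_k$ branches on each $J_k$, apply Lemma~\ref{exam}, and calibrate $b_k$ and $l_k$ so that $\log l_k/|\log\e_k|\to 1$. The paper uses $b_k=C/k^2$, $l_k=k^k$; you use $b_k=2^{-k}$, $l_k=2^{k^2}$. Both calibrations work, and the upper bound is the same application of Remark~\ref{remmm}, so there is no real methodological difference here.

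There is, however, a small bookkeeping error in the explicit construction. If $f$ has $2l_k$ alternating (increasing/decreasing) affine branches on $J_k$, each surjecting onto $J_k$, and $f(a_{k-1})=a_{k-1}$, then after an \emph{even} number of full tent branches the right endpoint comes back to the bottom: one gets $f(a_k)=a_{k-1}$, not $f(a_k)=a_k$ as you claim. This is inconsistent with your requirement on $J_{k+1}$ that $f(a_k)=a_k$, so as literally written $f$ is not continuous at the interior endpoints $a_k$. The fix is easy and does not affect any of your estimates: Lemma~\ref{exam} only requires $J_k\subset f(J_k^s)$, not that each branch be affine, so you may replace (say) the last branch on $J_k$ by a small V-shape that still covers $J_k$ but ends at $a_k$, or equivalently allow the matching at $a_k$ to be $f(a_k)=a_{k-1}$ and start the $J_{k+1}$-block from value $a_{k-1}$ (again adjusting one branch so each $J_{k+1}^s$ still covers $J_{k+1}$). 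Your continuity argument at $x=1$ is unaffected since $f(J_k)=J_k$ regardless of the endpoint convention, and the final limit computation $\log l_k/|\log\e_k|=k^2/(k^2+k+1)\to 1$ stands.
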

\begin{proof} 
Let  $b_k=C/k^2$ such that $\sum_{k\ge1} b_k=1$ and $l_k=k^k$. Define $a_k=\sum_{1\le s\le k} b_s$, $a_0=0$ and $J_k=[a_{k-1},a_k]$. It is easy to construct a continuous function $f$ such that the requirements in Lemma \ref{exam} are satisfied.   We can conclude that $$mdim([0,1],d,f)=\limsup_{\e\to 0} \frac{S([0,1],d,\e)}{|\log\e|}\ge \limsup_{k\to \infty} \frac{S([0,1],d,\e_k)}{|\log\e_k|}\ge  \limsup_{k\to \infty} \frac{\log l_k}{|\log\e_k|},$$
where $\e_k=\frac{b_k}{2l_k}=\frac{C}{2k^{k+2}}$. Then 
$$\overline{mdim}([0,1],d,f)\ge  \limsup_{k\to \infty} \frac{\log l_k}{|\log\e_k|} = \limsup_{k\to \infty} \frac{\log k^k}{\log (2k^{k+2}/C)}=1. $$
The opposite inequality holds for any continuous map by Remark \ref{remmm} since $$dim_B([0,1],d)=1.$$
\end{proof}

A similar construction allows us to prove that if $f(x)=x$ has infinitely many solutions, then $f$ can be approximated in the $C^0$ topology by continuous functions with metric mean dimension equal to one (slightly perturb the function $f$ nearby the fixed points by a map as in Lemma \ref{exam} using $l_k$ sufficiently large in comparison with $b_k$). This argument allows us to prove that the space $$A=\{F\in C([0,1]): \overline{mdim}([0,1],d,F)=1\},$$
where $C([0,1])$ is the space of continuous self maps on the interval $[0,1]$ with the uniform topology, is dense in $C([0,1])$. To see this start with $G\in C([0,1])$. By the mean value theorem  there exists $x_G\in [0,1]$ such that $G(x_G)=x_G$. Nearby $x_G$ perturb $G$ to a function $G'$ with infinitely many fixed points. Then mimic the construction done in Proposition \ref{existence} as mentioned above. We summarize this in the following result.
\begin{proposition} Let $$A=\{F\in C([0,1]): \overline{mdim}([0,1],d,F)=1\}.$$
Then $A$ is a dense subset of $C([0,1])$. 
\end{proposition}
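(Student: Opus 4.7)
The plan is to reproduce the construction of Proposition \ref{existence} inside a small neighborhood of a fixed point of $G$, leaving $G$ essentially unchanged outside that neighborhood. Fix $G\in C([0,1])$ and $\eta>0$; the goal is to produce $F\in A$ with $\|F-G\|_\infty<\eta$. Since $x\mapsto G(x)-x$ is nonnegative at $0$ and nonpositive at $1$, the intermediate value theorem (the paper's ``mean value theorem'') yields a fixed point $x_G\in[0,1]$. By continuity of $G$ at $x_G$ I choose $\alpha>0$ small enough that $G$ maps $I:=[x_G-\alpha,x_G+\alpha]\cap[0,1]$ into $(x_G-\eta/3,x_G+\eta/3)$; then I pick $\beta\in(0,\min(\alpha,\eta/3))$ and set $I'=[x_G-\beta,x_G+\beta]\cap[0,1]$. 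If $x_G\in\{0,1\}$ the interval $I'$ is one-sided, but the argument below needs only a trivial modification.

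Next I would carry out the construction of Proposition \ref{existence} rescaled to $I'$. Choose $C>0$ with $\sum_{k\ge 1}C/k^2=|I'|$, set $b_k=C/k^2$, let $a_0$ be the left endpoint of $I'$, $a_k=a_0+\sum_{s\le k}b_s$, and $J_k=[a_{k-1},a_k]\subset I'$. Split each $J_k$ into $2l_k$ equal subintervals $J_k^1,\ldots,J_k^{2l_k}$ with $l_k=k^k$, and build a continuous piecewise-affine map $\phi:I'\to I'$ such that $J_k\subset\phi(J_k^s)$ for all $k,s$ and $\phi$ fixes both endpoints of $I'$. Exactly as in Proposition \ref{existence}, this can be arranged by prescribing a tent-like zigzag of $2l_k$ laps over each $J_k$ and matching values at the $a_k$'s; continuity at the accumulation endpoint is automatic because $\phi(J_k)\subset I'$ and $|J_k|\to 0$.

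To extend to $[0,1]$, I set $F=G$ on $[0,1]\setminus I$, $F=\phi$ on $I'$, and interpolate $F$ linearly on each component of $I\setminus I'$, matching the values $\phi(x_G\pm\beta)=x_G\pm\beta$ and $G(x_G\pm\alpha)$. Then $F$ is continuous, and since both $F(x)$ and $G(x)$ lie in $(x_G-\eta/3,x_G+\eta/3)$ for $x\in I$, one has $\|F-G\|_\infty<2\eta/3<\eta$. The hypothesis of Lemma \ref{exam} is satisfied for $F$ on $[0,1]$ with the intervals $J_k$ and scale $\e_k=b_k/(2l_k)$, so $S([0,1],d,\e_k)\ge\log l_k$, and the computation at the end of Proposition \ref{existence} yields $\overline{mdim}([0,1],d,F)\ge\limsup_{k\to\infty}\log l_k/|\log\e_k|=1$. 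The matching upper bound $\overline{mdim}([0,1],d,F)\le\overline{dim}_B([0,1],d)=1$ from Remark \ref{remmm} gives $F\in A$, and density follows.

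I expect the only delicate point, as in Proposition \ref{existence} itself, to be the continuity of the piecewise-affine $\phi$ at the junctions $a_k$ and at the accumulation endpoint; this is a soft matter of matching values of the zigzag laps, and is why Lemma \ref{exam} requires only the inclusion $J_k\subset f(J_k^s)$ rather than equality, giving room to splice the laps together. Once this is set up, everything else is a direct transcription of the argument of Proposition \ref{existence} applied inside $I'$ rather than $[0,1]$.
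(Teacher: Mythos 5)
Your proof is correct and follows essentially the same route as the paper: locate a fixed point of $G$ via the intermediate value theorem, then reproduce the tent-map construction of Proposition \ref{existence} at shrinking scales near that fixed point and splice it continuously into $G$, using Lemma \ref{exam} for the lower bound and Remark \ref{remmm} for the matching upper bound. You just package the paper's two-step "first create infinitely many nearby fixed points, then attach a tent to each" as a single rescaled copy of Proposition \ref{existence} inside the interval $I'$ (whose junction points $a_k$ are exactly those fixed points), which is only a presentational difference.
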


A similar procedure can be done in a compact manifold $\X$ of dimension bigger or equal than two with a metric $d$ induced from a Riemannian metric $g$.  We will prove that any homeomorphism $F$ on $\X$ can be approximated by a homeomorphism with maximal metric mean dimension under the assumption that $F$ has a fixed point. For simplicity in the notation we will explain the procedure for a homeomorphism in a two manifold although the same argument holds in any dimension. Let $p$ be a fixed point of $F$. We make an arbitrary small perturbation $F_1$ of $F$ in a neighborhood of $p$ to create countably many isolated fixed points. We will perturb $F_1$ in a sufficiently small neighborhood of each fixed point (all neighborhood will be disjoint). As done in \cite{yan} we can perturb $F_1$ in a box of length $b_k$ (for arbitrarily small $b_k$) around the fixed point $p_k$ such that the perturbation contains a $2l_k$-horseshoe. Since our box is sufficiently small we can compare the metric $d$ with the euclidean metric up to multiplication by a uniform positive constant (here we are strongly using the compactness of $\X$). We will explain the local picture required in the perturbation. We start with $\R^2$ endowed with the euclidean metric $d^0$ and a square $R$ of lenght $b_k$. We define a map $G_k$ as shown in Figure 1. Let $\{U_i\}_{i=1}^{2l_k}$ be vertical strips of horizontal lenght $b_k/(2l_k)$ and the regions $\{A_i\}$ contain in $G_k(R)$ as in Figure 1.  Let $B_{ij}=G_k^{-1}(A_i\cap U_j)$ for $(i,j)$ a pair of odd numbers in $[1,2l_k]$. 

\begin{center}
  \begin{tikzpicture}
    \draw (0,0) -- (5,0) -- (5,5) -- (0,5) -- (0,0);
    \draw[semithick] (-1,4.7) -- (-1,4.4);
    \draw[black] plot coordinates {(-1,4.4) (6,4.4) (6,4.1) (-1.5,4.1) (-1.5,3.2) (6,3.2) (6, 2.9)};
    \draw[black] plot coordinates {(-1,4.7) (6.5,4.7) (6.5,3.8) (-1,3.8) (-1,3.5) (6.5,3.5) (6.5,2.9)};
    \node [below] at (2.3,-0.58) {$\textmd{Figure 1\label{fig1}: Horseshoe}$};
    \draw [dashed, ultra thick] (2.4,2.8) -- (3.4,2);
       
   \draw[black] plot coordinates {(-1.5,1.9) (-1.5,1.3) (6,1.3) (6, 1) (-1, 1) };
   \draw[black] plot coordinates {(-1,1.9) (-1,1.6) (6.5,1.6) (6.5,0.7) (-1, 0.7)};
   \draw[semithick] (-1, 1)  -- (-1, 0.7);

   \filldraw[draw=black,fill=gray!20]
    plot [domain=0:5] (\x,1)
    -- plot [smooth,domain=5:0] (\x,0.7)
    -- cycle;
   
   \filldraw[draw=black,fill=gray!20]
    plot [domain=0:5] (\x,1.6)
    -- plot [smooth,domain=5:0] (\x,1.3)
    -- cycle;
   \filldraw[draw=black,fill=gray!20]
    plot [domain=0:5] (\x,4.7)
    -- plot [smooth,domain=5:0] (\x,4.4)
    -- cycle;   
   \filldraw[draw=black,fill=gray!20]
    plot [domain=0:5] (\x,4.1)
    -- plot [smooth,domain=5:0] (\x,3.8)
    -- cycle;
   \filldraw[draw=black,fill=gray!20]
    plot [domain=0:5] (\x,3.2)
    -- plot [smooth,domain=5:0] (\x,3.5)
    -- cycle;         

    \draw[semithick] (0.5,0) -- (0.5,5);
    \draw[semithick] (1,0) -- (1,5);
    \draw[semithick] (1.5,0) -- (1.5,5);
    \draw[semithick] (2,0) -- (2,5);
    \draw[semithick] (0,0) -- (0,5);

    \draw[|-,semithick] (0,-0.2) -- (1.6,-0.2);
    \draw[-|,semithick] (3.6,-0.2) -- (5,-0.2);
    \draw [dashed, ultra thick] (1.7,-0.2) -- (3.5,-0.2);
    
    \draw[semithick] (0,-0.1) -- (0,-0.3);
    \draw[semithick] (0.5,-0.1) -- (0.5,-0.3);
    \draw[semithick] (1,-0.1) -- (1,-0.3);
    \draw[semithick] (1.5,-0.1) -- (1.5,-0.3);

    \node [below] at (0.25,-0.2) {$U_1$};
    \node [below] at (0.75,-0.2) {$U_2$};
    \node [below] at (1.25,-0.2) {$U_3$};

    \node [above] at (2.7,4.3) {$A_1$};
    \node [above] at (2.7,3.7) {$A_2$};
    \node [above] at (2.7,3.1) {$A_3$};
    \node [above] at (2.7,1.15) {$A_{2l_k-1}$};
    \node [above] at (2.7,0.55) {$A_{2l_k}$};

    \end{tikzpicture}
\end{center}    

Define the cylinders $$A_{(i_0j_0),...,(i_{m-1}j_{m-1})}=\{x\in R: G_k^s x\in B_{i_sj_s} \quad \textrm{for every } s\in\{1,...,m-1\}\}.$$ It follows from the construction that each cylinder is not empty. Moreover if $x$ and $y$ are in different cylinders then $d^0_m(x,y)>\e_k$, where $\e_k=b_k/(4l_k)$. We can conclude that $S(R,d^0, \e_k)\ge \log l_k^2$. We can perturb $F_1$ nearby the points in $\{p_k\}_{}$ by a map $F_2$ that nearby $p_k$ looks like $G_k$. The same reasoning as in Lemma \ref{exam} gives us that $$\overline{mdim}(\X,d,F_2)\ge \limsup_{k\to \infty} \dfrac{\log l_k^2}{\log (4l_k/b_k)}=\limsup_{k\to \infty}\dfrac{2}{1+\frac{\log 4}{\log l_k}+\frac{\log b_k^{-1}}{\log l_k}}.$$
Observe we can assume $b_k^{-1}\ll \log _k$. Therefore we found a map $F_2$ arbitrarily close to $F$ such that $\overline{mdim}(\X,d,F_2)=2$. For a general homeomorphism we can still approximate by homeomorphisms with positive upper metric mean dimension. To see this take $p\in \X$ such that $F^k(p)$ is sufficiently close to $p$, then perturb to create a fixed point using the standard closing lemma for homeomorphism. After this approximation is done we proceed as above or as done in \cite{yan}.  The same argument can be applied to prove that any homeomorphism $F$ with a $k$-periodic point can be approximated by homeomorphisms with upper metric mean dimension at least $\frac{1}{k}\dim(\X)$.  We denote by $H(\X)$ to the space of homeomorphisms of $\X$. We remark that under the assumptions above we know $\dim_B(\X,d)=dim(\X)$. We summarize this discussion in the following result. 

\begin{proposition} Let $(\X,g)$ be a closed Riemannian manifold with induced metric $d$. The set
$$A=\{F\in H(\X): \overline{mdim}(\X,d,F)=\dim(\X)\},$$
is dense in the subspace of homeomorphisms with a fixed point. The set
$$B=\{F\in H(\X): \overline{mdim}(\X,d,F)>0\},$$
is dense in $H(\X)$. 
\end{proposition}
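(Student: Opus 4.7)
The plan is to organize the constructive sketches in the two paragraphs preceding the proposition into a formal argument; I restrict to $\dim(\X)\ge 2$, where the horseshoe-type perturbation is available.

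\emph{Density of $A$.} Fix $F\in H(\X)$ with a fixed point $p$ and $\eta>0$; the target is $F_2\in H(\X)$ with $\|F_2-F\|_{C^0}<\eta$ and $\overline{mdim}(\X,d,F_2)=\dim(\X)$. First I would perturb $F$ inside a ball of diameter less than $\eta/3$ around $p$, within a single coordinate chart, to produce a homeomorphism $F_1$ possessing a sequence of isolated fixed points $\{p_k\}_{k\ge 1}$ accumulating at $p$. Second, I would choose pairwise disjoint coordinate cubes $B_k\ni p_k$ of side $b_k$ with $b_k\to 0$, small enough that on each $B_k$ the Riemannian distance $d$ is comparable, up to a uniform multiplicative constant, to the local Euclidean distance. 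Inside each $B_k$ I would replace $F_1$ by a horseshoe-style homeomorphism modeled on Figure~1 in each of the $\dim(\X)$ coordinate directions, with parameter $l_k$, and $(b_k,l_k)$ chosen so that $l_k\to\infty$ and $b_k^{-1}\ll\log l_k$ (for instance $b_k=k^{-2}$, $l_k=k^{k}$). Because each local perturbation is supported inside $B_k$ and has displacement at most $b_k$, and the boxes are disjoint, the resulting $F_2$ satisfies $\|F_2-F\|_{C^0}<\eta$. Third, running the cylinder count of Lemma~\ref{exam} independently in each coordinate of $B_k$ yields a $(n,\e_k)$-separated subset of $B_k$ of cardinality $l_k^{n\dim(\X)}$ with $\e_k$ proportional to $b_k/l_k$, so
$$\overline{mdim}(\X,d,F_2) \ge \limsup_{k\to\infty}\dfrac{\dim(\X)\log l_k}{|\log(b_k/l_k)|} = \dim(\X).$$
The reverse inequality is Remark~\ref{remmm} combined with $\overline{dim}_B(\X,d)=\dim(\X)$.

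\emph{Density of $B$.} Let $F\in H(\X)$ and $\eta>0$. The strategy is to first produce a $C^0$-close homeomorphism with a periodic orbit and then run a local horseshoe near it. By Krylov--Bogolyubov, $F$ admits a Borel invariant probability measure $\mu$, and by Poincar\'e recurrence $\mu$-a.e.\ point is recurrent, so there exist $p\in\X$ and $k\ge 1$ with $d(F^k p,p)<\eta/3$. A routine $C^0$ perturbation inside a small ball around $F^{k-1}p$ yields $F_1\in H(\X)$ with $F_1^k p=p$ and $\|F_1-F\|_{C^0}<\eta/3$. Choose a coordinate cube $B\ni p$ small enough that $B,F_1 B,\ldots,F_1^{k-1}B$ are pairwise disjoint, and replace $F_1$ inside $F_1^{k-1}B$ by a homeomorphism whose composition with $F_1^{k-1}|_B$ realizes the Figure~1 horseshoe. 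The resulting $F_2$ satisfies $\|F_2-F\|_{C^0}<\eta$ and $\overline{mdim}(\X,d,F_2^k)\ge \dim(\X)$; the standard inequality $\overline{mdim}(\X,d,G^k)\le k\,\overline{mdim}(\X,d,G)$ (which follows by comparing $(n,\e)$-separated sets for $G^k$ with $(kn,\e)$-separated sets for $G$) then gives $\overline{mdim}(\X,d,F_2)\ge \tfrac{1}{k}\dim(\X)>0$.

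\emph{Main obstacle.} The delicate step is the assembly in the density of $A$: the countable family of local horseshoe perturbations must glue into a well-defined global homeomorphism whose $C^0$ distance to $F$ is controlled. This is handled by arranging each local perturbation to agree with $F_1$ on $\partial B_k$ and to have displacement at most $b_k\to 0$. The comparison between the Riemannian metric $d$ and the local Euclidean model on each $B_k$ only contributes a uniform multiplicative constant, which is absorbed in the limit since $|\log\e_k|\to\infty$.
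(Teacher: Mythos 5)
Your proof follows essentially the same approach as the paper: perturb near a fixed (or, for density of $B$, a nearly‑recurrent) point to install a cascade of local horseshoes on shrinking boxes $B_k$, lower‑bound $(n,\e_k)$‑separated sets via the cylinder count of Lemma~\ref{exam} run coordinatewise, compare the Riemannian and local Euclidean metrics by a uniform constant, and cap from above with Remark~\ref{remmm}; the explicit use of $\overline{mdim}(\X,d,G^k)\le k\,\overline{mdim}(\X,d,G)$ to pass from a $k$‑periodic horseshoe to $\tfrac{1}{k}\dim(\X)$ is exactly the step the paper leaves implicit. Your restriction to $\dim(\X)\ge 2$ is appropriate and in fact more careful than the paper's unqualified statement. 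One small wrinkle: the condition $b_k^{-1}\ll\log l_k$ that you state is not met by your own choice $b_k=k^{-2}$, $l_k=k^k$ (there $b_k^{-1}=k^2$ dominates $\log l_k=k\log k$); what the computation actually requires is $\log b_k^{-1}\ll\log l_k$, which that choice does satisfy, so the argument goes through unchanged.
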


\section{Final Remarks}\label{sec6}

For simplicity in previous sections we worked with the upper metric dimension. It also follows from Lemma \ref{1111} that if $\overline{mdim}(\X,d,T)<\infty$, then 
$$\underline{mdim}(\X,d,T)=\lim_{\delta\to 0}\liminf_{\e\to 0} \dfrac{\sup_\mu h_\mu(\e,T,\delta)}{|\log \e|}.$$ 
A  more symmetric formulation of the variational principle would be 
$$\overline{mdim}(\X,d,T)=\limsup_{\e\to 0} \dfrac{\sup_\delta \sup_\mu h_\mu(\e,T,\delta)}{|\log \e|},$$ 
and
$$\underline{mdim}(\X,d,T)=\liminf_{\e\to 0} \dfrac{\sup_\delta \sup_\mu h_\mu(\e,T,\delta)}{|\log \e|}.$$ 
This also follows directly from Lemma \ref{1111}. Analog statements are true for the average dynamical metric and the corresponding average metric mean dimension. 

The computation done in Theorem \ref{calculo} also shows that the metric mean dimension  depends strongly on the metric. If we have two compatible metrics $d_1$ and $d_2$ on a topological space $Y$ with different box dimension, then the metric mean dimension of the shift map $T$ on $Y^\Z$ will differ when considering the metric $(d_1)_T$ and $(d_2)_T$. 

It is an interesting question to describe hypothesis on the dynamics that ensure the existence of a measure of `maximal metric mean dimension'.  In our context this would mean that $$\overline{mdim}(X,d,T)=\lim_{\delta\to 0} \limsup_{\e\to 0} \dfrac{h_\mu(\e,T,\delta)}{|\log\e|}.$$
In the context of \cite{LT}, this would mean 
$$\overline{mdim}(X,d,T)=\limsup_{\e\to 0} \dfrac{R_\mu(\e)}{|\log\e|}.$$
As our bounds show, there is not major difference between this two perspectives (at least under Condition 1.2 we could link this two sides by using $\widetilde{h}_\mu(\e,T,\delta)$). The authors believe that if $\X$ is a manifold and $T$ preserves a OU measure  $\mu$ , i.e. $\mu$ has not atoms and it is positive on every open set, then $\mu$ would have this property (at least when choosing $T$ generic in the space of homeomorphisms that preserve $\mu$). To motivate the importance of this situation see \cite{ou}.

We define the \emph{upper metric mean dimension of a measure} $\mu$ as 
$$\overline{mdim}_\mu(\X,d,T)=\lim_{\delta\to 0}\limsup_{\e\to 0}\frac{h_\mu(\e,T,\delta)}{|\log\e|}.$$
As before, we could also use the analog formulation in the context of \cite{LT}. We remark that this quantity depends on the metric, it is not a purely measure theoretic invariant. As remarked in the introduction of \cite{LT}, it is not possible to define a meaningful invariant that only depends on measure theoretic information. Another natural question would be if the variational principle holds for this definition, in other words, if it is true that $\overline{mdim}(\X,d,T)=\sup_\mu \overline{mdim}_\mu(\X,d,T)$.





\end{document}